\newcommand{\N}{\mathbb N}
\newcommand{\Z}{\mathbb Z}
\newcommand{\R}{\mathbb R}
\newcommand{\eps}{\varepsilon}
\renewcommand{\P}{\mathbb P}
\newcommand{\E}{\mathbb E}
\newcommand{\1}{\mathbbm 1}
\newcommand{\0}{\mathbf 0}
\newcommand{\F}{\mathcal F}
\newcommand{\G}{\mathcal G}
\newtheorem{theorem}{Theorem}[section]
\newtheorem{lemma}[theorem]{Lemma}
\newtheorem{corollary}[theorem]{Corollary}
\newtheorem{thmx}{Theorem}
\newtheorem{remark}[theorem]{Remark}
\theoremstyle{definition}
\newcommand{\isDistr}{\overset d=}
\newcommand{\dd}{\text{d}}
\newcommand{\cev}[1]{\accentset{\leftarrow}{#1}}
\newcommand{\p}{{p^*}}
\newcommand{\f}{\mathfrak{f}}
\newcounter{constants}
\newcommand{\PSRW}{P^{\text{SRW}}}
\newcommand{\llambda}{{\boldsymbol{\lambda}}}
\newcommand{\EE}{\mathcal E}
\newcommand{\M}{\mathcal M}
\newcommand{\m}{\boldsymbol{m}}
\newcommand{\pp}{p^*_{\operatorname{pinn.}}}
\renewcommand{\t}{\boldsymbol{t}}
\title{Local limit theorem for directed polymers beyond the $L^2$-phase}
\author{Stefan Junk}
\address{Gakushuin University, 1--5--1 Mejiro, Toshima-ku, Tokyo 171-8588 Japan}
\email{sjunk@math.gakushuin.ac.jp}
\date{\today}
\begin{document}
\lineskip=0pt

%------
% Insert your abstract.
%------
\begin{abstract}
We consider the directed polymer model in the weak disorder phase under the assumption that the partition function is $L^p$-bounded for some $p>1+\frac{2}d$. We prove that the point-to-point partition function can be approximated by two point-to-plane partition functions at the startpoint and endpoint, and in particular that it is $L^p$-bounded as well. Some consequences of this result are also discussed, the most important of which is a local limit theorem for the polymer measure. We furthermore show that the required $L^p$-boundedness holds for some range of $\beta$ beyond the $L^2$-critical point, and in the whole interior of the weak disorder phase for environments with finite support.
\end{abstract}

\maketitle

\section{Introduction}

\subsection{Overview}\label{sec:intro}
The directed polymer model describes random paths in a disordered medium. The model has recently attracted much interest because it is conjectured to be in the KPZ (Kardar-Parisi-Zhang) universality class. In the so-called \emph{strong disorder} regime, in particular in spatial dimension $d=1$, it is expected that the polymer has a super-diffusive scaling exponent and that its behavior is thus completely different from its infinite-temperature version (the usual simple random walk). At present, this has only been verified in a small number of exactly solvable models.

\smallskip In contrast to the one-dimension case, in spatial dimensions $d\geq 3$ it is known that the diffusive scaling of the simple random walk persists up to some inverse temperature $\beta_{cr}>0$. This parameter regime is known as the \emph{weak disorder} phase and it is the focus of the current article. It is characterized as the set of $\beta$ such that the (normalized) partition function $W_n^\beta$ converges to a positive limit $W_\infty^\beta$. The long-term behavior in the weak disorder phase is much better understood than in the strong disorder phase, see for example \cite{B89, AS96, CY06}, but many important questions still remain.

\smallskip We are particularly interested in the case where $\beta$ is close to $\beta_{cr}$, which is an intriguing regime because strong disorder holds beyond $\beta_{cr}$ (it has recently been proved that $\beta_{cr}$ itself belongs to the weak disorder phase \cite{JL24,JL25_2}). However, this regime is technically difficult because a successful approach to the weak disorder phase, going back to \cite{B89} and based on $L^2$-martingale techniques, is not applicable in the full weak disorder regime but only up to some $\beta_{cr}^{L^2}$, which is known to be strictly smaller than $\beta_{cr}$. The contribution of this paper is to introduce an approach based on $L^p$-estimates that is valid beyond $\beta_{cr}^{L^2}$, and up to $\beta_{cr}$ for a certain class of environments.

\smallskip Our main result is a moment bound for the point-to-point partition $W_{0,n}^{\beta,0,x}$ between space-time points $(0,0)$ and $(n,x)$, uniformly in $n\in\N$ and in a large range of $x$, which in particular implies a local limit theorem for the polymer measure. Informally, the latter result says that the density of the polymer measure $\mu_{\omega,n}^\beta$ (the quenched law of the polymer up to time $n$) is comparable to the density of the simple random walk with a random multiplicative constant that is well-behaved. More precisely, one can write
\begin{align*}
\mu_{\omega,n}^\beta(X_n=x)=\frac{W^{\beta,0,x}_{0,n}}{W^\beta_n}\PSRW(X_n=x),
\end{align*}
and our main contribution is to show that the numerator is bounded away from zero and infinity and factorizes as $W_{0,n}^{\beta,0,x}\approx W_\infty^\beta \cev W_{-\infty}^{\beta,n,x}$ where $\cev W^{\beta,n,x}_{-\infty}$ denotes the ``backward'' partition function started from space-time point $(n,x)$. Since the denominator converges to $W_\infty^\beta>0$ in weak disorder, we obtain a local limit theorem of the form
\begin{align}\label{eq:informal}
\mu_{\omega,n}^\beta(X_n=x)\approx \cev W_{-\infty}^{\beta,n,x}\PSRW(X_n=x).
\end{align}
See Corollary~\ref{cor:informal} for a precise statement.

\smallskip The most difficult step in this outline is the moment bound for $W_{0,n}^{\beta,0,x}$, which proves that the contribution from the environment in the bulk (i.e., from sites $(t,x)$ with $0\ll t\ll n$) is negligible and justifies the factorization of the numerator used in \eqref{eq:informal}. Note that the integrability of $W_n^\beta$ in weak disorder is by now quite well-understood, but moment bounds for $W_{0,n}^{\beta,0,x}$ are conceptually more difficult. In particular, it is a priori not clear that $(W_{0,n}^{\beta,0,0})_{n\in 2\N}$ is uniformly integrable in weak disorder. The reason is that it is not a martingale, unlike $(W_n^\beta)_{n\in\N}$, and one expects to loose some integrability from fixing the polymer endpoint. Our main contribution is to quantify that loss using a novel form of ``chaos decomposition'', see Section~\ref{sec:strat}.

\smallskip Similar results are known in the $L^2$-weak disorder phase $\beta<\beta_{cr}^{L^2}$, see \cite{S95,V06, CCN22}, and the extension to the full weak disorder phase has been an important open problem in the field. For example, in \cite[Remark 5.1]{C17} it is noted that ``the validity of local limit theorem is a natural definition for the polymer to be in the weak disorder regime (better than the central limit theorem itself)''. We also note that, in spatial dimension $d=2$, strong disorder holds at all $\beta>0$ but one can define an \emph{intermediate weak disorder} regime by choosing a time-dependent inverse temperature $\beta_n$ that goes to zero at a suitable rate. In that case, a local limit theorem is known in the full weak disorder regime, see \cite{G22, NN21}. This is due to the fact that, in contrast to the higher-dimensional regime considered here, $L^2$-techniques are applicable in the whole intermediate weak disorder regime.

\smallskip Limit theorems of similar forms have been obtained in the literature on random media many times in different contexts, see for example \cite{BCR16, SS04, BB07, BBDS23, BD10}, and have proven a valuable tool in situations where, in some sense, the effect of disorder is weak and the model homogenizes. In our setup, \eqref{eq:informal} allows a more precise comparison between the simple random walk and the polymer measure than what is provided by, for example, the invariance principle. The reason is that  to compute finer properties of $\mu_{\omega,n}^\beta(X_n\in\cdot)$ it is enough to understand the quality of the approximation in \eqref{eq:informal} and the statistics of $\cev W^{\beta,n,x}_\infty$ over an appropriate range of $x$. We demonstrate this approach in Section~\ref{sec:appl} by deriving a number of new results for the weak disorder phase.

\smallskip Let us finally comment on the range of validity of our results. We prove the moment bound for the point-to-point partition function (Theorem~\ref{thm:local}) and its consequences, including the precise version of the local limit theorem \eqref{eq:informal} in Corollary~\ref{cor:informal}, under the assumption that $(W_n^\beta)_{n\in\N}$ is $L^p$-bounded, for some $p>1+2/d$. We further show in Corollary~\ref{cor:nontriv} that, for general environments satisfying only the usual assumption \eqref{eq:exp_mom}, this condition is valid in an interval $[0,\beta_{cr}^{L^{1+2/d}})$ that is strictly larger than $[0,\beta_{cr}^{L^2})$. In particular, our result extends previously known versions of the local limit theorem in the $L^2$-phase. We strongly believe that $\beta_{cr}^{L^{1+2/d}}$ equals $\beta_{cr}$, i.e., that Theorem~\ref{thm:local} and its consequences are valid throughout the interior of the weak disorder phase, but we can only confirm this under a technical assumption (see Corollary~\ref{cor:fin}). We hope to remove this restriction in future work and conjecture that $\beta_{cr}^{L^{1+2/d}}=\beta_{cr}$ for general environments.

\subsection{Definition of the model}\label{sec:def}

We now introduce the model in detail. The random medium (also called \emph{disorder} or  \emph{random environment}) is given by an independent and identically distributed (i.i.d.) collection of real-valued weights $(\omega=(\omega_{t,x})_{(t,x)\in\N\times \Z^d},\P,\F)$ with finite exponential moments,
\begin{align}\label{eq:exp_mom}
	\E\big[e^{\beta|\omega_{0,0}|}\big]<\infty\qquad\text{ for all }\beta\geq 0.
\end{align}
Assumption \eqref{eq:exp_mom} will be in place throughout the paper. The weight of a path $\pi$ in $\omega$ is given by
\begin{align}\label{eq:HI}
	H_I(\omega,\pi)\coloneqq \sum_{i\in I\cap \N}\omega_{i,\pi(i)},
\end{align}
with $H_n\coloneqq H_{[1,n]}$, and the polymer measure $\mu_{\omega,n}^\beta$ is defined to be the associated Gibbs-measure,
\begin{align}\label{eq:mu}
	\mu_{\omega,n}^\beta(\dd X)\coloneqq\frac{1}{W_n^\beta}e^{\beta H_n(\omega,X)-n\lambda(\beta)}P(\dd X),
\end{align}
where $\lambda(\beta)\coloneqq \log \E[e^{\beta\omega_{0,0}}]$ is the logarithmic moment generating function, $(X=(X_n)_{n\in\N},P)$ is the simple random walk on $\Z^d$ and $W_n^\beta$ is the normalizing constant,
\begin{align*}
	W_n^\beta=E[e^{\beta H_n(\omega,X)-n\lambda(\beta)}].
\end{align*}
We will refer to this quantity as the partition function. It is not difficult to check that $(W_n^\beta)_{n\in\N}$ is a non-negative martingale and hence the almost sure limit $W_\infty^\beta\coloneqq \lim_{n\to\infty}W_n^\beta$ exists. One can further show that a zero-one law holds, i.e. $\P(W_\infty^\beta>0)\in\{0,1\}$. The formal definition of the weak disorder regime mentioned above is as the set of $\beta\geq 0$ where
\begin{align}
	\P(W_\infty^\beta>0)=1\tag{WD}\label{eq:WD}.
\end{align}
The strong disorder phase is the complementary set of $\beta$, satisfying
\begin{align}\label{eq:SD}\tag{SD}
\P(W_\infty^\beta=0)=1.
\end{align}

We now highlight some important results and refer to the survey articles \cite{C17, CSY04, D09, Z24} for further details. The next theorem shows that there is a phase transition between weak and strong disorder.

\begin{thmx}\label{thmx:A}
	\begin{enumerate}
		\item[(i)]  There exists $\beta_{cr}=\beta_{cr}(d)\in[0,\infty]$ such that \eqref{eq:WD} holds for $\beta<\beta_{cr}$ and \eqref{eq:SD} holds for $\beta>\beta_{cr}$. If $\beta_{cr}<\infty$, then \eqref{eq:WD} also holds at $\beta_{cr}$.
		\item[(ii)] We have $\beta_{cr}(d)=0$ for $d<3$ and $\beta_{cr}(d) \in(0,\infty]$ for $d\geq 3$. Moreover, for $d\geq 3$ there exists $\beta_{cr}^{L^2}=\beta_{cr}^{L^2}(d)\ \in (0,\infty]$ such that $(W_n^\beta)_{n\in\N}$ is $L^2$-bounded if and only if $\beta<\beta_{cr}^{L^2}$.
		\item [(iii)] For $d\geq 3$, if $\beta_{cr}^{L^2}<\infty$ then $\beta_{cr}^{L^2}<\beta_{cr}$.
	\end{enumerate}
\end{thmx}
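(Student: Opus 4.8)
The three assertions are by now classical, and the plan is to build everything around the non-negative martingale $(W_n^\beta)_n$ and the second-moment method.

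\emph{Part (i).} Since $(W_n^\beta)_n$ is a non-negative martingale it converges almost surely to some $W_\infty^\beta\ge 0$, so the content of (i) is the zero--one law and the interval structure. For the former I would observe that replacing finitely many weights $\omega_{t,x}$ by other values multiplies $W_\infty^\beta$ by a random factor that is almost surely positive and finite (it is controlled, via \eqref{eq:HI}, by the weights that are changed); hence $\{W_\infty^\beta>0\}$ does not depend on any finite sub-collection of the i.i.d.\ family $\omega$ and is therefore trivial by Kolmogorov's zero--one law, which is the dichotomy \eqref{eq:WD}/\eqref{eq:SD}. It then remains to show that the set of $\beta$ for which \eqref{eq:WD} holds is an interval; it contains $0$ because $W_n^0\equiv 1$, and I would prove that \eqref{eq:WD} at a value $\beta_1$ forces \eqref{eq:WD} at every $\beta_0\in[0,\beta_1]$, and then take $\beta_{cr}$ to be the supremum of such $\beta$. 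For this monotonicity I would either construct a direct coupling comparing the partition functions at the two temperatures, or --- and this is the route I find cleaner --- use the size-biased (``spine'') reformulation of \eqref{eq:WD} as a non-degeneracy statement for the environment seen along an independent walk; this step, rather than the rest of (i), is where I expect to have to be careful. Finally $\beta_{cr}<\infty$: at sufficiently large $\beta$ a fractional-moment estimate together with concentration forces $\tfrac1n\log W_n^\beta$ to a strictly negative limit, which is incompatible with \eqref{eq:WD}.

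\emph{Part (ii).} For $d\ge 3$ I would run the classical second-moment computation of \cite{B89}: writing $X,X'$ for two independent simple random walks,
\[
  \E\big[(W_n^\beta)^2\big]=E^{\otimes 2}\Big[\exp\Big((\lambda(2\beta)-2\lambda(\beta))\textstyle\sum_{i=1}^{n}\ind[X_i=X_i']\Big)\Big].
\]
In $d\ge 3$ the difference walk is transient, so $\sum_{i\ge 1}\ind[X_i=X_i']$ is almost surely finite with geometric tails, and the right-hand side is bounded uniformly in $n$ precisely when $\lambda(2\beta)-2\lambda(\beta)<\log(1/\pi_d)$, where $\pi_d<1$ is the return probability of the difference walk. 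Since $\beta\mapsto\lambda(2\beta)-2\lambda(\beta)$ is continuous, non-decreasing (by convexity of $\lambda$) and vanishes at $0$, the set of $\beta$ for which $(W_n^\beta)_n$ is $L^2$-bounded is an interval $[0,\beta_{cr}^{L^2})$ with $\beta_{cr}^{L^2}>0$; $L^2$-boundedness implies uniform integrability, hence $\E[W_\infty^\beta]=1$ and \eqref{eq:WD}, so $\beta_{cr}^{L^2}\le\beta_{cr}$ and in particular $\beta_{cr}>0$. For $d\le 2$ I would prove $\beta_{cr}=0$ from recurrence, using the characterisation (due to Comets, Shiga and Yoshida) that $\{W_\infty^\beta=0\}$ agrees almost surely with the divergence of the replica-overlap series $\sum_n\mu_{\omega,n}^\beta\otimes\mu_{\omega,n}^\beta(X_n=X_n')$: at $\beta=0$ this divergence is just $\sum_n P^{\otimes 2}(X_n=X_n')=\infty$, i.e.\ recurrence, and the point --- which is genuine work in low dimension, carried out in the cited literature --- is that the divergence persists for every $\beta>0$.

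\emph{Part (iii).} The remaining task, and the one where I expect the real difficulty to lie, is to exhibit weak disorder strictly beyond the $L^2$-threshold. The plan is to run a \emph{conditional} (or truncated) second-moment argument: before squaring, replace $W_n^\beta$ by its restriction to a high-probability event on which the environment and the partition function are ``regular'', so that the rare contributions responsible for the $L^2$-blow-up are excised, and then upgrade the resulting non-degeneracy to \eqref{eq:WD}. An equivalent route is Birkner's size-biasing criterion for \eqref{eq:WD}, which is provably weaker than $L^2$-boundedness; either way, this strict separation --- together with the monotonicity step in (i) --- is where the work concentrates, while the existence of the transition points themselves and the $L^2$-half of (ii) are comparatively routine.
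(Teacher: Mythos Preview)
Your sketch is correct and matches the standard arguments in the references the paper cites (CSY03 for (i), B89 for the $L^2$ part of (ii), and B04 and subsequent works for (iii)); note, however, that the paper does not actually prove Theorem~A but simply records it as a background result with citations, so there is no ``paper's own proof'' to compare against beyond those references. Your outline of the zero--one law, the monotonicity via the spine/size-biased reformulation, the second-moment computation, and Birkner's criterion for (iii) is exactly the route taken in that literature.
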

\begin{proof}
The existence of the phase transition is shown in \cite{CSY03}. The martingale approach and the $L^2$-phase was introduced in \cite{B89}. The fact that $\beta_{cr}^{L^2}$ is strictly smaller than  $\beta_{cr}$ has been shown over a number of works, starting with \cite{B04}, see \cite[Remark 5.2]{C17} for the precise references. If the environment is upper bounded,
		\begin{align}\label{eq:bd}\tag{u-bd.}
		\exists K>0\text{ s.t. }\P(\omega_{0,0}\leq K)=0,
		\end{align}
		then the fact that \eqref{eq:WD} holds at $\beta_{cr}$ has been proved in \cite{JL24}, and the assumption \eqref{eq:bd} was later removed in \cite{JL25_2}.
\end{proof}
Next, we recall some information about the behavior of $\mu_{\omega,n}^\beta$ within each phase. An important observable is the so-called replica overlap
\begin{align}\label{eq:def_I}
	I_n^{\beta,2}\coloneqq \sum_{x\in\Z^d}\mu_{\omega,n}^\beta(X_{n+1}=x)^2=\mu^{\otimes 2, \beta}_{\omega,n}(X_{n+1}=X_{n+1}'),
\end{align}
where $\mu^{\otimes 2,\beta}_{\omega,n}=\mu_{\omega,n}^\beta\otimes\mu_{\omega,n}^\beta$ denotes the law of two independent polymers $(X_n)_{n\in\N}$ and $(X_n')_{n\in\N}$ in the same environment $\omega$.

\begin{thmx}\label{thmx:B}
\begin{enumerate}
	\item[(i)] If \eqref{eq:WD} holds, then $\sum_nI_n^{\beta,2}$ is almost surely finite and, in particular, $\lim_{n\to\infty}\max_x\mu_{\omega,n}(X_{n+1}=x)=0$  almost surely. Moreover, for every $f\colon\R^d\to\R$ bounded and continuous
		\begin{align}\label{eq:CLT}
			\sum_{x\in\Z^d}f(x/\sqrt n)\mu_{\omega,n}^\beta(X_n=x)\xrightarrow{n\to\infty}\int f(x)k(x)\dd x\qquad\text{ in probability},
		\end{align}
	 where $k$ denotes the normal density with expectation zero and covariance matrix $\frac 1dI$.
 \item[(ii)] If \eqref{eq:SD} holds, then there exists $c>0$ such that $\liminf_{n\to\infty}\max_{x\in\Z^d}\mu_{\omega,n}^\beta(X_{n+1}=x)\geq c$ almost surely.
\end{enumerate}
\end{thmx}
\begin{proof}
	The central limit theorem \eqref{eq:CLT} is first proved in the $L^2$-phase \cite[Theorem~1.2]{AS96} and later extended to the whole weak disorder phase \cite[Theorem~1.2]{CY06}. The characterization of the phase transition in terms of the finiteness of $\sum_n I_n^{\beta,2}$ is proved in \cite[Theorem~2.1]{CSY03} and part (ii) is proved in \cite[Theorem~1.4.2]{Y10} for a setting that generalizes our model.
\end{proof}

As mentioned earlier, our main result relies on $L^p$-boundedness instead of $L^2$-boundedness, and we thus introduce the critical exponent at  a given inverse temperature $\beta$,
\begin{align}\label{eq:def_p}
	\p(\beta)\coloneqq \sup\big\{p\colon (W_n^\beta)_{n\in\N}\text{ is $L^p$-bounded}\big\}.
\end{align}
We will sometimes write $\p$ instead of $\p(\beta)$ to simplify the notation. A priori, it is not clear that \eqref{eq:WD} guarantees $\p(\beta)>1$ and an argument is needed to ensure this is the case. Somewhat surprisingly, it turns out that $\p(\beta)$ is even bounded away from $1$ throughout the weak disorder phase.
\begin{thmx}\label{thmx:p}
% 	Assume \eqref{eq:WD}.
	\begin{enumerate}
		\item[(i)]In \eqref{eq:WD}, it holds that $\E[\sup_n W_n^\beta]<\infty$ and, in particular, $(W_n^\beta)_{n\in\N}$ is uniformly integrable.
		\item [(ii)] In \eqref{eq:WD}, it holds that $\p(\beta)\geq 1+\frac{2}{d}$.
		\item [(iii)] If \eqref{eq:WD} holds at $\beta$, then $\beta\mapsto\p(\beta)$ is left-continuous at $\beta$.
		\item [(iv)] In $d\geq 3$, if $\beta_{cr}^{L^2}<\infty$ then $\p(\beta_{cr}^{L^2})=2$.
	\end{enumerate}
\end{thmx}

\begin{proof}
	\textbf{Part (i)}: The uniform integrability was shown in \cite[Proposition~3.1]{CY06} and the integrability of $\sup_nW_m^\beta$  is proved in \cite[Theorem~1.1]{J21_1}.

	\smallskip \textbf{Part (ii)}: The fact that $\p>1$ was first proved under the assumption \eqref{eq:bd} in \cite[Theorem~1.1]{J21_1}, which was then extended a large class of unbounded environments in \cite{FJ23}. The lower bound $\p\geq 1+\frac 2d$ was proved in \cite[Corollary~1.3]{J25} under the assumption \eqref{eq:bd}, which was later removed in \cite[Corollary 2.7(ii)]{JL24_2}.

	\smallskip \textbf{Part (iii)}: The left-continuity is proved in \cite[Corollary 2.9(i)]{JL24_2}.

	\smallskip \textbf{Part (iv)} follows by a comparison to the so-called inhomogeneous pinning model, see the discussion in \cite[Section~1.4]{BS10} and \cite{BT10} for the case $d=3$. Alternatively, one can note that by Theorem~\ref{thmx:B}(iii), part (ii) and the definitions of $\p$ and $\beta_{cr}^{L^2}$, we have $1<\p(\beta_{cr}^{L^2})\leq 2$ and $\p(\beta)\geq 2$ for $\beta<\beta_{cr}^{L^2}$, hence $\p(\beta_{cr}^{L^2})=2$ follows by left-continuity.
\end{proof}

\subsection{Statement of the key moment bound}\label{sec:local}

The core of this paper is a moment bound for the point-to-point partition function, which is presented in this section. While this result is technical, the bound is quite powerful and has many useful consequences, including the local limit theorem for the polymer endpoint (see Section~\ref{sec:alt}).

\smallskip To state it, we introduce some notation. First, for $m\leq n$ and $x,y\in\Z^d$, we write $(m,x)\leftrightarrow(n,y)$ if $P(X_{n-m}=x-y)>0$. If $(m,x)\leftrightarrow(n,y)$, then the random walk bridge between $(m,x)$ and $(n,y)$ is denoted by
\begin{align*}
P^{x,y}_{m,n}(\cdot)=P(X\in\cdot|X_m=x,X_n=y),
\end{align*}
with expectation $E^{x,y}_{m,n}$, and the corresponding pinned partition function is defined as
\begin{align*}
	W^{\beta,x,y}_{m,n}\coloneqq E^{x,y}_{m,n}[e^{\beta H_{(m,n)}(\omega,X)-(n-m-1)\lambda(\beta)}].
\end{align*}
Note that the environment at $(n,x)$ and $(m,x)$ is ignored. In the calculations, we have to be careful about whether the environment at the initial time and terminal times is included, but we stress that due to \eqref{eq:exp_mom} this difference essentially does not matter. If $\nu$ is a probability measure on $\Z^{2d}$ such that
\begin{align*}
	\nu(\{(x,y)\colon (m,x)\leftrightarrow(n,y)\})=1,
\end{align*}
then we define the averaged pinned partition function by
\begin{align*}
	W^{\beta,\nu}_{m,n}\coloneqq \sum_{x,y}\nu(x,y) W^{\beta,x,y}_{m,n}.
\end{align*}
We also introduce the random walk $P^{x,\star}_{m,n}$ (resp. $P^{\star,y}_{m,n}$) with starting point $(m,x)$ (endpoint $(n,y)$) and free endpoint (starting point). Namely, $P^{x,\star}_{m,n}$ is the law of $(X_k-X_m+x)_{k=m,\dots,n}$ and $P^{\star,y}_{m,n}$ is the law of $(X_k-X_n+y)_{k=m,\dots,n}$ under $P$. The corresponding partition functions are denoted by
\begin{align}
	W^{\beta,x,\star}_{m,n}&\coloneqq E^{x,\star}_{m,n}[e^{\beta H_{(m,n]}(\omega,X)-(n-m)\lambda(\beta)}],\label{eq:forward}\\
	W^{\beta,\star,y}_{m,n}&\coloneqq E^{\star,y}_{m,n}[e^{\beta H_{[m,n)}(\omega,X)-(n-m)\lambda(\beta)}].\label{eq:backward}
\end{align}
In \eqref{eq:backward}, the environment at time $n$ is ignored, which together with the reversibility of the simple random walk ensures that \eqref{eq:forward} and \eqref{eq:backward} have the same law.

\smallskip Finally, we introduce a set of admissible distributions for two endpoints that appear in Theorem~\ref{thm:local}, which are separated by a distance $O(n)$ in space and we allow fluctuations of order $O(n^{1/2})$ around the two endpoints. Let
\begin{equation}\label{eq:def_EE}
	\begin{split}
	&\EE_n(\alpha,M)\\&\coloneqq \left\{\nu\in\M_1(\Z^{2d})\colon \begin{matrix}\exists a,b\in\Z^d\colon |a-b|\leq \alpha n, \nu\big((a,b)+[Mn^{1/2},Mn^{1/2}]^{2d}\big)=1,\\[1mm]\nu(\{(x,y)\colon (0,x)\leftrightarrow(n,y)\})=1\end{matrix}\right\},
	\end{split}
\end{equation}
where $\mathcal M_1(\Z^{2d})$ denotes the set of probability measures on $\Z^{2d}$. We now state the main result.
\begin{theorem}\label{thm:local}
Assume $d\geq 3$ and $\p(\beta)>1+2/d$ and let $p\in(1+2/d,\p\wedge 2)$. There exists $\alpha>0$ such that the following hold:
\begin{enumerate}
	\item[(i)] It holds that
\begin{align}
\sup_{n\in\N}\sup_{|x-y|\leq \alpha n\colon (0,x)\leftrightarrow(n,y)}\E\left[\big(W^{\beta,x,y}_{0,n}\big)^p\right]&<\infty\label{eq:local3}.
\end{align}
\item[(ii)]Let
\begin{align}\label{eq:xi}
	\xi=\xi(\p,p)=\begin{cases}\frac{\p-1}{\p+1}(1-\frac{1+2/d}{\p})&\text{ if }p\in(1+\frac2d,1+\frac{\p-1}{\p+1}),\\
	\frac{\p-p}{\p}(1-\frac{1+2/d}{\p})&\text{ if }p\in(1+\frac{\p-1}{\p+1},\p).\end{cases}
\end{align}

For every $\eps\in(0,\xi)$ and $M>0$ there exists $C>0$ such that
\begin{align}
	\sup_{n\in\N}\sup_{\nu\in\EE_n(\alpha,M)}\big\|W^{\beta,\nu}_{0,n}-1\big\|_p&\leq C\Big(\max_x \nu(x,\star)^{\xi-\eps}+\max_y\nu(\star,y)^{\xi-\eps}\Big),\label{eq:local1}
\end{align}
where $\nu(x,\star)=\sum_y\nu(x,y)$ and $\nu(\star,y)=\sum_x\nu(x,y)$ denote the marginals of $\nu$.
\end{enumerate}
\end{theorem}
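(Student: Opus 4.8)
\emph{Overall strategy.} I would prove (i) and (ii) together by induction on $n$: the bound \eqref{eq:local3} at all scales $<n$ is the input needed to run the increment estimates that produce both (i) and (ii) at scale $n$. The base case is covered by the exponential moments \eqref{eq:exp_mom}, and the induction is closed by peeling off a \emph{fixed} number $n_0$ of environment layers near the two endpoints: the partition function conditioned on only those layers has $p$-th moment at most $e^{2n_0(\lambda(p\beta)-p\lambda(\beta))}$ uniformly in $n$ (by Jensen's inequality over the path, together with the product structure over the two ends), while the complementary fluctuation is made as small as needed by taking $n_0$ large, using the summable increment bound below. The constant $\alpha$ is fixed small enough that for $|x-y|\le\alpha n$ the local central limit theorem gives $P(X_n=y-x)\ge c\,n^{-d/2}$, so that bridge weights are comparable to free random-walk weights, and that a random-walk large-deviation estimate beats the crude bound $\E[(W^{\beta,x,z}_{0,m})^p]\le e^{m(\lambda(p\beta)-p\lambda(\beta))}$ on the ``ballistic'' terms $|x-z|>\alpha m$, making their total contribution summable and of lower order; the assumption $p<2$ is what leaves room for such an $\alpha$, and $p<2$ is also used through the von Bahr--Esseen inequality.

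\emph{Martingale decomposition.} For the upper bound I peel the environment symmetrically from both endpoints. Let $\G_k$ be the $\sigma$-algebra generated by the environment layers at times $\{1,\dots,k\}\cup\{n-k,\dots,n-1\}$; then $M_k:=\E[W^{\beta,\nu}_{0,n}\mid\G_k]$ is a martingale with $M_0\equiv1$ and $M_{\lfloor n/2\rfloor}=W^{\beta,\nu}_{0,n}$, and a short computation shows that $M_k-M_{k-1}$ equals $\sum_z g^{(k)}(z)\,a^-_k(z)+\sum_z g^{(n-k)}(z)\,a^+_k(z)$ plus a cross term, where $g^{(j)}(z)=e^{\beta\omega_{j,z}-\lambda(\beta)}-1$ is centered and $a^-_k(z),a^+_k(z)\ge 0$ are $\G_{k-1}$-measurable --- the averaged pinned partition function with the path further required to pass through $z$ at time $k$, respectively $n-k$. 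The von Bahr--Esseen inequality, applied first in the independent $z$-sums and then in the time index, reduces \eqref{eq:local1} to the bounds $\sum_{k=1}^{\lfloor n/2\rfloor}\sum_z\E[a^-_k(z)^p]\le C\,(\max_x\nu(x,\star))^{\xi-\eps}$ and its mirror image for $a^+_k$ and $\max_y\nu(\star,y)$; the cross term is estimated in the same way and is of lower order.

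\emph{Increment estimate.} For $k\le n/2$ and $|x-y|\le\alpha n$ the bridge weights satisfy $P^{x,y}_{0,n}(X_k=z)\le C\,P(X_k=z-x)$, so after absorbing the (bounded) contribution of the layers near the far endpoint, $a^-_k(z)$ is controlled by $\sum_x\nu(x,\star)P(X_k=z-x)\,W^{\beta,x,z}_{0,k}$. Writing $a^-_k(z)=\mathcal W_k\,\mu_k(z)$ with $\mathcal W_k:=\sum_z a^-_k(z)$ --- which is $L^q$-bounded for every $q<\p$ by convexity from $W^\beta_k$ --- and $\mu_k$ the associated quenched endpoint law, one has $\sum_z a^-_k(z)^p=\mathcal W_k^p\sum_z\mu_k(z)^p\le\mathcal W_k^p\,I_k^{p-1}$ where $I_k=\sum_z\mu_k(z)^2$ is a replica overlap, and Hölder's inequality separates a high moment of $\mathcal W_k$ from a fractional moment of $I_k$. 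The latter is bounded by combining $I_k\le\max_z\mu_k(z)$, the on-diagonal local limit bound $\max_zP(X_k=z)\le Ck^{-d/2}$, the $L^\p$-tails of the point-to-plane and --- by the inductive hypothesis --- point-to-point partition functions, negative moments of $W^\beta_k$ in weak disorder, and the deterministic fact that the random-walk evolution of the first marginal has sup-norm at most $\min\{\max_x\nu(x,\star),\,Ck^{-d/2}\}$. Optimizing the Hölder exponent subject to the moment constraint $q<\p$ --- the two regimes of \eqref{eq:xi} arising according to whether the resulting exponent exceeds $\p$ --- produces a per-increment bound $\sum_z\E[a^-_k(z)^p]\le C\big(\min\{\max_x\nu(x,\star),\,k^{-d/2}\}\big)^{\xi+2/d-\eps}$, and summing over $k$ with the crossover at $k\simeq(\max_x\nu(x,\star))^{-2/d}$ --- the tail of the sum converging precisely because $p>1+2/d$ forces $\tfrac d2(\xi+2/d)>1$ --- gives the claimed exponent, the loss $\eps$ absorbing the slack in the Hölder exponents and the large-deviation estimates. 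Taking finally $\nu=\delta_{(x,y)}$ with $|x-y|\le\alpha n$ makes both marginals equal to $1$, so \eqref{eq:local1} reads $\E[|W^{\beta,x,y}_{0,n}-1|^p]\le C$, which is \eqref{eq:local3} and closes the induction.

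\emph{Main obstacle.} The heart of the argument is the fractional-moment control of the replica overlap $I_k$ --- equivalently, of the maximal polymer density at time $k$ --- using \emph{only} the available $L^\p$-boundedness, in the absence of the $L^2$-martingale tools that work for $\beta<\beta_{cr}^{L^2}$; the exponent $\xi$ in \eqref{eq:xi} is precisely the output of this interpolation. A secondary difficulty is to keep the ballistic large-deviation contributions summable by a suitable choice of $\alpha$ and to set up the induction so that the constants close.
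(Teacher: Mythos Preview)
Your approach is genuinely different from the paper's, and as written it contains a concrete error that breaks the argument.

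\textbf{The error.} You assert that for $\alpha>0$ small and $|x-y|\le\alpha n$ the local CLT gives $P(X_n=y-x)\ge c\,n^{-d/2}$, and hence that the bridge weight satisfies $P^{x,y}_{0,n}(X_k=z)\le C\,P(X_k=z-x)$. Both claims are false for any fixed $\alpha>0$: when $|y-x|$ is of order $n$ the probability $P(X_n=y-x)$ is exponentially small in $n$, and the time-$k$ marginal of the bridge is centered near $x+\tfrac kn(y-x)$, not near $x$, so for $z$ in that region the bridge weight exceeds the undrifted free weight by an exponential factor. The correct comparison (this is Lemma~\ref{lem:lclt}(i)) is with the \emph{tilted} walk $P^{\llambda}$ where $\m(\llambda)=(y-x)/n$. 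But once the tilt is in place, the factored pieces become drifted partition functions $W^{\beta,\llambda}$, and your claim that $\mathcal W_k$ is $L^q$-bounded for every $q<\p$ ``by convexity from $W^\beta_k$'' no longer follows: the inequality $\p(\beta,\llambda)\ge\p(\beta)$ for small $\llambda\neq\0$ is exactly Corollary~\ref{cor:drift}, which the paper derives as a \emph{consequence} of the theorem you are proving. The paper avoids this circularity through its two-replica chaos decomposition (Lemma~\ref{lem:chaos}), which delivers the drifted $L^p$-bound~\eqref{eq:KK} directly; your sketch offers no substitute, and the induction on $n$ does not help because the ``right factor'' is a free (drifted) object, not a pinned one at a smaller scale.

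\textbf{What the paper does instead.} There is no one-replica martingale or von Bahr--Esseen step. The paper writes $(W^{\beta,x,y}_{0,n})^p=(W^{\otimes 2,\beta,x,y}_{0,n})^{p/2}$ and decomposes the joint partition function according to the number $K_n$ of collisions between the two replicas that are separated in time by at least a fixed $T$; the choice of $T$ (Lemma~\ref{lem:core2}) ensures that each additional well-separated collision contributes a factor at most $\tfrac14$ after~\eqref{eq:sa}. This yields both the drifted free bound~\eqref{eq:KK} and the key pinned estimate $\E\big[W^{\otimes 2,\beta,\nu}_{0,n}[\1_{K_{n-1}>0}]^{p/2}\big]\le C\,r^{\,p-(1+2/d)}$ with $r=\max_x\nu(x,\star)+\max_y\nu(\star,y)$ (Lemma~\ref{lem:core3}); part~(i) follows immediately. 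Part~(ii) is then obtained by a threshold argument on the event $A=\{W^{\otimes 2,\beta,\nu}_{0,n}[\1_{K_{n-1}>0}]\le r^a\}$: on $A$ one expands a genuine second moment, on $A^c$ one uses H\"older together with part~(i), and the formula~\eqref{eq:xi} for $\xi$ arises from optimizing over $a$. That mechanism is quite different from your H\"older-in-exponents optimization, and even setting the drift issue aside it is not clear your scheme reproduces the stated $\xi$.
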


To understand the significance of part (ii), recall from the discussion in Section~\ref{sec:intro} that $W^{\beta,x,y}_{0,n}$ depends mostly on the environment close to $(0,x)$ and $(n,y)$. We therefore expect $W_{0,n}^{\beta,x,y}$ and $W_{0,n}^{\beta,x',y'}$ to become independent if $|x-x'|$ and $|y-y'|$ are both large, and thus $W^{\beta,\nu}_{0,n}$ should concentrate around its expectation if $\nu$ is sufficiently spread out. The bound \eqref{eq:local1} makes this precise and gives a quantitative estimate on the decay in term of the largest weight of $\nu$.

\smallskip Note that some restriction on the endpoints $x$ and $y$ in \eqref{eq:local3} is necessary, since, for example, $W^{\beta,0,ne_1}_{(0,n)}=e^{\sum_{i=1}^{n-1}(\beta \omega_{i,ie_1}-\lambda(\beta))}$ is not uniformly integrable, where $e_1$ is the first coordinate vector.

\subsection{Strategy of the proof of Theorem~\ref{thm:local}}\label{sec:strat}

The proof can be considered a variation of the so-called ``chaos decomposition'' that has been very successful in the analysis of the $L^2$-phase, in particular in the proof of a local limit theorem, see \cite{S95, V06, NN21, G22}.

\smallskip To understand the strategy, it is helpful to first recall the strategy for the proof of a moment bound corresponding to Theorem~\ref{thm:local} in \cite{S95}. The idea is to decompose $W^{\beta,x,y}_{0,n}$ as $\sum_k A_k$, where $A_k$ is itself a  sum of many pairwise orthogonal terms. Thus the second moment of $\sum_k A_k$ consists only of the on-diagonal terms, and the contribution from these terms can be interpreted as a joint partition function $W^{\otimes 2,\beta,x,y}_{0,n}[\1_{X_s=X_s'\text{ at least $k$ times}}]$ of two independent polymers in the same environment that are forced to meet at least $k$ times. Here, we have introduced the notation
\begin{align}\label{eq:def_joint_restricted}
	W^{\otimes 2,\beta,x,y}_{0,n}[\1_B]\coloneqq E^{\otimes 2,x,y}_{0,n}[e^{\beta H_{(0,n)}(\omega,X_n)+\beta H_{(0,n)}(\omega,X_n')-2(n-1)\lambda(\beta)}\1_B],
\end{align}
where $P^{\otimes 2,x,y}_{0,n}$ is the law of two independent random walk bridges $(X_k)_{k=0,\dots n}$ and $(X_k')_{k=0,\dots,n}$ and $B$ is an event adapted to their common sigma field.

\smallskip To prove a moment bound for $W_{0,n}^{\beta,x,y}$ in $L^2$-weak disorder, one can verify by explicit calculations that the second moment of $A_k$ decays exponentially fast, hence it suffices to consider the first $O(\log n)$ terms. One can further verify that for $k=O(\log n)$, $A_k$ is dominated by the contribution from paths where all collisions between $X$ and $X'$ occur in $[0,n^{o(1)}]\cup[n-n^{o(1)},n]$. One can then integrate out the environment ``in the middle'' and show that the random walk bridges in the ``initial'' and ``terminal'' parts behave like simple random walks.

\smallskip This procedure does not work outside of the $L^2$-region because the second moment of $A_k$ grows exponentially and we instead work with the $p^{\operatorname{th}}$ moment. By writing
\[
	(W^{\beta,x,y}_{0,n})^p=(W^{\otimes 2,\beta,x,y}_{0,n})^{p/2}
\]
we obtain a joint partition function, which we similarly decompose as $W^{\otimes 2,\beta,x,y}_{0,n}=\sum_k \widehat A_k$. The difference is that we group successive collisions between the two polymers together if they occur in a short time interval, so that the subscript $k$ of $\widehat A_k$ counts the number of meetings that are ``well-separated'' in time. Intuitively, this gives the polymers time to move away from each other after a collision, which we know to be their typical behavior due to Theorem~\ref{thmx:B}(i). With this modification, we can again argue that it suffices to consider the first $O(\log n)$ terms.

\smallskip A second difference to $L^2$-weak disorder is that not all collisions occur in the initial and terminal segments. Instead, we show that there exists a constant $L$ such that there are at most $L$ large gaps between the collision times, where ``large'' means ``of order $n$''. We can then integrate out the environment in a large gap and follow the argument from above.

\smallskip Our decomposition of the $p^{\operatorname{th}}$-moment of the partition function is inspired by the approach in \cite[p. 108]{CC09}, and similar to that work we will often use the following sub-additive estimate,
\begin{align}\label{eq:sa}\tag{sub-add.}
\Big(\sum_{i\in I}x_i\Big)^\theta\leq \sum_{i\in I}x_i^\theta\qquad\text{ for all }\theta\in[0,1]\text{ and all non-negative }(x_i)_{i\in I}.
\end{align}

\subsection{Discussion of the integrability assumption}\label{sec:ass}

Theorem~\ref{thm:local} requires $L^{1+2/d+\eps}$-boundedness of the partition function, so in this section we discuss whether this assumption is actually satisfied for any $\beta>\beta_{cr}^{L^2}$ in the weak disorder phase.

\smallskip First, we recall that due to Theorem~\ref{thmx:p}, we have $\p(\beta)\geq 1+2/d$ in weak disorder. Nonetheless, it could be the case that $\beta\mapsto\p(\beta)$ has a discontinuity at $\beta_{cr}^{L^2}$ and that $\p$ equals $1+2/d$ in $(\beta_{cr}^{L^2},\beta_{cr}]$. To exclude that possibility, we prove the following properties of $\p$.
\begin{theorem}\label{thm:p}
The  function $\beta\mapsto\p(\beta)$ satisfies the following:
\begin{enumerate}
	\item[(i)] If $\p(\beta)\in(1+2/d,2]$, then $\p$ is right-continuous at $\beta$.
	\item[(ii)] Assume that $\P$ has finite support, i.e., there exists $A\subseteq\R$ with $|A|<\infty$ such that $\P(\omega_{0,0}\in A)=1$. If $\p(\beta)>1$, then $\p(\beta')>\p(\beta)$ for all $\beta'<\beta$.
\end{enumerate}
\end{theorem}

\smallskip A first interesting consequence of Theorem~\ref{thm:p} is that it allows to resolve the behavior at $\beta_{cr}$, as has also been noted in \cite[Corollary 2.2]{JL24} and \cite[Corollary~2.5]{JL25_2}.

\begin{corollary}\label{cor:betac}
If $d\geq 3$ and $\beta_{cr}<\infty$, then $\p(\beta_{cr})=1+\frac 2d$.
\end{corollary}
\begin{proof}
Due to Theorems~\ref{thmx:A} and~\ref{thmx:p}, we know that \eqref{eq:WD} holds at $\beta_{cr}$ and that $\p(\beta_{cr})\in[1+\frac 2d,2]$. If $\p(\beta_{cr})>1+\frac 2d$, then $\beta\mapsto\p(\beta)$ is right-continuous at $\beta_{cr}$ by Theorem~\ref{thm:p}(i), hence $\p(\beta)>1$ for some $\beta>\beta_{cr}$. This contradicts the definition of $\beta_{cr}$.
\end{proof}

Theorem~\ref{thm:local} does not apply at $\beta_{cr}$ -- the methods in this paper require that there exists $p$ such that $\sum_{t\in\N,x\in\Z^d}P(X_t=x)^p<\infty$ and $\sup_{n\in\N}\E[(W_n^{\beta})^p]<\infty$, and in the case of $\beta=\beta_{cr}$ and $p=\p(\beta_{cr})=1+\frac 2d$ both of these assertions fail. It remains an open problem to determine whether some form of Theorem~\ref{thm:local} is valid at $\beta_{cr}$. Such a result would presumably shed much light on the behavior of the polymer model at criticality and we expect that it is a prerequisite to a deeper analysis of the phase transition, for example to understand the analyticity of the free energy $\f(\beta)=\lim_{n\to\infty}\frac 1n\E[\log W_n^\beta]$ at $\beta_{cr}$.

\smallskip On the other hand, we can show that Theorem~\ref{thm:local} does apply in a non-trivial range $[\beta_{cr}^{L^2},\beta_{cr}^{L^{1+2/d}})$ beyond the $L^2$-phase. Here, we use the convention $\p(\infty)=\lim_{\beta\to\infty}\p(\beta)$.

\begin{corollary}\label{cor:nontriv}
In $d\geq 3$, let $\beta_{cr}^{L^{1+2/d}}\coloneqq \sup\{\beta\colon \p(\beta)>1+\frac 2d\}$.
\begin{enumerate}
 \item[(i)] If $\beta_{cr}^{L^{1+2/d}}<\infty$, then $\p(\beta)=1+\frac 2d$ for all $\beta\in[\beta_{cr}^{L^{1+2/d}},\beta_{cr}]$.
 \item[(ii)] If $\beta_{cr}^{L^{1+2/d}}<\infty$, then $\beta_{cr}^{L^{1+2/d}}>\beta_{cr}^{L^2}$.% and $\beta\mapsto\p(\beta)$ is continuous in $[\beta_{cr}^{L^2},\beta_{cr}^{L^{1+2/d}})$.
 \item[(iii)] $\beta\mapsto\p(\beta)$ is continuous in $[\beta_{cr}^{L^2},\beta_{cr}]$.
%  \item[(iii)] If \eqref{eq:SD} holds at $\beta_{cr}$, then $\beta\mapsto\p(\beta)$ is right-continuous at $\beta_{cr}$. If \eqref{eq:WD} holds at $\beta_{cr}$, then $\beta\mapsto\p(\beta)$ is left-continuous at $\beta_{cr}$ and $\p(\beta_{cr})=1+\frac 2d$.
\end{enumerate}
\end{corollary}

\begin{proof}
\textbf{Part(i)}: We first argue as in Corollary~\ref{cor:betac} to see that $\p(\beta_{cr}^{L^{1+2/d}})=1+\frac 2d$: If we assume $\p(\beta_{cr}^{L^{1+2/d}})\in(1+2/d,2]$, the right-continuity from Theorem~\ref{thm:p}(i) would imply $p(\beta)>1+2/d$ for some $\beta>\beta_{cr}^{L^{1+2/d}}$, which contradicts the definition of $\beta_{cr}^{L^{1+2/d}}$. Hence $\p(\beta_{cr}^{L^{1+2/d}})\leq 1+\frac 2d$, and from Theorem~\ref{thmx:p}(ii), Theorem~\ref{thmx:A}(i) and the monotonicity of $\p$ we obtain $\p(\beta)=1+\frac 2d$ on $[\beta_{cr}^{L^{1+2/d}},\beta_{cr}]$ .

\smallskip \textbf{Part(ii)}: By part(i) and Theorem~\ref{thmx:p}(iv), $\p(\beta_{cr}^{L^2})=2\neq 1+\frac 2d=\p(\beta_{cr}^{L^{1+2/d}})$.

\smallskip \textbf{Part (iii)}: First, be Theorem~\ref{thmx:p}(iii) and Corollary~\ref{cor:betac}, $\beta\mapsto\p(\beta)$ is left-continuous on $[0,\beta_{cr}]$. Moreover, by definition we have $\p(\beta)>1+2/d$ for $\beta<\beta_{cr}^{L^{1+2/d}}$, and together with Theorem~\ref{thm:p}(i) we find that $\beta\mapsto\p(\beta)$ is right-continuous in $[\beta_{cr}^{L^2},\beta_{cr}^{L^{1+2/d}})$. This is enough to conclude in the case $\beta_{cr}^{L^{1+2/d}}=\infty$. On the other hand, if $\beta_{cr}^{L^{1+2/d}}<\infty$, part(i) additionally shows that $\beta\mapsto\p(\beta)$ is constant, hence right-continuous, on $[\beta_{cr}^{L^{1+2/d}},\beta_{cr}]$.
\end{proof}

We refer to Figure~\ref{fig:p} for an illustration of the possible scenarios for $\p$ at $\beta_{cr}$.

\begin{corollary}\label{cor:fin}
If $d\geq 3$ and $\P$ has finite support, then $\beta\mapsto\p(\beta)$ is strictly decreasing in $[\beta_{cr}^{L^2},\beta_{cr}]$. In particular, $\p(\beta)>1+\frac 2d$ for all $\beta<\beta_{cr}$.
\end{corollary}
The assumption of finite support is of course quite restrictive and we expect that strict monotonicity holds for general environments. We note that this assumption is necessary because we rely on a quantitative hypercontractive estimate (Theorem~\ref{thmx:myhyper}) which is only available in this setup, whereas another crucial estimate (equation~\eqref{eq:compare}) is proved under the weaker assumption of bounded environment (Theorem~\ref{thmx:alea}). See also the discussion at the end of Appendix~\ref{app:hyper}.

\begin{figure}[t]
    \includegraphics[width=.66\textwidth]{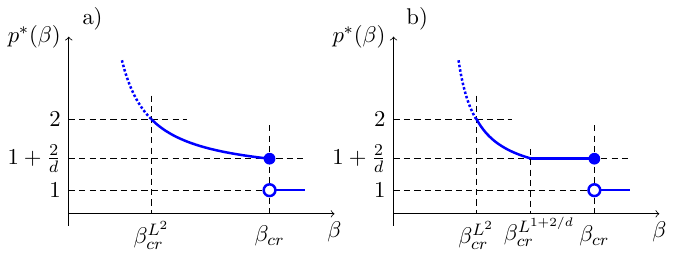}
    \caption{Classification of the behavior for $\p(\beta)$ in the case $\beta_{cr}<\infty$. We show that the only point of non-continuity beyond $\beta_{cr}^{L^2}$ is $\beta_{cr}$. Panel a) depicts the situation for finitely supported environments, which we expect to hold for general weights, and panel b) shows a situation with $\beta_{cr}^{L^{1+2/d}}<\beta_{cr}$.}\label{fig:p}
\end{figure}

\smallskip In the context of Corollary~\ref{cor:betac}, it is relevant to discuss the result from \cite{V21} about a closely related model, called the directed polymer in $\gamma$-stable environment. It is very similar to our model, except the assumption \eqref{eq:exp_mom} is dropped and we only require exponential moments up to some $\beta_0\in(0,\infty)$. It is then convenient to re-parameterize the model and consider an environment $\eta=(\eta_{t,x})_{t\in\N,x\in\Z^d}$ whose marginals are independent, centered, supported on $[-1,\infty)$ and have a Pareto-type tail,
\begin{align*}
\P(\eta_{t,x}\geq t)\sim Ct^{-\gamma}\qquad\text{ for }t\to\infty.
\end{align*}
The partition function is defined to be
\begin{align*}
\widetilde W^\beta_n\coloneqq E\Big[\prod_{t=1}^n (1+\beta \eta_{t,X_t})\Big],
\end{align*}
which is, for $\beta\in[0,1]$, a non-negative martingale. The main result in \cite{V21} is that a non-trivial phase transition occurs, i.e., $\lim_{n\to\infty}\widetilde W^\beta_n>0$ for some $\beta>0$, if and only if $\gamma>1+2/d$. Let $\widetilde p^*(\gamma,\beta)$ denote the critical exponent for $(\widetilde W^\beta_n)_{n\in\N}$, defined analogously to \eqref{eq:def_p}, which trivially satisfies $\widetilde p^*(\gamma,\beta)\leq \gamma$. The proof of \cite[Theorem~1.6]{V21} also reveals that $\widetilde p^*(\gamma,\beta)\in(1+2/d,\gamma)$ for $\gamma>1+2/d$ and $\beta>0$ small enough. In view of Corollary~\ref{cor:betac}, it is natural to conjecture that $\widetilde p^*(\gamma,\beta)\downarrow 1+\frac 2d$ as $\beta$ approaches the critical inverse temperature (for fixed $\gamma$).

\smallskip We also mention the works \cite{BL21, BL22, BCL23} concerning the existence of an \emph{intermediate weak disorder} phase for $d\geq 3$ and $\gamma\in(0,1)\cup(1,1+\frac 2d]$, which does not have a counterpart in our model since $\p(\beta)\in\{1\}\cup[1+\frac2d,\infty)$. In this regime, strong disorder holds for all $\beta>0$ and they prove that the partition function has a non-trivial limit,  called the \emph{continuum directed polymer}, if one chooses an appropriate time-dependent $\beta_n$. In contrast to the present work, the main technical difficulty arises from the small values in the environment, which are controlled by a careful martingale argument. The argument used to control the large values of the environment is somewhat similar to our techniques, compare for example \cite[Proposition~4.2]{BL22} and Lemma~\ref{lem:core}.

\subsection{Consequences of the key moment bound}\label{sec:appl}

In this section, we collect a number of results that can be proved quickly with the help of Theorem~\ref{thm:local}.

\subsubsection{Local limit theorem for the polymer measure}\label{sec:alt}

We first give a precise statement for the heuristic \eqref{eq:informal} from Section~\ref{sec:intro}.

\begin{corollary}\label{cor:informal}
	Assume $d\geq 3$ and $\p(\beta)>1+2/d$ and let $p\in(1+2/d,2\wedge \p(\beta))$. For any $\eps\in(0,\frac{3}{4})$, it holds that
	\begin{align}\label{eq:informal2}
		\lim_{r\to\infty}\sup_{n\geq r^{4(1+\eps)}}\sup_{\substack{x,y\in\Z^d\colon (0,x)\leftrightarrow(n,y),\\|x-y|\leq n^{3/4-\eps}}}\E\Big[\Big|\frac{W^{\beta,x,y}_{0,n}}{W^{\beta,x,\star}_rW^{\beta,\star,y}_{n-r,n}}-1\Big|^p\Big]=0,\\
		\lim_{r\to\infty}\sup_{n\geq r^{4(1+\eps)}}\sup_{\substack{x,y\in\Z^d\colon (0,x)\leftrightarrow(n,y),	\\|x-y|\leq n^{3/4-\eps}}}\E\Big[\Big|W^{\beta,x,y}_{0,n}-W^{\beta,x,\star}_rW^{\beta,\star,y}_{n-r,n}\Big|^p\Big]=0.\label{eq:informal3}
	\end{align}
\end{corollary}
Note that \eqref{eq:informal3} has the same form as the local limit theorem proved in \cite{V06}.

\begin{remark}
Using \eqref{eq:informal2}, we see that for $1\ll r\ll n$ and $|y|=o(n^{3/4})$,
\begin{align*}
	\frac{\mu_{\omega,n-1}^{\beta}(X_{n}=y)}{P(X_{n}=y)}&=\frac{W^{\beta,0,y}_{0,n}}{W_{n-1}^{\beta,0,\star}}=\frac{W^{\beta,0,\star}_r}{W^{\beta,0,\star}_{n-1}}\frac{W^{\beta,0,y}_{0,n}}{W_r^{\beta,0,\star}W^{\beta,\star,y}_{n-r,n}}W^{\beta,\star,y}_{n-r,n}\approx W^{\beta,\star,y}_{-\infty,n},
\end{align*}
which heuristically recovers the density that appeared in \eqref{eq:informal}.
\end{remark}
\subsubsection{Critical exponent of the pinned partition function}

Analogously to $\p$, we define
\begin{align*}
	\pp(\beta)\coloneqq \sup\left\{p\colon (W_n^{\beta, 0,0})_{n\in2\N}\text{ is $L^p$-bounded}\right\}.
\end{align*}

Theorem~\ref{thm:local}(i) immediately implies the following:
\begin{corollary}\label{cor:pp}
In any dimensions and for any $\beta\geq 0$, it holds that $\pp(\beta)\leq\p(\beta)$. If $\p(\beta)\in(1+2/d,2]$, then also $\pp(\beta)=\p(\beta)$.
\end{corollary}

\subsubsection{Stability under perturbation by small drifts}\label{sec:drift}

Next, we discuss the effect perturbing the model  by adding a small drift to the underlying random walk. For $\llambda\in\R^d$, we let $P^\llambda$ be the random walk with increment distribution
\begin{align*}
	P^{\llambda}(X_{k+1}=x+y|X_k=x)=\1_{|y|_1=1}\frac{1}{2d}e^{\llambda \cdot y-\varphi(\llambda)},
\end{align*}
where $\varphi(\llambda)=\log(\frac{1}{2d}\sum_{z\colon |z|_1=1}e^{\llambda\cdot z})$. The drift is denoted by
\begin{align}\label{eq:def_m}
	\m(\llambda)=E^{\llambda}[X_1]=\frac{1}{2d}\sum_{y\colon |y|_1=1}ye^{\llambda \cdot y-\varphi(\llambda)}.
\end{align}

Note that $\m(\llambda)\sim \frac{\llambda}d$ as $\llambda\to 0$. We write $P^{\llambda,x,\star}_{m,n}$ and $P^{\llambda,\star,y}_{m,n}$ for the laws of $(X_k-X_m+x)_{m\leq k\leq n}$ and $(X_k-X_n+y)_{m\leq k\leq n}$ under $P^\llambda$, i.e., the random walks going forward and backward from a given space-time point. Note that $E^{\llambda,x,\star}_{m,n}[X_n]=x+(n-m)\m(\llambda)$ and $E^{\llambda,\star,y}_{m,n}[X_m]=y-(n-m)\m(\llambda)$. We furthermore write $W^{\beta,\llambda,x,\star}_{m,n}$ and $W^{\beta,\llambda,\star,y}_{m,n}$ for the corresponding partition function and reverse partition functions, with $W_n^{\beta,\llambda}\coloneqq W^{\beta,\llambda,0,\star}_{0,n}$. The critical exponent is generalized as follows:
\begin{align*}
	\p(\beta,\llambda)\coloneqq \sup\big\{p\colon (W_n^{\beta,\llambda})_{n\in\N}\text{ is $L^p$-bounded}\big\}.
\end{align*}

\begin{corollary}\label{cor:drift}
Assume $d\geq 3$ and $\p(\beta)\in(1+2/d,2]$. Then $\llambda\mapsto \p(\beta,\llambda)$ is lower-semicontinuous at $\0$, that is,
\begin{align*}
	\lim_{\eps\downarrow 0}\inf_{\llambda\in[-\eps,\eps]^d}\p(\beta,\llambda)\geq \p(\beta,\0).
\end{align*}
In particular, there exists $\lambda_0$ such that weak disorder holds in $[-\lambda_0,\lambda_0]^d$, i.e.,
\begin{align}\label{eq:weak_stable}
	\lim_{n\to\infty}W^{\beta,\llambda}_n>0\qquad\text{ for all }\llambda\in[-\lambda_0,\lambda_0]^d.
\end{align}
\end{corollary}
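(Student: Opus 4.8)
The plan is to deduce lower-semicontinuity of $\llambda\mapsto\p(\beta,\llambda)$ at $\0$ from Theorem~\ref{thm:local}(i) by viewing the drifted partition function $W^{\beta,\llambda}_n$ as a small perturbation of the driftless one and controlling the Radon-Nikodym derivative between $P^\llambda$ and $P$. Fix $p\in(1+2/d,\p(\beta)\wedge 2)$; by Theorem~\ref{thm:local}(i) there is $\alpha>0$ with $\sup_n\sup_{|x-y|\le\alpha n}\E[(W^{\beta,x,y}_{0,n})^p]<\infty$. First I would write, for $\llambda$ small,
\begin{align*}
	W^{\beta,\llambda}_n=E^\llambda\big[e^{\beta H_n(\omega,X)-n\lambda(\beta)}\big]=\sum_{y}P^\llambda(X_n=y)\,W^{\beta,0,y}_{0,n}\,\frac{(\text{boundary weights})}{(\text{boundary weights})},
\end{align*}
so up to the harmless time-$n$ (and time-$0$) environment factor the drifted partition function is the $\nu$-average $W^{\beta,\nu_{n,\llambda}}_{0,n}$ of pinned partition functions, where $\nu_{n,\llambda}(0,y)=P^\llambda(X_n=y)$. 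By a standard large-deviation (Chernoff) bound for the drifted walk, $P^\llambda(|X_n|\ge\alpha n)$ decays exponentially in $n$ once $|\m(\llambda)|<\alpha$, i.e.\ once $\llambda\in[-\eps_0,\eps_0]^d$ for some $\eps_0=\eps_0(\alpha)$; on this bulk event the summands $W^{\beta,0,y}_{0,n}$ have uniformly bounded $p^{\text{th}}$ moment by Theorem~\ref{thm:local}(i).

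The second step is to pass from the $\nu$-average back to a uniform $L^p$-bound. Using \eqref{eq:sa} with $\theta=p/2\cdot$(\dots) is too lossy here since $p>1$; instead I would bound $\E[(W^{\beta,\llambda}_n)^p]$ by splitting the $y$-sum into the bulk $|y|\le\alpha n$ and the tail, applying Minkowski's inequality in $L^p(\P)$ to the bulk part,
\begin{align*}
	\big\|W^{\beta,\llambda}_n\ind_{\text{bulk}}\big\|_{L^p(\P)}\le\sum_{|y|\le\alpha n}P^\llambda(X_n=y)\,\big\|W^{\beta,0,y}_{0,n}\big\|_{L^p(\P)}\le\sup_{n,|x-y|\le\alpha n}\big\|W^{\beta,x,y}_{0,n}\big\|_{L^p(\P)}<\infty,
\end{align*}
and controlling the tail part by Cauchy--Schwarz together with the exponential decay of $P^\llambda(|X_n|\ge\alpha n)$ and the crude bound $\E[(W^{\beta,\llambda}_n)^{2p}]\le e^{Cn}$ coming from \eqref{eq:exp_mom} (any exponential upper bound suffices since it is beaten by the large-deviation rate after shrinking $\eps_0$). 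This gives $\sup_n\E[(W^{\beta,\llambda}_n)^p]<\infty$ for all $\llambda\in[-\eps_0,\eps_0]^d$, hence $\p(\beta,\llambda)\ge p$ on that cube; letting $p\uparrow\p(\beta)$ and then $\eps_0\downarrow0$ yields the claimed lower-semicontinuity. Finally, $L^p$-boundedness with $p>1$ forces uniform integrability of the nonnegative martingale $(W^{\beta,\llambda}_n)_n$, so its limit is nondegenerate, $\E[W^{\beta,\llambda}_\infty]=1$, i.e.\ \eqref{eq:weak_stable} holds with $\lambda_0=\eps_0$.

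The main obstacle I expect is the bookkeeping around the boundary environment terms and the reduction of $W^{\beta,\llambda}_n$ to an average over $\EE_n$-type measures: one must check that replacing $P^\llambda$ by $P$ inside the pinned partition functions (the pinned objects $W^{\beta,0,y}_{0,n}$ are defined with the \emph{undrifted} bridge) is legitimate, and that the discrepancy between the drifted and undrifted bridge laws does not spoil the moment bound. The cleanest route is probably to note that the bridge law $P^{\llambda,0,y}_{0,n}$ equals $P^{0,y}_{0,n}$ exactly --- conditioning the drifted walk on both endpoints removes the drift --- so that only the endpoint distribution $\nu_{n,\llambda}$ carries the $\llambda$-dependence, which is exactly the structure Theorem~\ref{thm:local}(i) is built to handle. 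Granting that observation, the rest is the routine large-deviation-plus-Minkowski argument sketched above.
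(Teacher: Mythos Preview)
Your bulk argument is correct and pleasant: the observation that the bridge law $P^{\llambda,0,y}_{0,n}=P^{0,y}_{0,n}$ (so only the endpoint distribution carries the $\llambda$-dependence) together with Minkowski and Theorem~\ref{thm:local}(i) indeed gives
\[
\Big\|\sum_{|y|\le\alpha n}P^{\llambda}(X_n=y)\,W^{\beta,0,y}_{0,n}\,e^{\beta\omega_{n,y}-\lambda(\beta)}\Big\|_{L^p(\P)}\le C
\]
uniformly in $n$ and in $\llambda$ with $|\m(\llambda)|<\alpha$.

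The gap is in the tail. Your claim that ``any exponential upper bound suffices since it is beaten by the large-deviation rate after shrinking $\eps_0$'' is false. The value $\alpha$ is \emph{fixed} by Theorem~\ref{thm:local}(i) (and in the proof it comes out small, namely $\alpha=|\m(\lambda_0,\dots,\lambda_0)|_\infty/2$ for some $\lambda_0$ determined by a continuity argument). The large-deviation rate for $P^{\llambda}(|X_n|>\alpha n)$, as $\llambda\to\0$, converges to the simple-random-walk rate $I(\alpha)\sim c\alpha^2$, which is fixed and potentially tiny. On the other hand, any Cauchy--Schwarz/H\"older bound on the tail produces a growing factor of the type $e^{np(\lambda(2\beta)-2\lambda(\beta))/2}$ (or similar), whose rate depends only on $\beta$ and the law of $\omega$. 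There is no mechanism by which shrinking $\eps_0$ makes $I(\alpha)$ larger than this fixed constant; you would need to enlarge $\alpha$, which Theorem~\ref{thm:local}(i) does not permit.

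The paper sidesteps this difficulty entirely: the drift $\llambda$ is carried through the whole chaos decomposition in Section~\ref{sec:chaos} (Lemmas~\ref{lem:core}, \ref{lem:core2}, \ref{lem:chaos} are all stated uniformly in $\llambda\in[-\lambda_0,\lambda_0]^d$), and \eqref{eq:KK} directly yields $\sup_n\sup_{\llambda\in[-\lambda_0,\lambda_0]^d}\E[(W^{\beta,\llambda}_n)^p]<\infty$ without any splitting into bulk and tail. So the corollary is a one-line consequence of \eqref{eq:KK}, not of Theorem~\ref{thm:local}(i).
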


Corollary~\ref{cor:drift} yields the following consequence for the large deviation rate function. For background on the theory of large deviations we refer to, for example,  \cite{DZ}.

\begin{corollary}\label{cor:LDP}
In any dimension and for any $\beta$, the polymer measure $(\mu_{\omega,n}^\beta(X_n/n\in\cdot))_{n\in\N}$ satisfies a quenched large deviation principle with deterministic, convex, good, rate function $I^\beta$. Moreover, in $d \geq 3$ and in weak disorder it holds that $I^\beta\geq I^{0}$, where $I^{0}$ is the rate function of the simple random walk. If additionally $\p(\beta)>1+2/d$, then there exists $\eps>0$ such that $I^\beta|_{[-\eps,\eps]^d}\equiv I^{0}|_{[-\eps,\eps]^d}$.
\end{corollary}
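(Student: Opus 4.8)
The plan is to deduce the large deviation principle (LDP) from standard subadditivity arguments and then use Corollaries~\ref{cor:drift} and~\ref{cor:informal} to compare the rate functions near the origin. First I would establish the existence of the quenched LDP with a deterministic rate function $I^\beta$. This is classical: the quantity $-\frac1n\log W^{\beta,0,nx}_{0,n}$ (along lattice-compatible sequences $x$) is approximately superadditive because concatenating a path from $(0,0)$ to $(m,y)$ with one from $(m,y)$ to $(n,z)$ shows $W^{\beta,0,z}_{0,n}\geq c\, W^{\beta,0,y}_{0,m}W^{\beta,y,z}_{m,n}$ up to random-walk combinatorial factors, and the two pinned partition functions on disjoint time intervals are independent. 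An application of Kingman's subadditive ergodic theorem (in the spatial-ergodic form, using shift-invariance of $\P$) yields an a.s.\ deterministic limit $-J^\beta(x)$, and the convexity and goodness of $I^\beta$ follow from the lattice structure and the exponential moment bound~\eqref{eq:exp_mom}; a Gärtner--Ellis / tilting argument with the drifted walks $P^{\llambda}$ converts this into a full LDP for $\mu_{\omega,n}^\beta(X_n/n\in\cdot)$, with $I^\beta$ the Legendre dual of $\llambda\mapsto\lim_n\frac1n\log W^{\beta,\llambda}_n$. The inequality $I^\beta\geq I^0$ in weak disorder is immediate: since $W_\infty^\beta\in(0,\infty)$ a.s., we have $-\frac1n\log W^{\beta,0,nx}_{0,n}\geq -\frac1n\log P(X_n=nx)+o(1)$ pointwise, because the point-to-point polymer partition function is at most $P(X_n=nx)\cdot W^{\beta,\star,nx}_{0,n}$-type quantities whose logarithm is sublinear; more carefully, $\mu^\beta_{\omega,n}(X_n=nx)=W^{\beta,0,nx}_{0,n}/W^{\beta,0,\star}_n$ and both numerator normalization and $W^{\beta,0,\star}_n\to W_\infty^\beta$ contribute only $o(n)$ to the exponent, so the exponential decay rate of $\mu^\beta$ is at least that of $P$.

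The substantive new content is the local statement $I^\beta|_{[-\eps,\eps]^d}\equiv I^0|_{[-\eps,\eps]^d}$ when $\p(\beta)>1+2/d$. Here I would argue by duality. By Corollary~\ref{cor:drift}, the hypothesis $\p(\beta)>1+2/d$ (in particular $\p(\beta)\in(1+2/d,2]$, or one handles $\p(\beta)>2$ trivially since then $L^2$-boundedness gives the conclusion by existing results) yields $\lambda_0>0$ such that weak disorder holds throughout $[-\lambda_0,\lambda_0]^d$, i.e.\ $W^{\beta,\llambda}_\infty\coloneqq\lim_n W^{\beta,\llambda}_n\in(0,\infty)$ a.s.\ for all such $\llambda$. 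Consequently $\frac1n\log W^{\beta,\llambda}_n\to 0$ a.s., which means the (tilted) free energy of the polymer coincides with that of the drifted walk: $\lim_n\frac1n\log E[e^{\beta H_n-n\lambda(\beta)}e^{\llambda\cdot X_n-n\varphi(\llambda)}]=0=\lim_n\frac1n\log E[e^{\llambda\cdot X_n-n\varphi(\llambda)}]$ for $\llambda\in[-\lambda_0,\lambda_0]^d$. Taking Legendre transforms, the polymer rate function $I^\beta$ and the walk rate function $I^0$ have the same value and the same subdifferential structure on the image $\m([-\lambda_0,\lambda_0]^d)$, which is a neighborhood of $\0$ since $\llambda\mapsto\m(\llambda)$ is a diffeomorphism near $\0$ with $\m(\0)=\0$. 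Choosing $\eps>0$ small enough that $[-\eps,\eps]^d\subset\m([-\lambda_0,\lambda_0]^d)$ gives $I^\beta=I^0$ there. One must also confirm that $I^\beta$ is genuinely the dual of the tilted free energy (not merely bounded by it), which again uses that the tilted partition functions converge a.s.\ to positive limits so that no "missing mass'' escapes — this is where weak disorder throughout the neighborhood, rather than just at $\llambda=\0$, is essential.

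The main obstacle I anticipate is making the tilting/duality argument fully rigorous in the \emph{quenched} setting: one needs the quenched free energy $\llambda\mapsto\lim_n\frac1n\log W^{\beta,\llambda}_n$ to exist a.s.\ simultaneously for all $\llambda$ in a neighborhood (a joint-in-$\llambda$ a.s.\ statement), to be deterministic, and to equal zero on $[-\lambda_0,\lambda_0]^d$; and then to invoke a quenched Gärtner--Ellis theorem, which requires essential smoothness / steepness of the limiting log-moment generating function. Smoothness is delicate precisely at the boundary of the weak disorder region, but since we only claim the identity on the \emph{interior} neighborhood $[-\eps,\eps]^d$ with $\eps$ chosen well inside $[-\lambda_0,\lambda_0]^d$, differentiability of $\varphi$ (hence of $I^0$) on that set transfers to $I^\beta$ via the coincidence of free energies, and the steepness issue can be sidestepped by working locally. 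A secondary technical point is upgrading the convergence $\frac1n\log W^{\beta,\llambda}_n\to 0$ from fixed $\llambda$ to locally uniform in $\llambda$; this follows from monotonicity/convexity of $\llambda\mapsto\frac1n\log W^{\beta,\llambda}_n$ in each coordinate together with pointwise convergence on a dense set, a standard Dini-type argument.
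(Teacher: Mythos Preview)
Your strategy for the main claim---use Corollary~\ref{cor:drift} to obtain weak disorder for all drifts $\llambda$ in a neighborhood of $\0$, deduce that the quenched log-moment generating function equals $\varphi(\llambda)$ there, and then pass to the rate function by Legendre duality---is exactly what the paper does. The paper also cites the existence of the quenched LDP with deterministic, convex, good rate function as known (referring to \cite[Chapter~9.2]{C17}) rather than re-deriving it via subadditivity.

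Where you diverge is in the proof of $I^\beta\geq I^0$. The paper handles this entirely through the moment generating function: writing
\[
\log\mu_{\omega,n}^\beta\big[e^{\llambda\cdot X_n}\big]=\log W_n^{\beta,\llambda}+n\varphi(\llambda)-\log W_n^\beta,
\]
it uses concentration to identify the a.s.\ limit of $\tfrac1n\log W_n^{\beta,\llambda}$ with its expectation, and then Jensen gives $\tfrac1n\E[\log W_n^{\beta,\llambda}]\leq 0$ for \emph{every} $\llambda\in\R^d$. Together with $\tfrac1n\log W_n^\beta\to 0$ in weak disorder, this yields $\limsup_n\tfrac1n\log\mu_{\omega,n}^\beta[e^{\llambda\cdot X_n}]\leq\varphi(\llambda)$, and $I^\beta\geq I^0$ follows by duality. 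This is both shorter and cleaner than your point-to-point approach, which as written has a slip: $\mu_{\omega,n}^\beta(X_n=nx)=P(X_n=nx)\,W^{\beta,0,nx}_{0,n}/W_n^\beta$, not $W^{\beta,0,nx}_{0,n}/W_n^\beta$; and the subexponential control of $W^{\beta,0,nx}_{0,n}$ you need would have to come from a Markov--Borel--Cantelli argument (since $\E[W^{\beta,0,nx}_{0,n}]=1$), not from the vague domination you sketch.

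Your anticipated obstacles (essential smoothness, joint-in-$\llambda$ a.s.\ convergence) are not needed. Since the LDP with convex good rate function $I^\beta$ is taken as given and $|X_n|\leq n$, Varadhan's lemma applies to the linear functional $\llambda\cdot x$ and identifies $(I^\beta)^*$ with the limiting log-MGF; the biconjugate theorem then gives $I^\beta$ back from the log-MGF pointwise, so countably many $\llambda$ suffice and no steepness hypothesis is required.
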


The equality of $I^\beta$ and $I^0$ in a neighborhood of the origin was first noted in \cite[Exercise~9.1]{C17} in the $L^2$-phase. It was extended to the interior of the whole weak disorder phase in two closely related models in \cite{FJ20, J21_2}.

\subsubsection{Replica overlap and delocalization}

We close this section by discussing some consequences for the replica overlap $I_n^{\beta,2}$ introduced before Theorem~\ref{thmx:B}. This quantity appears naturally in the Doob decomposition of $(\log W_n)_{n\in\N}$, see \cite[Section 2]{CSY03}, and this connection is the basis for the proof of Theorem~\ref{thmx:B}(ii). Note that the family $(\mu_{\omega,k}^\beta)_{k=1,\dots,n}$ is not consistent in the sense of Kolmogorov’s extension theorem, see for example \cite[A.3.1]{durrett}, so it is not possible to interpret the expression $\sum_{k=1}^nI_k^{\beta,2}$ as the expected overlap between two polymers up to time $n$. For that reason, it is more natural to consider a fixed time horizon, i.e., $\sum_{k=1}^nI_{k,n}^{\beta,2}$ with

\begin{align*}
	I_{k,n}^{\beta,2}\coloneqq \sum_{x\in\Z^d}\mu_{\omega,n}^{\beta}(X_k=x)^2.
\end{align*}
This object cannot be analyzed with the approach based on the Doob decomposition, although we note that some estimates have been obtained in \cite{CC13} in a related model with Gaussian disorder with the help of Mallivian calculus.

\smallskip In another direction, it is natural to wonder about the summability of $I_n^{\beta,p}$, defined by
\begin{align*}
	I_n^{\beta,p}\coloneqq \sum_{x\in\Z^d}\mu_{\omega,n}^{\beta}(X_{n+1}=x)^p.
\end{align*}
There is no interpretation of $I_n^{\beta,p}$ in terms of replicas as in the case $p=2$, but the rate of decay of $I_n^{\beta,p}$ is closely related to the localization or delocalization of $\mu_{\omega,n}^\beta$. With the help of Theorem~\ref{thm:local}, we obtain the following:

\begin{corollary}\label{cor:I}
Assume $d\geq 3$ and $\p(\beta)>1+2/d$.
\begin{enumerate}
	\item [(i)] For any $p>1+2/d$, it holds that $\sum_nI_n^{\beta,p}<\infty$ almost surely.
	\item[(ii)] It holds that $\sup_{n\in\N}\sum_{k=1}^n I_{k,n}^{\beta,2}<\infty$ almost surely.
\end{enumerate}
\end{corollary}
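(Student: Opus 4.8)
The plan is to reduce both statements to a single moment estimate that uses only the $p$-th moment bound of Theorem~\ref{thm:local}(i), with $p\in(1+2/d,\p(\beta)\wedge 2)$ fixed once and for all (this interval is non-empty by assumption). Since $\mu_{\omega,n}^\beta(X_k=x)\le 1$, the quantities $I_n^{\beta,q}$, $I_{k,n}^{\beta,q}$ are non-increasing in $q$ and $I_{k,n}^{\beta,2}\le 1$; hence, by the sub-additivity inequality \eqref{eq:sa} with exponent $p/2\in[0,1]$,
\begin{align*}
  I_{k,n}^{\beta,2}\le \big(I_{k,n}^{\beta,2}\big)^{p/2}\le\sum_{x}\mu_{\omega,n}^\beta(X_k=x)^p.
\end{align*}
Moreover, as the $(n{+}1)$-st step carries no energy, $\mu_{\omega,n}^\beta(X_{n+1}=\cdot)$ is a uniform average of $\mu_{\omega,n}^\beta(X_n=\cdot)$ over nearest neighbours, so by convexity $I_n^{\beta,p}\le\sum_x\mu_{\omega,n}^\beta(X_n=x)^p$; and for $q\ge p$ we have $I_n^{\beta,q}\le I_n^{\beta,p}$ (the case $q\ge 2$ being in any case covered by Theorem~\ref{thmx:C}(i)). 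Thus it suffices to bound $\sum_n\sum_x\mu_{\omega,n}^\beta(X_n=x)^p$ for (i) and $\sup_n\sum_{k=1}^n\sum_x\mu_{\omega,n}^\beta(X_k=x)^p$ for (ii).

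Splitting the path at time $k$ and using the Markov property of the walk gives, up to harmless boundary-environment factors (cf.\ \eqref{eq:exp_mom}),
\begin{align*}
  \mu_{\omega,n}^\beta(X_k=x)=\frac{P(X_k=x)\,e^{\beta\omega_{k,x}-\lambda(\beta)}\,W^{\beta,0,x}_{0,k}\,W^{\beta,x,\star}_{k,n}}{W_n^\beta},
\end{align*}
where $\omega_{k,x}$, $W^{\beta,0,x}_{0,k}$ and $W^{\beta,x,\star}_{k,n}$ use the environment at the disjoint time-slices $\{k\}$, $\{1,\dots,k-1\}$ and $\{k+1,\dots,n\}$ and are therefore mutually independent. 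For $\delta>0$ put $A_\delta=\{\inf_m W_m^\beta\ge\delta\}$; since $W_m^\beta>0$ for every $m$ and $W_m^\beta\to W_\infty^\beta>0$ a.s.\ in weak disorder, $\P(\bigcup_{\delta>0} A_\delta)=1$, so it is enough to prove the asserted finiteness on each $A_\delta$. On $A_\delta$ we bound $(W_n^\beta)^{-p}\le\delta^{-p}$, and for part (ii) we additionally replace $W^{\beta,x,\star}_{k,n}$ by its running maximum $\overline W^{\,\beta,x}_k:=\sup_{n\ge k}W^{\beta,x,\star}_{k,n}$ in order to absorb the supremum over the horizon.

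Raising to the power $p$, taking expectations and using independence leaves a sum of the form $\sum_x P(X_k=x)^p\,\E[e^{p\beta\omega_{0,0}-p\lambda(\beta)}]\,\E[(W^{\beta,0,x}_{0,k})^p]\,\E[(\overline W^{\,\beta,x}_k)^p]$. The first expectation is finite by \eqref{eq:exp_mom}; the second is bounded uniformly over $|x|\le\alpha k$ by Theorem~\ref{thm:local}(i) (with the same $\alpha$); the third equals $\E[(\sup_m W_m^\beta)^p]<\infty$ because $(W_m^\beta)_m$ is an $L^p$-bounded martingale ($p<\p(\beta)$) and Doob's maximal inequality applies. Since $\sup_x P(X_k=x)\le Ck^{-d/2}$ by the local CLT for the walk, $\sum_{|x|\le\alpha k}P(X_k=x)^p\le C'k^{-\frac d2(p-1)}$, which is summable in $k$ precisely because $p>1+2/d$. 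The range $|x|>\alpha k$ contributes at most $\sup_{n\ge k}\mu_{\omega,n}^\beta(|X_k|>\alpha k)$, whose $A_\delta$-expectation is at most $\delta^{-1}\E[\sup_m W_m^\beta]\,P(|X_k|>\alpha k)$ (integrate out the environment after replacing $1/W_n^\beta$ by $\delta^{-1}$, using $\E[W^{\beta,0,x}_{0,k}]=1$ and Theorem~\ref{thmx:p}(i)), and $\sum_k P(|X_k|>\alpha k)<\infty$ by standard large-deviation estimates. Summing over $k$ — respectively taking only $k=n$ and summing over $n$ for part (i) — and letting $\delta\downarrow 0$ completes the argument.

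The main obstacle is the uniformity in the time horizon $n$ in part (ii): a direct second-moment computation for $I_{k,n}^{\beta,2}$ would require second moments of the pinned partition functions $W^{\beta,0,x}_{0,k}$, which need not be finite when $\p(\beta)<2$. This is circumvented in two steps — first, $I_{k,n}^{\beta,2}\le 1$ together with \eqref{eq:sa} downgrades the $\ell^2$-overlap to an $\ell^p$-quantity, for which Theorem~\ref{thm:local}(i) supplies the needed moment bound; second, passing to the running maximum $\overline W^{\,\beta,x}_k$ of the forward partition function removes the dependence on $n$ while preserving $L^p$-boundedness via Doob's inequality.
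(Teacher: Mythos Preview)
Your proof is correct and follows essentially the same approach as the paper's: both split off the factor $(W_n^\beta)^{-1}$ using $\inf_m W_m^\beta>0$, decompose $\mu_{\omega,n}^\beta(X_k=x)$ into a pinned partition function on $[0,k]$ times a forward partition function on $[k,n]$, replace the latter by its running maximum (controlled via Doob's inequality), invoke Theorem~\ref{thm:local}(i) for the pinned factor on $|x|\le\alpha k$, and dispatch $|x|>\alpha k$ by a large-deviation estimate for the simple random walk. The only cosmetic differences are that the paper pulls out $\sup_m(W_m^\beta)^{-2}$ as an a.s.\ finite random multiplier rather than restricting to $A_\delta$, handles the far field via Borel--Cantelli instead of a first-moment bound, and applies \eqref{eq:sa} to the full sum $\sum_k(\cdots)^2$ rather than first using $I_{k,n}^{\beta,2}\le(I_{k,n}^{\beta,2})^{p/2}$; none of these affect the substance of the argument. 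One small wording point: ``fixed once and for all'' is slightly loose for part~(i), since for a given exponent $q>1+2/d$ you need $p\le q$ --- but you clearly intend to choose $p$ close to $1+2/d$, which suffices.
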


Corollary~\ref{cor:I}(i) places some restrictions on the rate of decay of $\max_{x\in\Z^d}\mu_{\omega,n}^{\beta}(X_{n+1}=x)$, simultaneously for all $n$ large enough.

\smallskip As a final consequence of Theorem~\ref{thm:local}, we prove an upper bound on $\max_{x\in\Z^d}\mu_{\omega,n}^\beta(X_{n+1}=x)$ for typical $n$.

\begin{corollary}\label{cor:typical}
	Assume $d\geq 3$ and $\p(\beta)>1+2/d$. For any $\eps>0$, it holds that
\begin{equation}\label{eq:aasa}
\begin{split}
	&\lim_{n\to\infty}\P\Big(\max_{x\in\Z^d}\mu_{\omega,n}^{\beta}(X_{n+1}=x)\geq  n^{-\frac{d}{2}(1-\frac{1}{\p\wedge 2})+\eps}\Big)\\&=\lim_{n\to\infty}\P\Big((I_n^{\beta,2})^{1/2}\geq n^{-\frac{d}{2}(1-\frac{1}{\p\wedge 2})+\eps}\Big)=0.
\end{split}
\end{equation}
\end{corollary}
We believe that it possible to prove the following lower bound:
\begin{align*}
	\lim_{n\to\infty}\P\Big(\max_{x\in\Z^d}\mu_{\omega,n}^\beta(X_{n+1}=x)\leq  n^{-\frac{d}{2}(1-\frac 1\p)-\eps}\Big)=\lim_{n\to\infty}\P\Big((I_n^{\beta,2})^{1/2}\leq n^{-\frac{d}{2}(1-\frac 1\p)-\eps}\Big)=0.
\end{align*}
In particular, we think that \eqref{eq:aasa} is sharp outside of the $L^2$-phase.
On the other hand, we expect that the situation is different in $L^2$-weak disorder, where it should be the case that, in probability,
\begin{align*}
	\lim_{n\to\infty}-\frac{1}{\log n}\max_{x\in\Z^d}\log \mu_{\omega,n}^\beta(X_n=x)=\frac{d}{2}\Big(1-\frac{1}{\p}\Big)>\frac{d}{4}=\lim_{n\to\infty}-\frac 1{\log n}\log (I_n^{\beta,2})^{1/2}.
\end{align*}

Note that the conjectured rate of decay $\max_x\mu_{\omega,n}^\beta(X_n=x)\approx n^{-\frac d2(1-\frac 1{\p})+o(1)}$ is slower than $n^{-d/2}$, which is the corresponding rate for simple random walk. This is a reflection of the fact that while the polymer measure behaves like the simple random walk macroscopically due to Theorem~\ref{thmx:A}(i), the microscopic structure is quite different and governed by local fluctuations in the environment. Since the techniques for lower bounds are quite different, we leave this direction of research for future work.

\section{Auxiliary results}\label{sec:aux}

Let $P^{\otimes 2,\llambda}$ denote the law of two independent random walks $(X_n)_{n\in\N}$ and $(X_n')_{n\in\N}$ with marginals $P^\llambda$. We write $P^{\llambda,x}$ and $P^{\otimes 2,\llambda,(x,x')}$ to indicate the starting points and if $\mu$ is a probability measure on $\Z^d$, we write
\begin{align*}
P^{\llambda,\mu}=\sum_x\mu(x)P^{\llambda,x}\qquad\text{ and }\qquad P^{\otimes 2,\llambda,\mu}=\sum_{x,x'}\mu(x)\mu(x')P^{\otimes 2,\llambda,(x,x')}.
\end{align*}

\begin{lemma}\label{lem:core}
	Let $p>1+\frac 2d$. There exists $C>0$ such that, for all $\mu\in\M_1(\Z^d)$ and $\llambda\in[-1,1]^d$,
	\begin{align}
		&\sum_{t\in\N,z\in\Z^d}P^{\otimes 2,\llambda,\mu}\big(X_s\neq X_s'\text{ for }s=0,\dots,t-1,X_t=X_t'=z\big)^{p/2} \leq C\max_x \mu(x)^{p-(1+2/d)}.\label{eq:core}
	\end{align}
	Moreover, for all $T\in\N$ it holds that
	\begin{align}
	&\sum_{t\geq T,z\in\Z^d}P^{\otimes 2,\llambda,\mu}\big(X_s\neq X_s'\text{ for }s=0,\dots,t-1,X_t=X_t'=z\big)^{p/2}\leq CT^{-\frac d2(p-1)+1}.\label{eq:coreT}
	\end{align}
\end{lemma}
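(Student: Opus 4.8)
The plan is to reduce both bounds to estimates on the local-time structure of the difference walk $X - X'$. First I would observe that under $P^{\otimes 2, \llambda, \mu}$, the difference $D_s := X_s - X_s'$ is itself a random walk on $\Z^d$ started from $x - x'$ with increments that are the difference of two independent copies of the $P^\llambda$-step distribution; the event in question is $\{D_s \neq 0 \text{ for } s < t,\ D_t = 0,\ X_t = z\}$. Summing over $z$ first, I get $P^{\otimes 2,\llambda,\mu}(D_s \neq 0 \text{ for } s<t, D_t = 0)$, which by conditioning on the starting points is $\sum_{x,x'} \mu(x)\mu(x') \P(\text{first return of } D \text{ to } 0 \text{ at time } t \mid D_0 = x-x')$. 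By the drift bound $|\m(\llambda)| \le C|\llambda|$ for $\llambda \in [-1,1]^d$, the difference walk $D$ has bounded drift, so its local CLT and Green's function estimates are uniform in $\llambda$: the $n$-step return probability is $O(n^{-d/2})$ uniformly, and the Green's function of $D$ away from $0$ is uniformly bounded.

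For \eqref{eq:core}, I would split the sum over $t$ at a threshold $t_0 = \max_x \mu(x)^{-2/d}$. For $t \le t_0$ I would bound the first-return probability from a generic start crudely — e.g. by $\max_x \mu(x)$ times a combinatorial factor, using that $\sum_{x} \mu(x)^2 \le \max_x \mu(x)$ — and apply \eqref{eq:sa} with $\theta = p/2 \le 1$; the number of terms is $\le t_0$ and each contributes $(\max_x \mu(x))^{p/2}$ times a harmless factor, but one must be careful because $(p/2)\cdot 1$ alone does not give the claimed exponent. The cleaner route, which I would actually follow, is: for every fixed $t$, use that $P^{\otimes 2}(D_s \neq 0, s<t, D_t=0)^{p/2} = P^{\otimes 2}(\cdots)^{p/2-1} \cdot P^{\otimes 2}(\cdots)$ and bound the first factor via the uniform upper bound $P^{\otimes 2}(D_t = 0) \le C t^{-d/2}$ and the second factor is summed over $t$ to give the expected number of returns, which by the Green's function bound is $\le C \sum_{x,x'}\mu(x)\mu(x') g(x-x')$. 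Since $g(u) \asymp |u|^{-(d-2)} \wedge 1$ (uniformly in $\llambda$), the expected-return bound is $\le C \max_x \mu(x)^{\,?}$; interpolating the $t^{-d/2(p/2-1)}$ weight against the Green's function sum and optimizing gives the exponent $p - (1+2/d)$. I would carry out this interpolation explicitly with Hölder in the $t$-sum.

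For \eqref{eq:coreT} the argument is the same but now $\mu$ plays no role: I take $\mu = \delta$ effectively (the worst case, since restricting to $t \ge T$ only helps), bound $P^{\otimes 2}(D_s \neq 0, s<t, D_t = 0) \le P^{\otimes 2}(D_t = 0) \le C t^{-d/2}$, raise to the power $p/2$, and sum over $t \ge T$: $\sum_{t \ge T} t^{-dp/4} \le C T^{1 - dp/4}$. Hmm — the claimed exponent is $-d/2(p-1)+1 = 1 - d(p-1)/2$, not $1 - dp/4$, so the naive bound is not sharp enough when $p < 2$; I need to use the first-return (not just return) probability decay, which is $O(t^{-1-d/2})$-type for transient walks, or more precisely use $\sum_z P^{\otimes 2}(\text{first return at } t)^{p/2}$ with the first-return probability from a point bounded using the renewal structure, giving an extra saving. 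Concretely, writing $f_t$ for the first-return probability, one has $f_t \le C t^{-d/2}$ AND $\sum_t f_t \le C < 1$ (transience), and $\sum_{t\ge T} f_t^{p/2} \le (\sup_{t \ge T} f_t)^{p/2 - 1}\sum_t f_t \le C T^{-(d/2)(p/2-1)}$ — still not matching. The resolution is that one must also use the spatial sum: $\sum_z P(\cdots, D_t = 0, X_t = z)^{p/2}$ with $z$ ranging over $O(t^{d/2})$ effective sites each of probability $O(t^{-d}) \cdot(\text{first-return from }0)$...

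\textbf{Main obstacle.} The delicate point — and the step I expect to be the crux — is extracting exactly the exponent $p-(1+2/d)$ in \eqref{eq:core} and $1 - \tfrac{d}{2}(p-1)$ in \eqref{eq:coreT} rather than the weaker exponents a naive bound produces. Both require simultaneously exploiting (a) the spatial sum over $z$, which when combined with the subadditivity \eqref{eq:sa} at exponent $p/2 < 1$ is what produces the threshold $d/2 \cdot(p/2) > 1 \iff p > 4/d$, strengthened to $p > 1+2/d$ by also using one power of the genuine (summable) return count, and (b) uniformity in $\llambda$, which needs the local CLT and Green's function bounds for bounded-drift walks with constants independent of the drift. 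I would set up a single interpolation lemma handling sums of the form $\sum_{t,z} P(D_t = 0, X_t = z, \text{first visit})^{p/2}$ via Hölder against the two exponents $t^{-d/2}$ (per-step return decay) and the Green's function, and then both \eqref{eq:core} and \eqref{eq:coreT} follow by choosing the starting measure and the truncation appropriately.
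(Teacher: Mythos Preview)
Your very first move---``summing over $z$ first''---is not valid. The statement asks for $\sum_{t,z}[\,\cdot\,]^{p/2}$ with $p/2\le 1$, and subadditivity \eqref{eq:sa} gives $\sum_z a_z^{p/2}\ge(\sum_z a_z)^{p/2}$, i.e., the inequality points the wrong way. So collapsing the $z$-sum to the difference-walk event $\{D_t=0\}$ throws away exactly the information that produces the claimed exponents, and this is why every subsequent computation you attempt (the $t^{-dp/4}$, the Green's function interpolation, the first-return renewal) comes out either too weak or mismatched.

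The paper's argument is much shorter and avoids all renewal/Green's function machinery. Drop the first-visit constraint and use independence:
\[
P^{\otimes 2,\llambda,\mu}(X_t=X_t'=z)=\big((\mu*P_t^{\llambda})(z)\big)^2,
\]
so the $z$-sum of the $(p/2)$-th powers is \emph{exactly} $\|\mu*P_t^{\llambda}\|_p^p$. Young's convolution inequality then gives the two-sided bound
\[
\|\mu*P_t^{\llambda}\|_p^p\le\min\{\|P_t^{\llambda}\|_p^p,\ \|\mu\|_p^p\},
\]
with $\|P_t^{\llambda}\|_p^p\le \max_x P_t^{\llambda}(x)^{p-1}\le Ct^{-\frac d2(p-1)}$ by the local limit theorem (this is where uniformity in $\llambda$ enters, via Theorem~\ref{thmx:local}) and $\|\mu\|_p^p\le\max_x\mu(x)^{p-1}$. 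Summing $t^{-\frac d2(p-1)}$ over $t\ge T$ gives \eqref{eq:coreT} directly---note the exponent is $\tfrac d2(p-1)$, not $dp/4$, precisely because the spatial sum was kept. For \eqref{eq:core}, use the $\|\mu\|_p^p$ bound for $t\le T$ and the $\|P_t^{\llambda}\|_p^p$ bound for $t>T$, then optimize $T=(\max_x\mu(x))^{-2/d}$; both pieces give $\max_x\mu(x)^{p-(1+2/d)}$. No first-return probabilities, no Green's function, no H\"older interpolation in $t$ are needed.
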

\begin{proof}[Proof of Lemma~\ref{lem:core}]
We bound
\begin{align*}
&	P^{\otimes 2,\llambda,\mu}(X_s\neq X_s'\text{ for all }s=0,\dots,s-1,X_t=X_t'=z)^{p/2}\\
&\quad \leq P^{\otimes2,\llambda,\mu}(X_t=X_t'=z)^{p/2}\\
&\quad =((\mu * P_t^{\llambda})(z))^p,
\end{align*}
where $P_t^{\llambda}(x)=P^{\llambda}(X_t=x)$ and ``$*$'' denotes the convolution of two functions $\Z^d\to\R$. Writing $\|f\|_r=(\sum_{x\in\Z^d} |f(x)|^r)^{1/r}$, Young's convolution inequality now gives
\begin{align}\label{eq:cases}
	\|\mu *P_t^{\llambda}\|_p^p \leq\min\big\{\|\mu\|_1^p\|P_t^{\llambda}\|_p^p,\|\mu\|_p^p\|P_t^{\llambda}\|_1^p\big\} =\min\big\{\|P_t^{\llambda}\|_p^p,\|\mu\|_p^p\big\},
\end{align}
where we used that both $\mu$ and $P_t^\llambda$ are probability measures\footnote{As pointed out by an anonymous referee, \eqref{eq:cases} also follows from Jensen's inequality.}. By the local limit theorem for $P^\llambda$, Theorem~\ref{thmx:local} in the appendix, there exists $C>0$ such that, for all $\llambda\in[-1,1]^d$,
\begin{align}\label{eq:summable}
	\|P_t^{\llambda}\|_p^p=\sum_x P^{\llambda}_t(x)^p\leq \max_x P_t^{\llambda}(x)^{p-1}\sum_x P_t^{\llambda}(x)=\max_x P_t^{\llambda}(x)^{p-1}\leq Ct^{-\frac d2(p-1)}.
\end{align}
By assumption, $\frac d2(p-1)>1$, so this bound is summable and \eqref{eq:coreT} follows. For \eqref{eq:core}, we fix $T\in\N$ and use the bound $\|\mu\|_p^p$ from \eqref{eq:cases} for $t\leq T$ and the bound $\|P_t^\llambda\|_p^p$ for $t>T$, which gives
\begin{align*}
	&\sum_{t\in\N,z\in\Z^d} P^{\otimes 2,\llambda,\mu}\left(X_s\neq X_s'\text{ for all }s=0,\dots,s-1,X_t=X_t'=z\right)^{p/2}\\
	&\quad\leq T\max_x\mu(x)^{p-1}+CT^{-\frac d2(p-1)+1}.
\end{align*}
We have used that $\|\mu\|_p^p\leq \max_x \mu(x)^{p-1}$. Inserting $T=(\max_x \mu(x))^{-2/d}$ now gives \eqref{eq:core}.
\end{proof}

Next, we  obtain a bound for a certain joint partition function. Similar to \eqref{eq:def_joint_restricted}, let
\begin{align*}
	W^{\otimes 2,\beta,\llambda,(x,x')}_{n}[\1_B]=E^{\otimes 2,\llambda,(x,x')}\big[e^{\beta H_{(0,n]}(\omega,X)+\beta H_{(0,n]}(\omega,X')-2n\lambda(\beta)}\1_B\big].
\end{align*}

\begin{lemma}\label{lem:core2}Assume $\p(\beta)>1+2/d$ and let $p\in(1+2/d,\p(\beta)\wedge 2)$. For any $\eps>0$ there exist $\beta_0>\beta$, $\lambda_0=\lambda_0(\beta)>0$ and $T\in\N$ such that, for all  $\beta'\in[\beta,\beta_0]$ and $\llambda\in[-\lambda_0,\lambda_0]^d$,
	\begin{align}
		\sum_{t\geq T,z\in\Z^d}\E\Big[W_t^{\otimes 2,\beta',\llambda}\big[\1_{X_s\neq X_s'\text{ for all }s=T,\dots,t-1,X_t=X_t'=z}\big]^{p/2}\Big]\leq \eps\label{eq:first}.
	\end{align}
	Furthermore, there exists $C>0$ such that, for all $n\geq T$, $\beta'\in[\beta,\beta_0]$ and $\llambda\in[-\lambda_0,\lambda_0]^d$,
	\begin{align}
		\sum_{t\geq n,z\in\Z^d}\E\Big[W_t^{\otimes 2,\beta',\llambda}\big[\1_{X_s\neq X_s'\text{ for all }s=T,\dots,t-1,X_t=X_t'=z}\big]^{p/2}\Big]\leq Cn^{-(p-(1+2/d))}\label{eq:second}.
	\end{align}
\end{lemma}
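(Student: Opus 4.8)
The plan is to reduce the statement about joint partition functions to the purely random-walk estimates of Lemma~\ref{lem:core} by integrating out the environment, at the cost of a constant that stays bounded as $\beta'\to\beta$ and $\llambda\to\0$. First I would observe that the event $\{X_s\neq X_s'\text{ for }s=T,\dots,t-1\}$ forces the two polymers to be at distinct sites on the time interval $(T,t)$, so the environment weights $e^{\beta'\omega_{i,X_i}}$ and $e^{\beta'\omega_{i,X_i'}}$ attached to the two paths at times $i\in(T,t)$ involve disjoint space-time points. Hence, conditioning on the random-walk trajectories and taking expectation over $\omega$ on that interval, the environment on $(T,t)$ contributes exactly a factor $1$ after normalization by $e^{-2(t-T)\lambda(\beta')}$ — this is the familiar second-moment cancellation, which here works not because we are in $L^2$ but because the two paths are \emph{disjoint}. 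The remaining environment, namely times $\{1,\dots,T\}$ and $\{t\}$, is handled by Hölder: since there are only finitely many sites ($O(T)$ of them) and the exponent $p/2\le 1$, \eqref{eq:sa} plus \eqref{eq:exp_mom} gives a finite multiplicative constant $C_T=C_T(\beta_0)$, uniform in $\llambda$ and in $\beta'\in[\beta,\beta_0]$ provided $\beta_0$ is finite. The upshot is a bound
\[
\sum_{t\ge n,z}\E\big[W_t^{\otimes 2,\beta',\llambda}[\ind_{\cdots}]^{p/2}\big]\le C_T\sum_{t\ge n,z}P^{\otimes2,\llambda,\delta_0\ast P^\llambda_T}\big(X_s\neq X_s'\text{ for }s=T,\dots,t-1,\,X_t=X_t'=z\big)^{p/2},
\]
where the starting measure $\mu=\delta_0\ast P^\llambda_T$ is the law of $X_T$; one then applies \eqref{eq:coreT} (shifting time by $T$) to get the $n^{-d/2(p-1)+1}=n^{-(p-(1+2/d))}$ decay, which is \eqref{eq:second}, and for \eqref{eq:first} one uses the same bound at $n=T$ together with $\max_x P^\llambda_T(x)\le Ct^{-d/2}$-type decay, or simply notes that \eqref{eq:coreT} at $n=T$ tends to $0$ as $T\to\infty$, so choosing $T$ large makes the right-hand side $\le\eps$.

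The one subtlety I need to be careful about is that the constant $C_T$ genuinely does not depend on $\beta'$ or $\llambda$ in the stated ranges. For $\llambda$ this is immediate since the environment expectation has nothing to do with the walk. For $\beta'$, the point is that $\lambda(\beta')$ is continuous and $\E[e^{\beta'|\omega|}]$ is uniformly bounded for $\beta'\le\beta_0<\infty$ by \eqref{eq:exp_mom}, so the finite product over the $O(T)$ boundary sites is controlled uniformly once $\beta_0$ is fixed; crucially $T$ is chosen \emph{first} (depending on $\eps$ via the random-walk estimate), and only afterwards do we fix $\beta_0$ and $\lambda_0$, so there is no circularity. I should also double-check the bookkeeping of which endpoint environments are included — the definition of $W^{\otimes 2,\beta',\llambda}_t$ includes times $(0,t]$ for both walks — but as the paper stresses after \eqref{eq:backward}, including or excluding a bounded number of boundary times only changes things by a uniformly bounded factor, so it is harmless.

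The main obstacle, such as it is, is purely organizational: making the disjointness-based environment cancellation on the middle interval completely rigorous when the two endpoints $X_t=X_t'=z$ \emph{coincide} and the site $z$ at time $t$ is shared by both paths. The clean fix is to split the environment at time $t$ off from the middle interval: integrate $\omega$ over $(T,t)$ first (paths disjoint there, so exact cancellation), then bound the single shared factor $e^{2\beta'\omega_{t,z}-2\lambda(\beta')}$ at time $t$ using \eqref{eq:sa} with exponent $p/2$ and $\E[e^{p\beta'\omega}]<\infty$. After this the rest is a mechanical application of Lemma~\ref{lem:core} with $p\in(1+2/d,2]$ — note $p<\p(\beta)$ is not even needed for this lemma, only $p\le 2$ and $p>1+2/d$, which is exactly the hypothesis of Lemma~\ref{lem:core}.
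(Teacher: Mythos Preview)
Your argument for \eqref{eq:second} is essentially fine: with $T$ fixed, the environment on $(0,T]\cup\{t\}$ contributes only a finite constant $C_T$, and Lemma~\ref{lem:core} then gives the polynomial decay in $n$. The gap is in \eqref{eq:first}. You propose to make the right-hand side small by taking $T$ large, relying on the $T^{-d(p-1)/2+1}$ decay from \eqref{eq:coreT}. But your constant $C_T$ is not harmless in $T$: after Jensen, the environment on $(0,T-1]$ produces the factor $E^{\otimes 2,\llambda}\big[\eta^{N_{T-1}}\1_A\big]$ with $\eta=e^{\lambda(2\beta')-2\lambda(\beta')}$ and $N_{T-1}$ the number of collisions of $X,X'$ on $(0,T-1]$. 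Any bound that separates this from the event $A$ (crude sup-bound, H\"older, or sub-additivity over $(u,u')=(X_{T-1},X'_{T-1})$) is controlled by $E^{\otimes 2,\llambda}[\eta^{qN_{T-1}}]$ for some $q\geq 1$, and for $\beta\geq\beta_{cr}^{L^2}$ this diverges as $T\to\infty$. So the product $C_T\cdot T^{-c}$ does not tend to zero and the argument breaks down precisely in the regime the lemma is designed for.

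The paper's proof avoids this by \emph{not} integrating out the environment on $(0,T-1]$. After the conditional Jensen step it writes
\[
W_{T-1}^{\otimes 2,\beta',\llambda}[\1_A]^{p/2}=(W^{\beta',\llambda}_{T-1})^{p}\,P^{\otimes 2,\llambda,\mu_{\omega,T-1}^{\beta',\llambda}}(A)^{p/2},
\]
so that the starting measure for Lemma~\ref{lem:core} is the \emph{polymer} endpoint law $\mu_{\omega,T-1}^{\beta',\llambda}(X_T\in\cdot)$ rather than $P^\llambda_T$. Summing over $t,z$ then yields $(W^{\beta',\llambda}_{T-1})^{p}\max_x\mu_{\omega,T-1}^{\beta',\llambda}(x)^{p-(1+2/d)}$, and one concludes by H\"older: the first factor is bounded in $L^{1+\delta}$ because $p(1+\delta)<\p(\beta)$, and the second factor tends to zero by the delocalization in Theorem~\ref{thmx:C}(i). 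This is exactly where the hypothesis $p<\p(\beta)$ enters, contrary to your final remark that it is not needed.
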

\begin{proof}
Using the conditional Jensen inequality, we get for any $\beta'\in\R_+$ and $\llambda\in\R^d$,
\begin{align*}
&\E\left[W_t^{\otimes 2,\beta',\llambda}\big[\1_{X_s\neq X_s'\text{ for all }s=T,\dots,t-1,X_t=X_t'=z}\big]^{p/2}\right]\\
&\leq \E\left[\E\left[W_t^{\otimes 2,\beta',\llambda}\big[\1_{X_s\neq X_s'\text{ for all }s=T,\dots,t-1,X_t=X_t'=z}\big]\Big|\F_{T-1}\right]^{p/2}\right]\\
&=c\E\left[W_{T-1}^{\otimes 2,\beta',\llambda}\left[\1_{X_s\neq X_s'\text{ for all }s=T,\dots,t-1,X_t=X_t'=z}\right]^{p/2}\right]\\
&=c\E\Big[\big(W_{T-1}^{\beta',\llambda}\big)^{p}P^{\otimes 2,\llambda,\mu_{\omega,T-1}^{\beta',\llambda}}\big(X_s\neq X_s'\text{ for all }s=0,\dots,t-T-1,X_{t-T}=X_{t-T}'=z\big)^{p/2}\Big],
\end{align*}
where $c=e^{\lambda(p\beta')-p\lambda(\beta')}$ and $\mu_{\omega,T-1}^{\beta',\llambda}(\cdot)=\mu_{\omega,T-1}^{\beta',\llambda}(X_T\in\cdot)$ is the biased polymer measure, defined similarly to \eqref{eq:mu} by
\begin{align*}
	\mu_{\omega,T-1}^{\beta',\llambda}(X_{T}\in A)=\frac{E^{\llambda}[e^{\beta' H_{T-1}(\omega,X)-(T-1)\lambda(\beta')}\1_{X_{T}\in A}]}{W^{\beta',\llambda}_{T-1}}.
\end{align*}Summing over $t$ and $z$ and applying Lemma~\ref{lem:core}, we get
\begin{align}
	&\sum_{t\geq T,z\in\Z^d}\E\left[W^{\otimes2,\beta',\llambda}_{t}\left[\1_{X_s\neq X_s'\text{ for all }s=T,\dots,t-1,X_t=X_t'=z}\right]^{p/2}\right]\label{eq:herehere}\\
	&\quad\leq Ce^{\lambda(p\beta')-p\lambda(\beta')}\E\Big[\big(W^{\beta',\llambda}_{T-1}\big)^{p} \max_x \mu^{\beta',\llambda}_{\omega,T-1}(x)^{p-(1-2/d)}\Big].\label{eq:here}
\end{align}
We first consider this equation for $\beta'=\beta$ and $\llambda=\0$. Let $\delta>0$ be small enough that $p(1+\delta)<\p(\beta)$ and let $q$ be the H\"older dual of $1+\delta$, so that
\begin{align*}
&	\E\Big[\big(W^{\beta,\0}_{T-1}\big)^p \sup_x \mu^{\beta,\0}_{\omega,T-1}(X_T=x)^{p-(1-2/d)}\Big]\\
&\leq \E\Big[\big(W^{\beta,\0}_{T-1}\big)^{p(1+\delta)}\Big]^{1/(1+\delta)}\E\Big[ \sup_x \mu^{\beta,\0}_{\omega,T-1}(X_T=x)^{q(p-(1-2/d))}\Big]^{1/q}.
\end{align*}
By assumption, the first factor is bounded in $T$. Moreover, $\sup_x\mu_{\omega,T-1}^{\beta,\0}(X_T=x)\leq (I_{T-1}^{\beta,2})^{1/2}$ (recall \eqref{eq:def_I}) and thus the supremum in the second expectation converges to zero almost surely by Theorem~\ref{thmx:B}(i). Since $I_{T-1}^{\beta,2}\leq 1$, the convergence to zero also holds in $L^q$, for any $q\in[1,\infty)$. Thus we find $T\in\N$ such that \eqref{eq:here} is bounded by $\frac{\eps}{2}$ for $\beta'=\beta$ and $\llambda=\0$. Note that the quantity inside the expectation in \eqref{eq:here} is a continuous function of $\beta'$ and $\llambda$  (since the time-horizon $T-1$ is fixed and the random walk has finite range). Thus, by the dominated convergence theorem, we can choose $\beta_0$ and $\lambda_0$ small enough that the right-hand side of \eqref{eq:here} is bounded by $\eps$ for all $\beta'\in[\beta,\beta_0]$ and $\llambda\in[-\lambda_0,\lambda_0]^d$, as desired.

\smallskip Finally, with the same values of $T$, $\beta_0$ and $\lambda_0$ but  applying \eqref{eq:coreT} instead of \eqref{eq:core} in \eqref{eq:herehere}, we get
\begin{align*}
	&\sum_{t\geq n,z\in\Z^d}\E\left[W^{\otimes2,\beta',\llambda}_{t}\big[\1_{X_s\neq X_s'\text{ for all }s=T,\dots,t-1,X_t=X_t'=z}\big]^{p/2}\right]\\
	&\quad\leq Ce^{\lambda(p\beta')-p\lambda(\beta')}n^{-(p-(1+2/d))}\E\Big[\big(W^{\beta',\llambda}_{T-1}\big)^{p} \Big].
\end{align*}
The claim follows since the quantities in the final line are bounded in $\beta'\in[\beta,\beta_0]$ and $\llambda\in[-\lambda_0,\lambda_0]^d$, for fixed $T$. \end{proof}

In the next lemma, we consider an expectation with respect to the random walk bridge between times $0$ and $n$ where the integrand does not depend on what happens in the ``middle''. We show that, in this case, the expectation can be factorized into two expectations with respect to the simple random walk. To make this precise, we write $\G_I\coloneqq \sigma(X_t\colon i\in I\cap\N)$ and $\G_I^{\otimes 2}\coloneqq \sigma(X_t,X_t'\colon t\in I\cap \N)$ for the filtration of the walks. We further write $f\in\G_I$ if $f$ is a $\G_I$-measurable real-valued function, and similarly for $f\in\G^{\otimes 2}_I$.

\begin{lemma}\label{lem:lclt}
	\begin{itemize}
		\item[(i)]For all $\eps\in(0,\frac{1}{2})$ and $M>1$, there exists $C=C_{\eps,M}>0$ such that, for all $n\in\N$ and $\llambda\in[-1,1]^d$,
\begin{align*}
	\sup_{\substack{x,x',y,y'\in\Z^d\\(0,x)\leftrightarrow(n,y),(0,x')\leftrightarrow(n,y')\\|x-y-n\m(\llambda)|\leq Mn^{1/2}\\|x'-y'-n\m(\llambda)|\leq Mn^{1/2}}}\sup_{\substack{0\leq s\leq t\leq n\\t-s\geq \eps n}}\sup_{\substack{f\in\G^{\otimes 2}_{[1,s]}\\g\in\G^{\otimes 2}_{[t,n]}}}\frac{E^{\otimes 2,\llambda,(x,x'),(y,y')}_{0,n}[fg]}{E^{\otimes 2,\llambda,(x,x'),\star}_{0,n}[f]E^{\otimes 2,\llambda,\star,(y,y')}_{0,n}[g]}\leq C,
\end{align*}
\item[(ii)] For every $\eps\in(0,\frac{1}{4})$, there exists a non-negative sequence $(a_n)_{n\in\N}$ with $\lim_{n\to\infty}a_n=0$ such that, for all $n\in\N$,
	\begin{align}\label{eq:display}
\sup_{\substack{x,y\in\Z^d\\(0,x)\leftrightarrow(n,y)\\|x-y|\leq n^{3/4-\eps}}}\sup_{\substack{s\leq n^{{1}/{4}-\eps}\\t\geq n-n^{{1}/{4}-\eps}}}\sup_{f\in\G_{[1,s]},g\in\G_{[t,n]}}\Big|\frac{E^{x,y}_{0,n}[fg]}{E^{x,\star}_{0,n}[f]E^{\star,y}_{0,n}[g]}-1\Big|\leq a_n.
\end{align}\end{itemize}
\end{lemma}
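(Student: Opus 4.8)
The plan is to reduce both statements to the classical local central limit theorem (LCLT) for the simple random walk, since the only randomness is the walk(s) and the assertions are purely combinatorial identities about bridge probabilities. For part~(i), the key observation is that for $f\in\G^{\otimes 2}_{[0,s]}$ and $g\in\G^{\otimes 2}_{[t,n]}$ the numerator factorizes as a sum over the intermediate configurations at times $s$ and $t$,
\begin{align*}
E^{\otimes 2,(x,x'),(y,y')}_{0,n}[fg]=\sum_{\substack{u,u'\\v,v'}}E^{\otimes 2,(x,x'),\star}_{0,s}\big[f\1_{X_s=u,X_s'=u'}\big]\,\frac{q_{t-s}(v-u)q_{t-s}(v'-u')}{q_n(y-x)q_n(y'-x')}\,E^{\otimes 2,\star,(y,y')}_{t,n}\big[g\1_{X_t=v,X_t'=v'}\big],
\end{align*}
where $q_k(\cdot)=P(X_k=\cdot)$ (with the obvious parity caveat). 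Dividing by the analogous product of one-sided expectations, the ratio becomes a weighted average of the kernel ratios $\frac{q_{t-s}(v-u)q_{t-s}(v'-u')q_n(n\m(\llambda))^2}{q_n(y-x)q_n(y'-x')q_{t-s}(\cdot)\cdots}$; I would not write this out but instead note that it suffices to bound the ratio of transition kernels uniformly. The Gaussian upper and lower bounds (LCLT with uniform error, valid also for the drifted walk $P^\llambda$ with $\llambda\in[-1,1]^d$ by a tilting/Cramér argument) give $q_k(w)\asymp k^{-d/2}$ whenever $|w|\lesssim k^{1/2}$; here $t-s\geq\eps n$ and $n-t, s$ contribute the relevant length scales, while the constraints $|x-y-n\m(\llambda)|\le Mn^{1/2}$ keep all arguments in the Gaussian-window regime. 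Assembling these bounds yields the uniform constant $C=C(\eps,M,d)$.

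For part~(ii) the mechanism is the same but one now needs the \emph{ratio to converge to $1$}, so rough two-sided bounds are not enough — one needs the sharp asymptotic $q_k(w)=(C_d k^{-d/2}+o(k^{-d/2}))e^{-|w|^2/(2k)}(1+o(1))$ uniformly for $|w|=O(k^{3/4-\eps'})$, i.e.\ a local CLT with control in the moderate-deviation window. With $s\le n^{1/4-\eps}$, $n-t\le n^{1/4-\eps}$ and $|x-y|\le n^{3/4-\eps}$, the displacements over the middle block $[s,t]$ differ from those over $[0,n]$ by $O(n^{1/4-\eps})$, which is $o(n^{1/2})$ so the Gaussian factors match up to $1+o(1)$; and the time lengths differ by $O(n^{1/4-\eps})=o(n)$, so the prefactors $k^{-d/2}$ match up to $1+o(1)$ as well. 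Hence each kernel ratio appearing in the weighted average is $1+a_n$ with $a_n\to0$ depending only on $n$ (uniformly over the displayed suprema), and since a weighted average of quantities in $[1-a_n,1+a_n]$ again lies in that interval, \eqref{eq:display} follows. I would package the needed LCLT as a citation (e.g.\ Lawler--Limic or Spitzer) rather than reprove it.

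The main obstacle is the uniformity, on two fronts. First, in part~(i) the supremum is over \emph{all} bounded $f,g$ measurable with respect to the endpoint-segment filtrations, so one really must pass through the kernel-ratio reformulation above and cannot argue distributionally; the bookkeeping of which walk-endpoints are summed and which kernel ratios survive after dividing by $E^{\otimes 2,\llambda,(x,x'),\star}_{0,n}[f]E^{\otimes 2,\llambda,\star,(y,y')}_{0,n}[g]$ needs care (in particular one should check the one-sided denominators are themselves bounded below by the same kernel estimates, so no division-by-small-number issue arises in the regime $t-s\ge\eps n$). Second, the $\llambda$-dependence in part~(i): one needs the LCLT for $P^\llambda$ with constants uniform over the compact set $[-1,1]^d$, which follows from the standard proof (characteristic-function/saddle-point) since all relevant quantities ($\m(\llambda)$, the covariance of $P^\llambda$, the smoothness of $\varphi$) depend continuously on $\llambda$ on that compact set; I would state this uniformity explicitly as the one nontrivial input and then the rest is assembling Gaussian bounds.
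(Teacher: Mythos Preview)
Your approach is essentially the same as the paper's: decompose by the positions at times $s$ and $t$, use the Markov property to factor, and reduce everything to a uniform bound (part~(i)) or a uniform $1+o(1)$ estimate (part~(ii)) on the ratio $\frac{P^\llambda(X_{t-s}=z_2-z_1)}{P^\llambda(X_n=y-x)}$, invoking the local limit theorem (the paper packages this as Theorem~\ref{thmx:local} and, for part~(ii), \cite[Theorem 2.3.11]{LL10}). One small clarification: in part~(i) the intermediate displacement $z_2-z_1$ is \emph{not} constrained to the Gaussian window, so you should use the unconstrained upper bound $\sup_w P^\llambda(X_{t-s}=w)\le C(t-s)^{-d/2}\le C(\eps n)^{-d/2}$ for the numerator and the Gaussian-window lower bound only for the denominator $P^\llambda(X_n=y-x)$; the paper also tilts to $P^\llambda$ \emph{before} factoring (using that the bridge law is drift-invariant), which makes the match with the drifted denominators $E^{\otimes 2,\llambda,\cdot,\star}$ immediate.
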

\begin{proof}
We start with \textbf{part (i)}: for any $z_1,z_2,z_1',z_2'\in\Z^d$, we have
\begin{align*}
	&E^{\otimes 2,(x,x'),(y,y')}_{0,n}[fg\1_{(X_s,X_s')=(z_1,z_1')}\1_{(X_t,X_t')=(z_2,z_2')}]\notag\\
	&=\frac{E^{\otimes 2,(x,x'),\star}_{0,n}[fg\1_{(X_s,X_s')=(z_1,z_1')}\1_{(X_t,X_t')=(z_2,z_2')}\1_{(X_n,X_n')=(y,y')}]}{P^{\otimes 2}((X_n,X_n')=(y-x,y'-x'))}\notag\\
	&=\frac{E^{\otimes 2,(x,x'),\star}_{0,n}[fg\1_{(X_s,X_s')=(z_1,z_1')}\1_{(X_t,X_t')=(z_2,z_2')}\1_{(X_n,X_n')=(y,y')}e^{\llambda(X_n-x)+\llambda(X_n'-x')-2n\varphi(\llambda)}]}{E^{\otimes 2}[\1_{(X_n,X_n')=(y-x,y'-x')}e^{\llambda X_n+\llambda X_n'-2n\varphi(\llambda)}]}\notag\\
	&=\frac{E^{\otimes 2,\llambda,(x,x'),\star}_{0,n}[fg\1_{(X_s,X_s')=(z_1,z_1')}\1_{(X_t,X_t')=(z_2,z_2')}\1_{(X_n,X_n')=(y,y')}]}{P^{\otimes 2,\llambda}((X_n,X_n')=(y-x,y'-x'))}\notag\\
	&=E^{\otimes 2,\llambda,(x,x'),\star}_{0,n}[f\1_{(X_s,X_s')=(z_1,z_1')}]E^{\otimes 2,\llambda,\star,(y,y')}_{0,n}[g\1_{(X_t,X_t')=(z_2,z_2')}]\notag\\
	&\qquad\times \frac{P^{\otimes 2,\llambda}((X_{t-s},X_{t-s}')=(z_2-z_1,z_2'-z_1'))}{P^{\otimes 2,\llambda}((X_n,X_n')=(y-x,y'-x'))}\\
	&\leq C E^{\otimes 2,\llambda,(x,x'),\star}_{0,n}[f\1_{(X_s,X_s')=(z_1,z_1')}]E^{\otimes 2,\llambda,\star,(y,y')}_{0,n}[g\1_{(X_t,X_t')=(z_2,z_2')}],\notag
\end{align*}
where the inequality is due to Theorem~\ref{thmx:local}. The claim follows by summing over $z_1,z_1',z_2,z_2'$.

\smallskip For \textbf{part (ii)}, by repeating the above calculation with a single random walk and $\llambda=\0$, we see that it is enough to show that, uniformly over all $x,y,s,t$ as in \eqref{eq:display} and all $z_1\in x+[-s^{\frac{1}{4}-\eps},s^{\frac{1}{4}-\eps}]^d,z_2\in y+[-(n-t)^{\frac{1}{4}-\eps},(n-t)^{\frac{1}{4}-\eps}]^d$,
\begin{align}\label{eq:cons}
	\frac{P(X_{t-s}=z_2-z_1)}{P(X_n=y-x)}=1+o(1)\qquad\text{ as }n\to\infty.
\end{align}
We will prove this with the help of a stronger version of the local limit theorem for the simple random walk, namely \cite[Theorem~2.3.11]{LL10}. Note that the result is not directly applicable to $P$ since the assumption ``$p\in\mathcal P_d$'' requires the random walk to be aperiodic. This can be rectified as follows: for even $n$, we can combine two steps into one and consider $Y_n\coloneqq X_{2n}/2$, which defines an irreducible and aperiodic random walk on $\Z^d$. Furthermore, by changing $s$ to $s+1$ and $t$ to $t-1$ if necessary, we can also assume $s$ and $t$ to be even. For odd $n$, we can decompose
\begin{align*}
	E^{x,y}_{0,n}[fg]&=\sum_{|y-y'|_1=1}P^{x,y}_{0,n}(X_{n-1}=y') E^{x,y'}_{0,n-1}[fg^{y'}],\\
	E^{\star,y}_{0,n}[f]&=\sum_{|y-y'|_1=1}P^{\star,y}_{0,n}(X_{n-1}=y')E^{\star,y'}_{0,n-1}[g^{y'}].
\end{align*}
with appropriately defined $g^{y'}\in\G_{[t,n-1]}$, $|y'-y|_1=1$.

Let now $p_t(x)\coloneqq \frac{1}{(2\pi n)^{d/2}}e^{-d|x|^2/2n}$ and note that, by \cite[Theorem~2.3.11]{LL10},
\begin{align*}
	P(X_{t-s}=z_2-z_1)&=p_{t-s}(z_2-z_1)e^{O(\frac{1}{t-s}+\frac{|z_2-z_1|^4}{(t-s)^3})},\\
	P(X_n=y-x)&=p_n(y-x)e^{O(\frac{1}{n}+\frac{|y-x|^4}{n^3})}
\end{align*}
Due to our assumptions on $x,y,z_1$ and $z_2$, the exponential terms  converge to one uniformly. Moreover, since
\begin{align*}
	|y-x|^2\Big|\frac{1}{n}-\frac{1}{t-s}\Big|&\leq
	%n^{2*3/4}n^{1/4-2}
	n^{-\frac 14-3\eps},\\
	\frac{1}{t-s}\big||y-x|^2-|z_2-z_1|^2\big|%&\leq2|y-x||y-x-(z_2-z_1)|+|y-x-(z_2-z_1)|^2\\
%				     &\leq 2n^{3/4-\eps}n^{1/4-\eps}+n^{2*1/4-2\eps}
				     &\leq 4n^{-2\eps},
\end{align*}
we see that
\begin{align*}
	\frac{p_{t-s}(z_2-z_1)}{p_n(y-x)}=\Big(\frac{n}{t-s}\Big)^{d/2}e^{d|y-x|^2(\frac{1}{2n}-\frac{1}{2(t-s)})}e^{\frac{d}{2(t-s)}(|y-x|^2-|z_2-z_1|^2)}
\end{align*}
also converges to one uniformly.
\end{proof}

\section{The decomposition of joint partition functions}\label{sec:chaos}

To implement the idea outlined in Section~\ref{sec:strat}, we introduce a sequence of stopping times for two paths $(X_n,X_n')_{n\in\N}$. See also Figure~\ref{fig:constr} for an illustration. For fixed $T\in\N$, let
	\begin{equation}\label{eq:decomposition}\begin{split}
		\tau_0&\coloneqq \inf\{k\geq 0\colon X_k=X_k'\},\\
		\tau_{k+1}&\coloneqq \inf\{k\geq  \tau_k+T\colon X_k=X_k'\}\qquad\text{ for } k\geq 0.
	\end{split}\end{equation}
Note that $\tau_0$ is not required to be larger than $T$, so for example $\tau_0=0$ if $X_0=X_0'$. Next, let
\begin{align}
	K_n&\coloneqq \inf\{k\colon \tau_k> n\},\label{eq:def_K}\\
	L_n&\coloneqq \#\{0< i< K_n\colon \tau_i-\tau_{i-1}\geq n^{1/2}\}.
\end{align}
That is, $K_n$ is the number of collisions (separated in time by at least $T$) and $L_n$ is the number of ``large'' gaps between collisions. The value $n^{1/2}$ in the definition of $L_n$ is arbitrary, we could have used any exponent in $(0,1)$.  Note that in the definition of $L_n$, the first interval $[0,\tau_0]$ and last interval $[\tau_{K_n-1},n]$ are not counted as ``large'' even if they are longer than $n^{1/2}$.

\begin{figure}[h]
	\includegraphics[width=.8\textwidth]{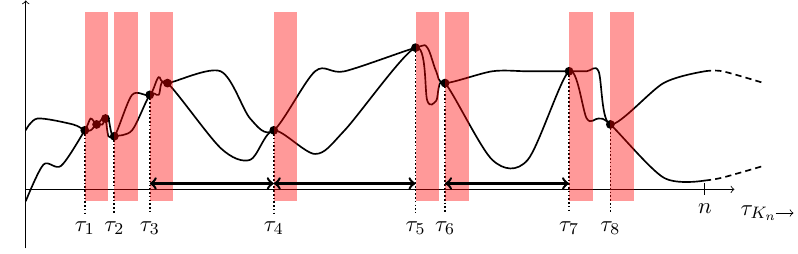}
\caption{Illustration of the collision times between two paths. The shaded area indicates the time intervals of the form $[\tau_k,\tau_k+T)$, during which any new collision will be ignored. In this case, we have $K_n=9$ and there are $L_n=3$ large intervals, which are marked by the bold arrows at the bottom.  \label{fig:constr}}
\end{figure}

\smallskip Note further that $\tau_0,\tau_1,\dots$ depend on $T$, which is chosen as follows:
\begin{lemma}\label{lem:chaos}
	Assume $\p(\beta)>1+2/d$ and let $p\in(1+2/d,\p(\beta)\wedge 2)$. Let $\lambda_0$, $\beta_0$ and $T$ be as in Lemma~\ref{lem:core2} with $\eps=\frac{1}{4}$. There exist $C,C'>0$ such that, for all $n\in\N$, $k,l\geq 1$, $\beta'\in[\beta,\beta_0]$, $\llambda\in[-\lambda_0,\lambda_0]^d$ and $\mu\in\M_1(\Z^d)$,
\begin{align}
\E\big[W^{\otimes 2,\beta',\llambda,\mu,\star}_{0,n}[\1_{K_n=k}]^{p/2}\big]&\leq C4^{-k}\max_x\mu(x)^{p-(1+2/d)},\label{eq:K}\\
\E\big[W^{\otimes 2,\beta',\llambda,\mu,\star}_{0,n}[\1_{L_n\geq l}]^{p/2}\big]&\leq C \left(C'n^{-(p-(1+2/d))/2}\right)^l.\label{eq:L}
\end{align}
In particular, it holds that
\begin{align}\label{eq:KK}
\sup_{n\in\N}\sup_{\llambda\in[-\lambda_0,\lambda_0]^d}\sup_{\beta'\in[\beta,\beta_0]}\E\big[(W^{\beta',\llambda}_n)^p\big]<\infty.
\end{align}
\end{lemma}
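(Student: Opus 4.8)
The plan is to use the stopping times $\tau_0 < \tau_1 < \dots$ to decompose the joint partition function along the number of well-separated collisions, and then estimate each block using Lemma~\ref{lem:core2} together with the sub-additive inequality \eqref{eq:sa}. For \eqref{eq:K}, I would write $W^{\otimes 2,\beta',\llambda,\mu,\star}_{n}[\1_{K_n=k}]$ as a sum over the possible collision locations $(\tau_0,z_0),(\tau_1,z_1),\dots,(\tau_{k-1},z_{k-1})$, i.e. over all choices of times $t_0 < t_1 < \dots < t_{k-1} \le n$ with $t_{j+1} \ge t_j + T$ and spatial points $z_j \in \Z^d$ where the $j$-th collision occurs. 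The strong Markov property factorizes the joint partition function over these blocks: between consecutive collisions the two walks must avoid each other (so that the new meeting is genuinely a fresh well-separated collision), and within the interval $[t_j, t_j+T)$ any incidental meeting is ignored. Raising to the power $p/2 \le 1$ and applying \eqref{eq:sa} to the outer sum over $(t_0,\dots,t_{k-1},z_0,\dots,z_{k-1})$ turns the $p/2$-th moment of the sum into a sum of $p/2$-th moments of products. Each such product, after taking expectations, factorizes (by the Markov property of the environment and the walks) into a product over the $k$ blocks; the first block contributes a factor controlled by $\max_x \mu(x)^{p-(1+2/d)}$ via the first assertion of Lemma~\ref{lem:core2} (or rather its single-collision analogue obtained from \eqref{eq:first} with $\eps = 1/4$), and each subsequent block contributes a factor $\sum_{t \ge T, z} \E[W_t^{\otimes 2}[\cdots]^{p/2}] \le 1/4$ by the choice $\eps = 1/4$ in Lemma~\ref{lem:core2}. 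This produces the geometric bound $C\, 4^{-k} \max_x \mu(x)^{p-(1+2/d)}$. The sum over $z_j$ at the endpoints of the intermediate blocks must be handled carefully, but it is absorbed into the $\sum_{t\ge T, z}$ appearing in \eqref{eq:first}; the spatial marginal at the start of each new block plays the role of $\mu$ in Lemma~\ref{lem:core}, and here one uses that after normalizing one always has a probability measure whose sup-norm is at most $1$, so the extra $\max_x(\cdot)^{p-(1+2/d)}$ factors are harmless (bounded by $1$).

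For \eqref{eq:L}, the argument is similar but now I would only keep track of the $l$ blocks whose length exceeds $n^{1/2}$, and sum freely over everything else. Concretely, on the event $\{L_n \ge l\}$ there exist indices $0 < i_1 < \dots < i_l < K_n$ with $\tau_{i_m} - \tau_{i_m - 1} \ge n^{1/2}$. Decomposing along these $l$ long gaps and again using \eqref{eq:sa} to pull the power $p/2$ inside the sum over the (finitely many choices of) gap positions and endpoints, each long gap contributes, by the tail estimate \eqref{eq:second} of Lemma~\ref{lem:core2} applied with $n$ replaced by $n^{1/2}$, a factor $C' (n^{1/2})^{-(p-(1+2/d))} = C' n^{-(p-(1+2/d))/2}$, while the remaining (short) blocks are summed using the $1/4$-bound as before, which only contributes an overall constant. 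The number of ways to insert the $l$ long gaps among at most $n$ blocks is at most $\binom{n}{l} \le n^l$, but this polynomial factor is dominated once $n$ is large; more cleanly, one should absorb the combinatorial factor into the geometric estimate by noting that the short-block factors already sum to a convergent series, so the net contribution of inserting one more long gap is exactly a factor $C' n^{-(p-(1+2/d))/2}$ times a bounded constant — this is the content of \eqref{eq:L}. Finally, \eqref{eq:KK} follows by combining the two estimates: writing $(W_n^{\beta',\llambda})^p = (W_n^{\otimes 2,\beta',\llambda})^{p/2}$ (taking $\mu = \delta_0$), we decompose $W_n^{\otimes 2,\beta',\llambda} = \sum_{k \ge 0} W_n^{\otimes 2}[\1_{K_n = k}]$, apply \eqref{eq:sa} once more, and sum the geometric series $\sum_k C\, 4^{-k} < \infty$; the $k=0$ term (no collisions) is bounded separately and trivially since then the two walks never meet and the environment integrates out to give $1$.

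The main obstacle I expect is the bookkeeping in the factorization step: making precise how the strong Markov property at the stopping times $\tau_j$ decomposes the joint partition function into independent (conditionally on the walks' positions) blocks, while correctly tracking which environment sites are included in which block — recall the excerpt emphasizes that one must be careful about whether the environment at initial/terminal times of each segment is counted. One must also verify that the "forbidden meeting during $[\tau_j, \tau_j + T)$" constraint is compatible with the block structure, i.e. that conditioning on $\{X_s \ne X_s' \text{ for } s \in [\tau_j + T, \tau_{j+1})\}$ inside block $j+1$ does not interfere with the independence across blocks. This is where the specific form of the stopping-time recursion \eqref{eq:decomposition} and the choice of $T$ from Lemma~\ref{lem:core2} are essential, and I would be careful to set up the conditional expectations so that Lemma~\ref{lem:core2} applies verbatim to each block after an appropriate time-shift. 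The spatial summations at block boundaries (which turn $\delta$-measures into the convolved measures $\mu * P_t^\llambda$ appearing in Lemma~\ref{lem:core}) are the other technical point, but these are exactly what \eqref{eq:core} and \eqref{eq:coreT} are designed to absorb.
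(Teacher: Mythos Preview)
Your proposal is essentially the paper's own proof: decompose along the collision data $(t_0,z_0),\dots,(t_{k-1},z_{k-1})$, apply \eqref{eq:sa}, factor by the Markov property, and bound each intermediate block by $\tfrac14$ via Lemma~\ref{lem:core2}. Two small corrections to your bookkeeping: the \emph{initial} block (before $\tau_0$) is handled not by Lemma~\ref{lem:core2} but by first applying Jensen to integrate out the environment on $[0,t_0)$ (the walks do not meet there, so the conditional expectation equals the probability) and then summing $P^{\otimes 2,\llambda,\mu}(\tau_0=t_0,X_{t_0}=z_0)^{p/2}$ via Lemma~\ref{lem:core} to extract $\max_x\mu(x)^{p-(1+2/d)}$; and there is a \emph{terminal} block on $[t_{k-1},n]$ that you did not mention, which the paper bounds by conditioning on $\F_{t_{k-1}+T-1}$ and then by $\E[(W_{T-1}^{\beta',\llambda})^{p}]$. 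For \eqref{eq:L} the combinatorics is over the $k-1$ gaps, not over $n$: the paper bounds $\binom{k-1}{l}\le 2^{k-1}$, combines it with the $4^{-(k-1-l)}$ from the short blocks, and sums over $k\ge l+1$ --- exactly the ``absorb into the geometric estimate'' you anticipated.
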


\begin{proof}[Proof of Lemma~\ref{lem:chaos}]
We decompose
	\begin{align}
		&\E\left[W^{\otimes 2,\beta',\llambda,\mu,\star}_{0,n}[\1_{K_n=k}]^{p/2}\right]\notag\\
		&=\E\Big[\Big(\sum_{\substack{t_0,\dots,t_{k-1}\\z_0,\dots,z_{k-1}}} W^{\otimes 2,\beta',\llambda,\mu,\star}_{0,n}\Big[\1_{K_n=k,\tau_i=t_i, X_{\tau_i}=z_i\text{ for all }i=0,\dots,k-1}\Big]\Big)^{p/2}\Big]\notag\\
		&\leq	\sum_{\substack{t_0,\dots,t_{k-1}\\z_0,\dots,z_{k-1}}}\E\Big[\Big( W^{\otimes 2,\beta',\llambda,\mu,\star}_{0,n}\Big[\1_{K_n=k,\tau_i=t_i, X_{\tau_i}=z_i\text{ for all }i=0,\dots,k-1}\Big]\Big)^{p/2}\Big],\label{eq:insert}
	\end{align}
	where the summation is over $z_0,\dots,z_{k-1}\in\Z^d$ and $0\leq t_0\leq t_1\leq\dots t_{k-1}\leq n$ with $t_i\geq t_{i-1}+T$, $i=1,\dots,k-1$. We have used \eqref{eq:sa} in the inequality. Next, we observe
\begin{equation}\label{eq:observe}
\begin{split}
	&W^{\otimes 2,\beta',\llambda,\mu,\star}_{0,n}\big[\1_{K_n=k,\tau_i=t_i, X_{\tau_i}=X_{\tau_i}'=z_i\text{ for all }i=0,\dots,k-1}\big]\\
	&=W^{\otimes 2,\beta',\llambda,\mu,\star}_{0,t_0}\big[\1_{\tau_0=t_0,X_{\tau_0}=z_0}\big]W^{\otimes 2,\beta',\llambda,(z_{k-1},z_{k-1}),\star}_{t_{k-1},n}\big[\1_{X_s\neq X_s'\text{ for all }s=t_{k-1}+T,\dots,n}\big]\\
	&\quad\times\prod_{i=1}^{k-1}W^{\otimes 2,\beta',\llambda,(z_{i-1},z_{i-1}),\star}_{t_{i-1},t_i}\big[\1_{X_s\neq X_s'\text{ for all }s=t_{i-1}+T,\dots,t_i-1,X_{t_i}=X_{t_i}'=z_i}\big].
\end{split}
\end{equation}
We note that since the quantities on the right-hand side depend on distinct subsets of the environment, they are independent. By Jensen's inequality, the first term can be estimated as
\begin{align*}
	\E\left[W^{\otimes 2,\beta',\llambda,\mu,\star}_{0,t_0}\big[\1_{\tau_0=t_0,X_{\tau_0}=z_0}\big]^{p/2}\right]&\leq \E\left[W^{\otimes 2,\beta',\llambda,\mu,\star}_{0,t_0}\big[\1_{\tau_0=t_0,X_{\tau_0}=z_0}\big]\right]^{p/2}\\
											 &=e^{\lambda(p\beta')-p\lambda(\beta')}P^{\otimes 2,\llambda,\mu,\star}(\tau_0=t_0,X_{\tau_0}=z_0)^{p/2},
\end{align*}
where we have used that, by definition, $X_s\neq X_s$ for $s=0,\dots,\tau_0-1$. For the last term we similarly get
\begin{align*}
	&\E\Big[W^{\otimes 2,\beta',\llambda,(z_{k-1},z_{k-1}),\star}_{t_{k-1},n}\big[\1_{X_s\neq X_s'\text{ for all }s=t_{k-1}+T,\dots,n}\big]^{p/2}\Big]\\
	&\leq\E\left[\E\Big[W^{\otimes 2,\beta',\llambda,(z_{k-1},z_{k-1}),\star}_{t_{k-1},n}\big[\1_{X_s\neq X_s'\text{ for all }s=t_{k-1}+T,\dots,n}\big]\Big|\F_{t_{k-1}+T-1}\Big]^{p/2}\right]\\
	&\leq\E\left[W^{\otimes 2,\beta',\llambda,(z_{k-1},z_{k-1}),\star}_{t_{k-1},t_{k-1}+T-1}\big[\1_{X_s\neq X_s'\text{ for all }s=t_{k-1}+T,\dots,n}\big]^{p/2}\right]\\
	&\leq\E\Big[\big(W^{\beta',\llambda}_{T-1}\big)^{p}\Big].
\end{align*}
By dropping the restriction ``$t_{k-1}\leq n$'' in \eqref{eq:insert} and using the independence noted after \eqref{eq:observe}, we obtain
\begin{align}&	\E\left[W^{\otimes 2,\beta',\llambda,\mu,\star}_n[\1_{K_n=k}]^p\right]	 \leq e^{\lambda(p\beta')-p\lambda(\beta')}\E\Big[\big(W_{T-1}^{\beta',\llambda}\big)^{p}\Big]\label{eq:middle1}
\\
&\times \Big(\sum_{t_0\in\N,z_0\in\Z^d} P^{\otimes 2,\llambda,\mu,\star}\big(X_s\neq X_s'\text{ for all }s=0,\dots,t_0-1,X_{t_0}=X_{t_0}'=z_0\big)^{p/2}\Big)\label{eq:middle2}
\\
											&\times \Big(\sum_{t\geq T,z\in\Z^d} \E\left[W^{\otimes 2,\beta',\llambda,\star}_t\big[\1_{X_s\neq X_s'\text{ for all }s=T,\dots,t-1,X_t=X_t'=z}\big]^{p/2}\right]\Big)^{k-1}.\label{eq:middle3}
\end{align}
The two factors on the right-hand side of \eqref{eq:middle1} are clearly bounded in $\beta'$ and $\llambda$. The factor in \eqref{eq:middle2} can be bounded by $C\max_x \mu(x)^{p-(1+2/d)}$ by Lemma~\ref{lem:core} and the factor in \eqref{eq:middle3} is bounded by $4^{-(k-1)}$ due to the choice of $\beta_0$ and $\lambda_0$ and Lemma~\ref{lem:core2}. This proves  \eqref{eq:K}.

\smallskip The argument for \eqref{eq:L} follows similarly. We start with  \eqref{eq:insert}, where the summation is instead over  $k\geq l+1$ and $0\leq t_0\leq t_1\leq\dots t_{k-1}\leq n$ such that $t_i\geq t_{i-1}+T$ ($i=1,\dots,k$), and $t_i-t_{i-1}\geq n^{1/2}$ for at least $l$ indices $i\in\{1,\dots,k-1\}$. By repeating the argument, we see that the left-hand side of \eqref{eq:L} is bounded by the same quantity  as in \eqref{eq:middle1}--\eqref{eq:middle3}, except that \eqref{eq:middle3} is replaced by
\begin{align*}
	&\sum_{k\geq l+1}\binom{k-1}{l}\Big(\sum_{t\geq T,z\in\Z^d} \E\left[W^{\otimes 2,\beta',\llambda}_t\big[\1_{X_s\neq X_s'\text{ for all }s=T,\dots,t-1,X_t=X_t'=z}\big]^{p/2}\right]\Big)^{k-1-l}\\
	&\qquad\times \Big(\sum_{t\geq n^{1/2},z\in\Z^d} \E\left[W^{\otimes2,\beta',\llambda}_t\big[\1_{X_s\neq X_s'\text{ for all }s=T,\dots,t-1,X_t=X_t'=z}\big]^{p/2}\right]\Big)^{l}.
\end{align*}
The binomial coefficient is bounded by $2^{k-1}$, the term inside $(\dots)^{k-1-l}$ is bounded by $\frac{1}{4}$ due to \eqref{eq:first} and the term inside $(\dots)^l$ by $Cn^{-(p-(1+2/d))/2}$ due to \eqref{eq:second}. Thus each summand in the display above is bounded by $2^{-(k-1-l)}(2Cn^{-(p-(1+2/d))/2})^l$ and the claim follows by taking the sum over $k$.

\smallskip Finally, to obtain \eqref{eq:KK} we use \eqref{eq:sa} to get
\begin{align*}
	\E[(W^{\beta',\llambda})^p]\leq \sum_{k=0}^\infty\E\big[W^{\otimes 2,\beta',\llambda}[\1_{K_n=k}]^{p/2}\big].
\end{align*}
Applying \eqref{eq:K}, we see that the contribution from the sum over $k\geq 1$ is indeed bounded in $n$, $\beta'$ and $\llambda$, and by Jensen's inequality $\E[W_n^{\otimes 2,\beta',\llambda}[\1_{K_n=0}]^{p/2}]\leq \E[W_n^{\otimes 2,\beta',\llambda}[\1_{K_n=0}]]^{p/2}\leq 1$.
\end{proof}

Up to this point, we have only considered partition functions with free endpoint. We now bound the joint pinned partition function with given starting point and endpoint and with the restriction that there is at least one collision.

\begin{lemma}\label{lem:core3}
	Assume $\p(\beta)>1+2/d$ and let $p\in(1+2/d,\p(\beta)\wedge 2)$. Recalling \eqref{eq:def_EE} and \eqref{eq:def_m}, let $p$, $T$ and $\lambda_0$ be as in Lemma~\ref{lem:chaos} and set $\alpha\coloneqq |\m(\lambda_0,\dots,\lambda_0)|_\infty$/2. For all $M>1$, there exists $C>0$ such that, for all $n\in\N$ and $\nu\in\EE_n(\alpha ,M)$,
\begin{align}\label{eq:RHS}
	\E\Big[W_{0,n}^{\otimes 2,\beta, \nu}[\1_{K_{n-1}>0}]^{p/2}\Big]\leq C\left(\sup_x \nu(x,\star)^{p-(1+2/d)}+\sup_y\nu(\star,y)^{p-(1+2/d)}\right).
\end{align}
\end{lemma}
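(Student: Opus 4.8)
The plan is to reduce the pinned, two-point partition function $W_{0,n}^{\otimes 2,\beta,\nu}[\1_{K_{n-1}>0}]$ to the free-endpoint quantities controlled in Lemma~\ref{lem:chaos}, paying for the bridge conditioning with the local central limit theorem from Lemma~\ref{lem:lclt}(i). First I would fix $\nu\in\EE_n(\alpha,M)$ with associated centers $a,b\in\Z^d$, $|a-b|\le\alpha n$, and choose the drift $\llambda$ with $\m(\llambda)=(b-a)/n$; since $|a-b|\le\alpha n=|\m(\lambda_0,\dots,\lambda_0)|_\infty\, n/2$, one checks $\llambda\in[-\lambda_0,\lambda_0]^d$, so Lemma~\ref{lem:chaos} applies with this $\llambda$. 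The point of this choice is that, under $P^{\otimes 2,\llambda,(x,x'),\star}$, the endpoints $X_n,X_n'$ concentrate near $x+n\m(\llambda)$, $x'+n\m(\llambda)$, which for $(x,x')$ in the support of $\nu$ is near $b$, i.e.\ near the prescribed endpoint; this is exactly the regime in which Lemma~\ref{lem:lclt}(i) gives a bounded ratio between bridge expectations and the product of the forward/backward tilted expectations.

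\smallskip Next I would decompose on the value of $K_{n-1}=k\ge 1$ and, using \eqref{eq:sa}, write
\begin{align*}
\E\Big[W_{0,n}^{\otimes 2,\beta,\nu}[\1_{K_{n-1}>0}]^{p/2}\Big]\le\sum_{k\ge1}\E\Big[W_{0,n}^{\otimes 2,\beta,\nu}[\1_{K_{n-1}=k}]^{p/2}\Big].
\end{align*}
For each $k$, expand as in the proof of Lemma~\ref{lem:chaos} over the collision data $(t_0,\dots,t_{k-1},z_0,\dots,z_{k-1})$ and apply \eqref{eq:sa} once more. The structural difference from Lemma~\ref{lem:chaos} is that now both endpoints are pinned, so there is a genuine bridge on $[0,n]$ rather than a free walk. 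The key step is to split the path at the first collision time $\tau_0=t_0$ (which lies in $[0,n-1]$ because $K_{n-1}\ge1$) and at the last collision time $\tau_{k-1}=t_{k-1}$: the segment $[0,t_0]$ and the segment $[t_{k-1},n]$ are the only pieces that see the pinned endpoints, while the middle is a chain of free two-sided tilted partition functions of the type already bounded in Lemma~\ref{lem:core2}. Here I would invoke Lemma~\ref{lem:lclt}(i) to replace the bridge expectation on the first block by a forward tilted expectation $E^{\otimes 2,\llambda,(x,x'),\star}$ and on the last block by a backward one $E^{\otimes 2,\llambda,\star,(y,y')}$, at the cost of the uniform constant $C$; one must check the hypotheses of that lemma, namely that $|x-y-n\m(\llambda)|$ and $|x'-y'-n\m(\llambda)|$ are $O(n^{1/2})$, which holds because $\nu$ is supported in $(a,b)+[-Mn^{1/2},Mn^{1/2}]^{2d}$ and $a+n\m(\llambda)=b$. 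A small technical wrinkle: Lemma~\ref{lem:lclt}(i) requires a macroscopic gap $t-s\ge\eps n$ between the two conditioning times, so I would first separately handle the contribution where $t_0$ and $t_{k-1}$ are within $\eps n$ of each other (then $K_{n-1}=k$ forces many collisions in a short window, and the bound from Lemma~\ref{lem:chaos}'s $K$-estimate, or directly the $L_n$-estimate \eqref{eq:L}, makes this negligible), and on the complementary event apply the factorization with $s=t_0$, $t=t_{k-1}$.

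\smallskip Once the factorization is in place, the first block contributes $\sum_{t_0,z_0}P^{\otimes 2,\llambda,\nu_1,\star}(X_s\ne X_s'\text{ for }s<t_0,\ X_{t_0}=X_{t_0}'=z_0)^{p/2}$ where $\nu_1$ is the first marginal of $\nu$; by Lemma~\ref{lem:core} this is $\le C\max_x\nu(x,\star)^{p-(1+2/d)}$. Symmetrically, grouping the last collision instead (i.e.\ running time backwards, which is legitimate since forward and backward partition functions have the same law by the remark after \eqref{eq:backward}), the last block contributes $\le C\max_y\nu(\star,y)^{p-(1+2/d)}$. Since the decomposition lets us attribute the relevant $p-(1+2/d)$-power to \emph{either} the first or the last collision block — and the remaining middle blocks only contribute a factor $4^{-(k-2)}$ or so, via Lemma~\ref{lem:core2}, while the summation over $k$ converges — we can bound the whole sum by $C$ times the smaller of the two maxima, hence certainly by $C(\max_x\nu(x,\star)^{p-(1+2/d)}+\max_y\nu(\star,y)^{p-(1+2/d)})$, which is \eqref{eq:RHS}. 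I expect the main obstacle to be the bookkeeping around Lemma~\ref{lem:lclt}(i): making the macroscopic-gap condition $t-s\ge\eps n$ compatible with all $k$ and all admissible collision configurations, and carefully isolating the short-gap exceptional event so that its contribution is absorbed using \eqref{eq:K} or \eqref{eq:L} rather than the factorization; the rest is a direct adaptation of the proof of Lemma~\ref{lem:chaos} with the two endpoint blocks treated via the LCLT.
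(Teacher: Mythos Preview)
Your high-level plan --- use Lemma~\ref{lem:lclt}(i) to trade the bridge conditioning for free-endpoint tilted walks, then invoke Lemma~\ref{lem:chaos} --- is the right one, and the choice of $\llambda$ is correct. But the way you propose to apply Lemma~\ref{lem:lclt}(i), namely with $s=t_0$ and $t=t_{k-1}$, does not work. That lemma bounds $E^{\otimes 2,(x,x'),(y,y')}_{0,n}[fg]$ for $f\in\G^{\otimes 2}_{[0,s]}$ and $g\in\G^{\otimes 2}_{[t,n]}$; there is no room for any integrand that depends on the path in the gap $(s,t)$. In your decomposition the ``middle blocks'' --- the collision constraints and the environment on $[t_0,t_{k-1}]$ --- are precisely such an integrand, so the integrand is not of the form $fg$ and the lemma does not apply. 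In particular, after the splitting the middle pieces are \emph{bridges} between $(t_{i-1},z_{i-1})$ and $(t_i,z_i)$, not the free tilted walks that Lemma~\ref{lem:core2} controls; there is no mechanism in your argument that converts them. The ``technical wrinkle'' you flag is therefore not just about $t_{k-1}-t_0$ being macroscopic: what is actually needed is a macroscopic interval that is \emph{collision-free}, so that after integrating out the environment there (by Jensen) the remaining integrand genuinely factors as $f\cdot g$.

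This is exactly what the paper's proof arranges, and it is the missing idea in your sketch. One first disposes of the events $\{K_{n-1}>K\log n\}$ and $\{L_{n-1}>L\}$ by the crude estimate $W^{\otimes 2,\beta,(x,x'),(y,y')}_{0,n}[\1_A]\le Cn^d\,W^{\otimes 2,\beta,\llambda,(x,x'),\star}_{n-1}[\1_A]$ (Theorem~\ref{thmx:local}) combined with \eqref{eq:K}--\eqref{eq:L}; for these events the polynomial loss $n^{d}$ is beaten by choosing $K$ and $L$ large. On the complementary event $\{0<K_{n-1}\le K\log n,\ L_{n-1}\le L\}$, the $L_{n-1}\le L$ restriction forces the collisions to cluster into at most $L+1$ groups of temporal diameter $O(n^{1/2}\log n)$; partitioning $[n/4,3n/4]$ into $4L$ equal deterministic intervals $I_1,\dots,I_{4L}$, one of them --- say $I_{i_0}$ --- must be collision-free. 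Integrating out the environment on $I_{i_0}$ (Jensen) and then applying Lemma~\ref{lem:lclt}(i) with $(s,t)=(s_{i_0},t_{i_0})$ yields the factorization into a forward piece $W^{\otimes 2,\beta,\llambda,\nu(\cdot,\star),\star}_{0,s_{i_0}-1}[\1_{K>0}]$ and a backward piece $W^{\otimes 2,\beta,\llambda,\star,\nu(\star,\cdot)}_{t_{i_0}+1,n}$, to which \eqref{eq:K} and \eqref{eq:KK} apply directly. Your proposed handling of the ``exceptional'' short-gap case via \eqref{eq:K} or \eqref{eq:L} alone also fails for the same reason: those bounds are for free walks, and for small $k$ the factor $4^{-k}$ cannot absorb the $n^{d}$ cost of undoing the bridge.
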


\begin{proof}
Given $\nu\in\EE_n(\alpha ,M)$, let $a,b$ be as in \eqref{eq:def_EE} and note that there exists $\llambda\in[-\lambda_0,\lambda_0]^d$ such that $\m(\llambda)=(b-a)/n$. We decompose
\begin{equation}\label{eq:strt}
	\begin{split}	\E\left[W^{\otimes2,\beta,\nu}_{0,n}[\1_{K_{n-1}>0}]^{p/2}\right] &\leq\E\left[W^{\otimes2,\beta,\nu}_{0,n}[\1_{K_{n-1}>K\log n}]^{p/2}\right] +\E\big[W^{\otimes2,\beta,\nu}_{0,n}[\1_{L_{n-1}> L}]^{p/2}\big]\\
				   &\quad+\E\Big[W^{\otimes2,\beta,\nu}_{0,n}[\1_{0<K_{n-1}\leq K\log n,L_{n-1}\leq L}]^{p/2}\Big],
	\end{split}
	\end{equation}
We first show that $K$ and $L$ can be chosen in such a way that the first two terms in the above display can be disregarded, namely
\begin{align}
	\E\left[W^{\otimes2,\beta,\nu}_{0,n}[\1_{K_n>K\log n}]^{p/2}\right]&\leq Cn^{-\frac d2(p-(1+2/d))-1}\label{eq:b1},\\
	\E\big[W^{\otimes2,\beta,\nu}_{0,n}[\1_{L_n> L}]^{p/2}\big]&\leq Cn^{-\frac d2(p-(1+2/d))-1}\label{eq:b2}.
\end{align}
Note that $\nu(\cdot,\star)$ is supported on sets of cardinality at most $(2M+1)^dn^{d/2}$, hence $\sup_x\nu(x,\star)\geq (2M+1)^{-d}n^{-d/2}$  and therefore the right-hand side of \eqref{eq:RHS} is much larger than the right-hand sides of \eqref{eq:b1} and \eqref{eq:b2}. In order to prove \eqref{eq:b1} and \eqref{eq:b2}, note that for any event $A$,
\begin{align*}
W^{\otimes 2,\beta,\nu}_{0,n}[\1_A]=\sum_{x,x',y,'y}\nu(x,y)\nu(x',y') W^{\otimes 2,\beta,(x,x'),(y,y')}_{0,n}[\1_A]
\end{align*}
Each term can be estimated as follows:
\begin{align*}
	&W^{\otimes 2,\beta,(x,x'),(y,y')}_{0,n}[\1_A]\\&= \frac{W^{\otimes 2,\beta,(x,x'),\star}_{n-1}[\1_A\1_{X_n=y,X_n'=y'}]}{P(X_n=y-x)P(X_n=y'-x')}\\
					     &=\frac{W^{\otimes 2,\beta,(x,x'),\star}_{n-1}[\1_A\1_{X_n=y,X_n'=y'}e^{\llambda\cdot (X_n-x) +\llambda \cdot (X_n'-x')-2n\varphi(\llambda)}  ]}{E[e^{\llambda\cdot X_n -n\varphi(\llambda)}\1_{X_n=y-x}]E[e^{\llambda\cdot X_n -n\varphi(\llambda)}\1_{X_n=y'-x'}]}\\
					     &\leq \frac{W^{\otimes 2,\beta,\llambda,(x,x'),\star}_{n-1}[\1_A  ]}{P^{\llambda}(X_n=y-x)P^{\llambda}(X_n=y'-x')}\\
					     &\leq CW^{\otimes 2,\beta,\llambda,(x,x'),\star}_{n-1}[\1_A  ]n^{d},
\end{align*}
where $(x,y)$ and $(x',y')$ are taken from the support of $\nu$. In the final line, we have used that $y-x,y'-x'\in n\m(\llambda)+[-Mn^{1/2},Mn^{1/2}]^d$ and applied Theorem~\ref{thmx:local}. Together with \eqref{eq:sa}, we get
\begin{align}
	&\E\big[W^{\otimes 2,\beta,\nu}_{n-1}[\1_A]^{p/2}\big]\notag\\
	&\leq Cn^{dp/2}\sum_{x,x',y,y'} \nu(x,y)^{p/2}\nu(x',y')^{p/2} \E\left[W^{\otimes 2,\beta,\llambda,(x,x'),\star}_{n-1}[\1_A]^{p/2}\right]\notag\\
							   &\leq Cn^{pd/2}\Big(\sup_{x,x'}\E\left[W^{\otimes 2,\beta,\llambda,(x,x'),\star}_{n-1}[\1_A]^{p/2}\right]\Big)\sum_{x,x',y,y'} \nu(x,y)^{p/2}\nu(x',y')^{p/2} \notag\\
							   &\leq Cn^{pd/2+2d}\sup_{x,x'}\E\left[W^{\otimes 2,\beta,\llambda,(x,x'),\star}_{n-1}[\1_A]^{p/2}\right] .\label{eq:apriori}
\end{align}
In the final line, we have bounded $\nu(x,y)$ and $\nu(x',y')$ by $1$ and used that the support of $\nu$ has cardinality at most $(2M+1)^{2d}n^d$. Now, to prove \eqref{eq:b1} and \eqref{eq:b2} we set $A$ equal to $\{K_{n-1}\geq K\log n\}$ and to $\{L_{n-1}\geq L\}$. Due to  Lemma~\ref{lem:chaos}, we can choose $K$ and $L$ such that the expectation in \eqref{eq:apriori} decays at an arbitrarily fast polynomial rate.

\smallskip It remains to bound the final term in \eqref{eq:strt}. To do so, we partition the interval $[\frac 14n,\frac 34n]\cap\N$ into $4L$ disjoint intervals $I_1=\{s_1,\dots,t_1\},\dots,I_{4L}=\{s_{4L},\dots,t_{4L}\}$ of (approximately) equal size, such that $[\frac 14n,\frac 34n]\cap\N=\bigcup_{i=1}^{4L}I_i$ and $|I_i|\geq n/9L$ for $n$ large enough. We claim that on $\{0<K_{n-1}\leq K\log n,L_{n-1}\leq L\}$ there is at least one interval $I_i$ such that $X_t\neq X_t'$ for all $t\in I_i$. To see this, let $A_0\coloneqq 0$ and let $A_{i+1}$ be the index $j$ such that $\tau_j$ is the endpoint of the next large interval after time $\tau_{A_i}$,
\begin{align*}
	A_{i+1}\coloneqq \inf\big\{j>A_i\colon \tau_{j}\geq \tau_{j-1}+ n^{1/2}\big\}.
\end{align*}
After each large interval $[\tau_{A_i-1},\tau_{A_i}]$ there are $A_{i+1}-A_i-1\leq K_n$ intervals of length at most $n^{1/2}$, thus $\{\tau_{A_i+1},\dots,\tau_{A_{i+1}-1}\}\subseteq [\tau_{A_i},\tau_{A_i}+ K_nn^{1/2}]$.
On $\{K_{n-1}\leq K\log n\}$, it holds that
\begin{align*}
	\big\{t\in\{1,\dots,n-1\}\colon X_t=X_t'\big\}\subseteq\bigcup_{j=0}^{K_{n-1}-1}[\tau_{j},\tau_{j}+T)\subseteq\bigcup_{i=0}^{L_{n-1}}\big[\tau_{A_i},\tau_{A_i}+Kn^{1/2}\log n+T\big).
\end{align*}
The length of the intervals in the last union is much smaller than $|I_i|$ (recall that $|I_i|\geq n/9L$ for $n$ large enough), so each interval can intersect at most $2$ of the $I_i$'s. Hence on $\{L_{n-1}\leq L\}$ there must be at least one $i_0$ such that $\bigcup_{i=0}^{K_{n-1}-1}[\tau_i,\tau_i+T)\cap I_{i_0}=\emptyset$.

\smallskip In addition, on $K_{n-1}>0$ we must have $X_r=X_r'$ for some $r$ in either $[0,s_{i_0})\cap\N$ or $(t_{i_0},n)\cap\N$. Together with \eqref{eq:sa}, we see that the last term in \eqref{eq:strt} is bounded by
	\begin{align*}
		&\sum_{i=1}^{4L}\E\Big[W^{\otimes2,\beta,\nu}_{0,n}[\1_{X_r=X_r'\text{ for some }r<s_i,X_t\neq X_t'\text{ for all }t\in I_i}]^{p/2}\Big]\\
		&\qquad+\sum_{i=1}^{4L}\E\Big[W^{\otimes2,\beta,\nu}_{0,n}[\1_{X_r=X_r'\text{ for some }r>t_i,X_t\neq X_t'\text{ for all }t\in I_i}]^{p/2}\Big].
\end{align*}
We will bound the first sum and note that the contribution from the second sum can be treated similarly. As explained in Section~\ref{sec:strat}, the idea is that since there are no intersections in $I_i$, we can integrate out the environment in this strip, and the resulting partition function is then essentially the product of two free partition functions. Indeed, by Jensen's inequality, each summand can be bounded by
\begin{align}
		&\E\Big[\E\big[W^{\otimes2,\beta,\nu}_{0,n}[\1_{X_r=X_r'\text{ for some }r<s_i,X_t\neq X_t'\text{ for all }t\in I_i}]\big|\F_{(0,n)\setminus I_i}\big]^{p/2}\Big]\notag\\
		&= \E\big[W^{\otimes2,\beta,\nu}_{(0,n)\setminus I_i}[\1_{X_r=X_r'\text{ for some }r<s_i,X_t\neq X_t'\text{ for all }t\in I_i}]^{p/2}\big]\notag\\
		&\leq  \E\big[W^{\otimes2,\beta,\nu}_{(0,n)\setminus I_i}[\1_{X_r=X_r'\text{ for some }r<s_i}]^{p/2}\big]\notag
	\end{align}
	where, for $I\subseteq \R_+$, $\F_{I}\coloneqq \sigma(\omega_{t,x}\colon t\in I\cap \N)$ and where (recall \eqref{eq:HI})
	\begin{align*}
		&W^{\otimes2,\beta,\nu}_{I}[\1_{A}]\\
		&\sum_{x,x',y,y'}\nu(x,y)\nu(x',y')E^{\otimes 2,(x,x'),(y,y')}_{0,n}\big[e^{\beta H_{I}(\omega,X)+\beta H_{I}(\omega,X')-2|I\cap \N|\lambda(\beta)}\1_A\big].
	\end{align*}
	Now, applying Lemma~\ref{lem:lclt} with
	\begin{align*}
		f&=e^{\beta H_{[1,s_i)}(\omega,X)+\beta H_{[1,s_i)}(\omega,X')-2s_i\lambda(\beta)}\1_{X_r=X_r'\text{ for some }r<s_i},\\
		g&=e^{\beta H_{(t_i,n)}(\omega,X)+\beta H_{(t_i,n)}(\omega,X')-2(n-t_i)\lambda(\beta)}
	\end{align*}
shows  that, almost surely,
\begin{align*}
	W^{\otimes2,\beta,\nu}_{(0,n)\setminus I_i}[\1_{X_r=X_r'\text{ for some }r<s_i}]&\leq CW^{\otimes 2,\beta,\llambda,\nu(\cdot,\star),\star}_{0,s_i-1}[\1_{X_r=X_r'\text{ for some }r<s_i}]W^{\otimes 2,\beta,\llambda,\star,\nu(\star,\cdot)}_{t_i+1,n}\\
											&=CW^{\otimes 2,\beta,\llambda,\nu(\cdot,\star),\star}_{0,s_i-1}[\1_{K_{s_i-1}>0}]W^{\otimes 2,\beta,\llambda,\star,\nu(\star,\cdot)}_{t_i+1,n}.
\end{align*}
Using again \eqref{eq:sa}, we have
\begin{align*}
	&\E\Big[W^{\otimes2,\beta,\nu}_{0,n}[\1_{X_r=X_r\text{ for some }r<s_i,X_t\neq X_t\text{ for all }t\in I_i}]^{p/2}\Big]\\
	&\leq \E\Big[(W^{\beta,\llambda,\star,\nu(\star,\cdot)}_{t_i+1,n}\big)^{p}\Big]\sum_{k\geq 1}\E\Big[W^{\otimes 2,\beta,\llambda,\nu(\cdot,\star),\star}_{0,s_i-1}[\1_{K_{s_i-1}=k}]^{p/2}\Big]
\end{align*}
The conclusion thus follows from Lemma~\ref{lem:chaos} by using \eqref{eq:KK} for the left factor and summing \eqref{eq:K} over $k\geq 1$ for the right factor.
	\end{proof}

\section{Proof of Theorem~\ref{thm:local}}\label{sec:main}
\begin{proof}[Proof of Theorem~\ref{thm:local}]
Let $T$ and $\alpha $ be as in Lemma~\ref{lem:core3}. Using \eqref{eq:sa} and Jensen's inequality, we decompose
	\begin{align*}
\E\big[\big(W^{\beta,x,y}_{0,n}\big)^p\big]&=\E\big[\big(W^{\otimes2,\beta,(x,x),(y,y)}_{0,n}\big)^{p/2}\big]\\
					   &\leq 1 +\E\Big[W^{\otimes2,\beta,(x,x),(y,y)}_{0,n}[\1_{K_{n-1}>0}\big]^{p/2}\Big],
	\end{align*}
The second term is bounded due to \eqref{eq:RHS} (with $\nu$ the Dirac distribution on $(x,y)$). It remains to prove \eqref{eq:local1}, for which we first observe
\begin{align}\label{eq:goal}
		\E[|W^{\beta,\nu}_{0,n}-1|^p]\leq \E\big[|W^{\beta,\nu}_{0,n}-1|^p\1_{A^c}\big]+\E\big[|W^{\beta,\nu}_{0,n}-1|^p\1_A\big],
	\end{align}
	where, for $a\in(0, 1)$ and $r\coloneqq \max_x\nu(x,\star)+\max_y\nu(\star,y)$,
	\begin{align*}
		A&\coloneqq \big\{W^{\otimes 2,\beta,\nu}_{0,n}[\1_{K_{n-1}>0}]\leq r^a\big\}.
\end{align*}
Note that, by choosing $p'$ sufficiently close to $\p$ we obtain, using again Lemma~\ref{lem:core3},
\begin{align*}
	\P(A^c)&\leq r^{-ap'/2}\E\big[W_{0,n}^{\otimes 2,\beta,\nu}[\1_{K_{n-1}>0}]^{p'/2}\big]\\
	       &\leq Cr^{-ap'/2+p'-(1+2/d)}\\
	       &\leq Cr^{-a\p/2+\p-(1+2/d)-\eps/2}.
\end{align*}
Then, by choosing $p''$ sufficiently close to $\frac{\p}{p}$, we obtain
\begin{equation}\label{eq:first_term}\begin{split}
	\E\big[|1-W_{0,n}^{\beta,\nu}|^p\1_{A^c}\big]&\leq \E\big[(1+W_{0,n}^{\beta,\nu})^{pp''}\big]^{\frac{1}{p''}}\P(A^c)^{1-\frac{1}{p''}}\\
						     &\leq \sup_{x,y\colon \nu(\{x,y\})>0}\E\big[(1+W_{0,n}^{\beta,x,y})^{pp''}\big]^{\frac{1}{p''}}\P(A^c)^{1-\frac{1}{p''}}\\
						     &\leq Cr^{(\p-p)(1-\frac{1+2/d}\p-a/2)-\eps}.
\end{split}\end{equation}
The second inequality is due to Jensen's inequality and the third inequality uses \eqref{eq:local3}. We have obtained a bound for the first term in \eqref{eq:goal}.

\smallskip Similarly, by choosing $p'''$ sufficiently close to $\p$, we get
\begin{equation}\label{eq:second_term1}
\begin{split}
	\E\big[W_{0,n}^{\beta,\nu}\1_{A^c}\big]&\leq \sup_{x,y\colon \nu(\{x,y\})>0}\E\big[(W^{\beta,x,y}_{0,n})^{p'''}\big]^{\frac{1}{p'''}}\P(A^c)^{1-\frac{1}{p'''}}\\
					       &\leq Cr^{(\p-1)(1-\frac{1+2/d}\p-a/2)-2\eps/p}.
\end{split}
\end{equation}
To bound the second term in \eqref{eq:goal}, we use
\begin{align*}
	&\E[|W^{\beta,\nu}_{0,n}-1|^p\1_A]\\
					 &\leq \E[|W^{\beta,\nu}_{0,n}-1|^2\1_A]^{p/2}\\
					 &=\left(\E[W^{\otimes 2,\beta,\nu}_{0,n}\1_A]-2\E[W^{\beta,\nu}_{0,n}\1_A]+\P(A)\right)^{p/2}\\
					 &=\left(\E[W^{\otimes 2,\beta,\nu}_{0,n}[\1_{K_{n-1}>0}]\1_A]+\E[W^{\otimes 2,\beta,\nu}_{0,n}[\1_{K_{n-1}=0}]\1_A]+2\E[W^{\beta,\nu}_{0,n}\1_{A^c}]-\P(A^c)-1\right)^{p/2}\\
					 &\leq \left(\E[W^{\otimes 2,\beta,\nu}_{0,n}[\1_{K_{n-1}>0}]\1_A]+2\E[W^{\beta,\nu}_{0,n}\1_{A^c}]-P(A^c)\right)^{p/2}\\
					 &\leq \left(r^{a}+2Cr^{(\p-1)(1-\frac{1+2/d}\p-a/2)-2\eps/p}\right)^{p/2}\\
					 &\leq r^{a p/2}+C'r^{\frac p2(\p-1)(1-\frac{1+2/d}\p-a/2)-\eps}
\end{align*}
In the second equality, we have used that $\E[W^{\beta,\nu}_{0,n}\1_A]=1-\E[W^{\beta,\nu}_{0,n}\1_{A^c}]$ and in the second inequality we have used $\E[W^{\otimes 2,\beta,\nu}_{0,n}[\1_{K_{n-1}=0}]]\leq 1$. The third equality uses the definition of $A$ and \eqref{eq:second_term1}. Together with \eqref{eq:first_term}, \eqref{eq:goal} is now bounded by
\begin{align*}
	r^{a p/2} + r^{\frac p2 (\p-1)(1-\frac{1+2/d}\p-a/2)-\eps} + r^{(\p-p)(1-\frac{1+2/d}\p-a/2)-\eps}.
\end{align*}
We obtain \eqref{eq:local1} by optimizing this expression over $a\in(0,2(1-\frac{1+2/d}\p))$. The first exponent is increasing and the remaining two are decreasing, and moreover they are multiples of each other. In particular, they equal zero for the same value of $a$. Depending on whether $p>1+\frac{\p-1}{\p+1}$ or not, the minimizing value $a $ can be computed by setting the first and the third exponent, resp. the first and the second exponent to be equal, which gives
\begin{align*}
	a =2\Big(1-\frac{1+2/d}{\p}\Big)\cdot\begin{cases}\frac{\p-1}{\p+1}&\text{ if }p\in(1+\frac2d,1+\frac{\p-1}{\p+1}),\\
	\frac{\p-p}{\p}&\text{ if }p\in(1+\frac{\p-1}{\p+1},\p).\end{cases}
\end{align*}
We obtain $\xi$ as $a p/2$, which concludes the proof.
\end{proof}

\section{Proof of the properties of $\p$}\label{sec:p}

\begin{proof}[Proof of Theorem~\ref{thm:p}]
	It is well-known that $\beta\mapsto \E[f(W_n^\beta)]$ is weakly increasing for every $f\colon\R_+\to\R$ convex, see for example \cite[Remark 2.6]{C17}, so $\beta\mapsto\p(\beta)$ is weakly decreasing.

	\smallskip For \textbf{part  (i)} we assume $\p(\beta)\in(1+2/d,2]$. By Lemma~\ref{lem:chaos}, for every $p\in(1+2/d,\p)$ there exists $\beta'>\beta$ such that $\sup_n\E[(W_n^{\beta'})^p]<\infty$, hence $\p(\beta')\geq p$. By taking $p\uparrow\p$, we obtain $\lim_{\beta'\downarrow\beta}\p(\beta')\geq\p$ and the claim follows from the monotonicity of $\p$.

	\smallskip For \textbf{part (ii)}, we use hypercontractivity and the result from \cite{J21_2}, which are introduced in Appendix~\ref{app:hyper} and~\ref{app:21_2}. We assume that $\P$ has finite support and that $\p(\beta)>1$. Let $\beta'\in(0,\beta)$. By Theorem~\ref{thmx:alea}, there exists $\rho\in(0,1)$ such that, for any $p\geq 1$ and $n\in\N$,
	\begin{align}\label{eq:compare}
		\E[(W_n^{\beta'})^p]\leq \E[(T_\rho W_n^\beta)^p].
	\end{align}
Next, recall the value $p(q)$ from Theorem~\ref{thmx:myhyper}. We claim that there exists $\eps>0$ such that
	\begin{align}\label{eq:eps}
		p(\p(\beta)-\eps)>\p(\beta).
	\end{align}
	Indeed, for all $\eps\in(0,\p(\beta)-1)$ it holds that $r(\eps)\coloneqq \frac{p(\p(\beta)-\eps)}{\p(\beta)-\eps}>1$ and $\eps\mapsto r(\eps)$ is increasing. Let $\eps_0\coloneqq \frac{1+\p(\beta)}{2}$ and note that \eqref{eq:eps} holds if we choose $\eps>0$ small enough for $(\p(\beta)-\eps)r(\eps_0)>\p(\beta)$. Finally, we apply \eqref{eq:compare} with $p$ equal to $p(\p(\beta)-\eps)$ and obtain
	\begin{align*}
		\|W_n^{\beta'}\|_{p(\p(\beta)-\eps)}\leq\|T_\rho W_n^{\beta}\|_{p(\p(\beta)-\eps)}\leq \|W_n^\beta\|_{\p(\beta)-\eps},
	\end{align*}
	where the last inequality is due to Theorem~\ref{thmx:myhyper}. By definition of $\p$, the right-hand side is bounded in $n$, hence by \eqref{eq:eps} we have $\p(\beta')\geq p(\p(\beta)-\eps)>\p(\beta)$.
\end{proof}

\section{Proofs for the corollaries}\label{sec:coro}

We start with the local limit theorem for the polymer measure.

\begin{proof}[Proof of Corollary~\ref{cor:informal}]
For \eqref{eq:informal2}, we introduce
\begin{align*}
	W^{\beta,x,y}_{(0,r]\cup[n-r,n)}&\coloneqq E^{x,y}_{0,n}[e^{\beta H_{(0,r]\cup [n-r,n)}(\omega,X)-2r\lambda(\beta)}],\\
	\nu_{\omega,r}(x',y')&\coloneqq \frac{W^{\beta,x,y}_{(0,r]\cup[n-r,n)}[\1_{X_r=x',X_{n-r}=y'}]}{W^{\beta,x,y}_{(0,r]\cup[n-r,n)}}.
\end{align*}
By applying Lemma~\ref{lem:lclt}(ii) with $f=e^{\beta H_{(0,r]}(\omega,X)-r\lambda(\beta)}$ and $g=e^{\beta H_{[n-r,n)}(\omega,X)-(n-r)\lambda(\beta)}$, we obtain, almost surely,
\begin{align}\label{eq:uniform1}
	\Big|\frac{W^{\beta,x,y}_{(0,r]\cup[n-r,n)}}{W^{\beta,x,\star}_{r}W^{\beta,\star,y}_{n-r,n}}-1\Big|\leq a_n.
\end{align}
Moreover, for $n$ large enough the reciprocal of the quotient is within distance $2a_n$ of $1$. Similarly,  by multiplying $f$ by $\1_{X_r=x'}$ and $g$ by and $\1_{X_{n-r}=y'}$, for $(0,x)\leftrightarrow(r,x')$ and $(n-r,y')\leftrightarrow(n,y)$,
\begin{align*}
	\Big|\frac{W^{\beta,x,y}_{(0,r]\cup[n-r,n)}[\1_{X_r=x'}\1_{X_{n-r}=y'}]}{W^{\beta,x,\star}_{r}[\1_{X_r=x'}]W^{\beta,\star,y}_{n-r,n}[\1_{X_{n-r}=y'}]}-1\Big|\leq a_n.
\end{align*}
Thus we obtain, almost surely for $n$ large enough,
\begin{align}\label{eq:uniform2}
	\sup_{x',y'}\Big|\frac{\nu_{\omega,r}(x',y')}{\mu_{\omega,r}^{x,\star}(X_r=x')\mu_{\omega,[n-r,n)}^{\star,y}(X_{n-r}=y')}-1\Big|\leq 4a_n,
\end{align}
where $\mu_{\omega,r}^{x,\star}(X_{r}=x')=\frac{W^{\beta,x,\star}_{r}[\1_{X_{r}=x'}]}{W^{\beta,x,\star}_{r}}$ and $\mu_{\omega,[n-r,n)}^{\star,y}(X_{n-r}=y')=\frac{W^{\beta,\star,y}_{n-r,n}[\1_{X_{n-r}=y'}]}{W^{\beta,\star,y}_{n-r,n}}$ are the forward and backward versions of the polymer measure and the supremum is over those $x',y'$ with $\nu_{\omega,r}(x',y')>0$.

\smallskip Note that $W^{\beta,x,y}_{0,n}=W^{\beta,x,y}_{(0,r]\cup[n-r,n)}W^{\beta,\nu_{\omega,r}}_{r,n-r}$. Thus, using \eqref{eq:uniform1} and the inequality $|AB-1|\leq |A||B-1|+|A-1|$, we obtain
\begin{align*}
	\Big|\frac{W^{\beta,x,y}_{0,n}}{W^{\beta,x,\star}_{r}W^{\beta,\star,y}_{n-r,n}}-1\Big|\leq(1+a_n) |W^{\beta,\nu_{\omega,r}}_{r,n-r}-1|+ a_n.
\end{align*}
Furthermore, by applying Theorem~\ref{thm:local}(ii) and \eqref{eq:uniform2}, we get, for some $A>0$,
\begin{align*}
	\E[|W^{\beta,\nu_{\omega,r}}_{r,n-r}-1|^p]&\leq C \E[\max_{x'} \nu_{\omega,r}(x',\star)^A]+C\E[\max_{y'}\nu_{\omega,r}(\star,y')^A]\\
						  &\leq C(1+4a_n)^A \E\big[\max_{x'} \mu_{\omega,r}(X_r=x')^A\big].
\end{align*}
Note that the bounded in the last line no longer depends on $x$ and $y$. The claim now follows from Theorem~\ref{thmx:B}(i). For \eqref{eq:informal3}, due to the preceding considerations, we obtain
\begin{align*}
&\E\Big[\Big|W^{\beta,x,y}_{0,n}-W^{\beta,x,\star}_rW^{\beta,\star,y}_{n-r,n}\Big|^p\Big]=\E\Big[\Big(W^{\beta,x,\star}_rW^{\beta,\star,y}_{n-r,n}\Big)^p\Big|\frac{W^{\beta,x,y}_{0,n}}{W^{\beta,x,\star}_rW^{\beta,\star,y}_{n-r,n}}-1\Big|^p\Big]\\
&\leq C\E\Big[\Big(W^{\beta,x,\star}_rW^{\beta,\star,y}_{n-r,n}\Big)^p\Big(\max_{x'} \mu_{\omega,r}^{x,\star}(X_r=x')^A+\max_{y'}\mu_{\omega,[n-r,n)}^{\star,y}(X_{n-r}=y')^A+ a_n\Big)\Big]
\end{align*}
The second factor in the last expectation converges to zero in probability by Theorem~\ref{thmx:B}(i), and since it is bounded also in $L^q$ for any $q<\infty$. Thus the claim follows by H\"older's inequality.
\end{proof}

We continue with the critical exponent for the pinned partition function.
\begin{proof}[Proof of Corollary~\ref{cor:pp}]
By applying Lemma~\ref{lem:lclt}(ii) with $\eps=\frac 18$, $x=y=0$, $s=\lfloor 2n^{\eps}\rfloor$, $t=0$, $f=e^{\beta H_s(\omega,X)-s\lambda(\beta)}$ and $g=1$, we obtain
\begin{align*}
W_s^{\beta,0,\star}\leq \frac{\E[W_{0,2n}^{\beta,0,0}|\F_s]}{1-a_{2n}},
\end{align*}
and thus, by Jensen's inequality,
\begin{align*}
\E[(W_s^\beta)^p]\leq \E[(W_{0,2n}^{\beta,0,0})^p](1-a_{2n})^{-p}.
\end{align*}
If $p> \p(\beta)$, then the left-hand side is unbounded in $n$, hence the right-hand side is also unbounded and we obtain $p\geq \pp(\beta)$. This shows $\pp(\beta)\leq\p(\beta)$.

\smallskip For the converse inequality we assume $\p(\beta)\in(1+2/d,2]$. By Theorem~\ref{thm:local}(i), for any $p\in(1+2/d,\p)$ we have $\sup_n\E[(W_{0,2n}^{\beta, 0,0})^p]<\infty$, hence $\pp(\beta)\geq p$. By taking $p\uparrow\p$ we obtain $\pp\geq\p$.
\end{proof}

Next, we prove the stability of $W^{\beta,\llambda}_n$ in the drift.
\begin{proof}[Proof of Corollary~\ref{cor:drift}]
	We assume $\p(\beta)\in(1+2/d,2]$. For any $p\in(1+2/d,\p)$, by Lemma~\ref{lem:core3} there exists $\lambda_0$ such that $\sup_n\E[(W_n^{\beta,\llambda})^p]<\infty$ holds for all $\llambda\in[-\lambda_0,\lambda_0]^d$, hence $\p(\beta,\llambda)\geq p$. The lower-semicontinuity follows by taking $p\uparrow\p$. For the second claim, note that $(W^{\beta,\llambda}_n)_{n\in\N}$ is again a martingale satisfying a zero-one law. Thus \\$\sup_n\E[(W_n^{\beta,\llambda})^p]<\infty$ for $p>1$ implies $W_\infty^{\beta,\llambda}>0$.
\end{proof}

Next, we prove the claim regarding the large deviation principle.

\begin{proof}[Proof of Corollary~\ref{cor:LDP}]
	The existence of the LDP is well-known, even in the strong disorder phase, see for example \cite[Chapter 9.2]{C17}. We compute the logarithmic moment generating function of $\mu_{\omega,n}^\beta(X_n\in\cdot)$,
	\begin{align*}
		\log \mu_{\omega_n}^{\beta}[e^{\llambda\cdot X_n}]=\log E[e^{\beta H_n(\omega,X)-n\lambda(\beta)+\lambda X_n}]-\log W^{\beta}_n=\log W^{\beta,\llambda}_n+n\varphi(\llambda)-\log W^{\beta}_n.
	\end{align*}
	By standard arguments based on concentration inequalities, for any $\llambda\in\R^d$, almost surely,
	\begin{align*}
		\lim_{n\to\infty}\frac{1}{n}\log W_n^{\beta,\llambda}=\lim_{n\to\infty}\frac{1}{n}\E[\log W_n^{\beta,\llambda}]\leq 0,
	\end{align*}
	where the last inequality is Jensen's inequality. If \eqref{eq:WD} holds then $W_n^\beta$ converges almost surely to a positive limit, hence
	\begin{align}
		\lim_{n\to\infty}\frac{1}{n}\log \mu_{\omega,n}^\beta[e^{\llambda\cdot X_n}]\leq \varphi(\llambda).\label{eq:dfsfsd}
	\end{align}
	The inequality $I^\beta(x)\geq I^0(x)$ follows from the G\"artner-Ellis Theorem, see for example \cite[Theorem~2.3.7]{DZ}.

	\smallskip Moreover, if \eqref{eq:WD} holds with bias $\llambda$, then $\lim_{n\to\infty}\frac{1}{n}\log W_n^{\beta,\llambda}=0$ almost surely and the inequality in \eqref{eq:dfsfsd} becomes an equality. Thus for $\p(\beta)\in(1+2/d,2]$ the equality of $I^\beta$ and $I^0$ in a neighborhood of the origin follows from \eqref{eq:weak_stable} and the G\"artner-Ellis Theorem~(note that the behavior of $I^\beta$ around zero is determined by the behavior of $\llambda\mapsto \lim_{n\to\infty}\frac 1n\log\mu_{\omega,n}^\beta[e^{\llambda\cdot X_n}]$ for small $\llambda$).

\smallskip 	On the other hand, if $\p(\beta)>2$ then the equality of $I^\beta$ and $I^0$ in a neighborhood of the origin has been observed in \cite[Exercise 9.1]{C17}, and for $\p(\beta)\in(1+2/d,2]$ we can apply Corollary~\ref{cor:drift} to conclude.
\end{proof}

Next, we prove the statements regarding the decay of the replica overlap.

\begin{proof}[Proof of Corollary~\ref{cor:I}]
	\textbf{Part (i)}: Note that since $I_n^{\beta,p}$ is decreasing in $p$, it is enough to check the claim for $p$ close to $1+2/d$. Assume $\p(\beta)>1+2/d$ and let $p\in(1+2/d,\p\wedge 2)$. Let $\alpha$ be as in Theorem~\ref{thm:local}. By standard large deviation estimates for the simple random walk, see for example \cite[Theorem~2.2.3(a)]{DZ}, there exists $c>0$ such that, for all $n\in\N$ and $|x|\geq \alpha n$,
	\begin{align}\label{eq:asdasdasdasd}
		\P\big(W^\beta_n[\1_{X_{n+1}=x}]\geq e^{-cn}\big)\leq e^{cn}P(X_{n+1}=x)\leq e^{-cn},
	\end{align}
	and hence, almost surely,
	\begin{align}\label{eq:distant}
		W^\beta_n[\1_{X_{n+1}=x}]\1_{|x|\geq \alpha n}\geq e^{-cn}\text{ for at most finitely many }n\in\N,x\in\Z^d.
	\end{align}
	We decompose
	\begin{align*}
		\sum_n I_n^{\beta,p}&=(W^\beta_n)^{-p}\sum_{n\in\N,x\in\Z^d}W^\beta_n[\1_{X_{n+1}=x}]^{p}\\
				    &=(W^\beta_n)^{-p}\left(\sum_{n\in\N,|x|>\alpha n}W^\beta_n[\1_{X_{n+1}=x}]^p+\sum_{n\in\N,|x|\leq \alpha n}(W_{0,n+1}^{\beta,0,x})^pP(X_{n+1}=x)^p\right).
	\end{align*}
	Due to \eqref{eq:distant}, the first sum is almost surely finite. For the second sum, we take expectation and apply \eqref{eq:local3},
	\begin{equation}\begin{split}\label{eq:ada}
		\sum_{n\in\N,|x|\leq \alpha n}\E\big[(W_{0,n+1}^{\beta,0,x})^p\big]P(X_{n+1}=x)^p&\leq C\sum_{n\in\N,x\in\Z^d}P(X_{n+1}=x)^p\\
												 &\leq C\sum_{n\in\N}\max_{x\in\Z^d}P(X_{n+1}=x)^{p-1},
\end{split}\end{equation}
which is finite due to Theorem~\ref{thmx:local}. For \textbf{part (ii)}, we first establish a bound for $I_{k,n}^{\beta,2}$ that does not depend on $n$. Indeed,
\begin{align}
	I_{k,n}^{\beta,2}&=(W_n^\beta)^{-2}\sum_x W^\beta_n[\1_{X_k=x}]^2\notag\\
	       &\leq \big(\sup_m (W_m^\beta)^{-2}\big)  \sum_x W^\beta_k[\1_{X_k=x}]^2(\sup_m W^\beta_m\circ\theta_{k,x})^2\notag\\
	       &\leq \big(\sup_m (W_m^\beta)^{-2}\big) \Big( e^{-2ck}\sum_{|x|>\alpha  k} (\sup_m W^\beta_m\circ\theta_{k,x})^2+\sum_{|x|\leq\alpha k} W^\beta_k[\1_{X_k=x}]^2(\sup_m W^\beta_m\circ\theta_{k,x})^2\Big),\label{eq:again}
\end{align}
where the last inequality holds due to \eqref{eq:distant} and is valid for all $k\geq k_0(\omega)$. It is enough to show that the last line is almost surely summable over $k$. To deal with the first sum, we estimate
\begin{align*}
	\E\Big[\Big(\sum_ke^{-2ck}\sum_{|x|\geq \alpha k} (\sup_m W^\beta_m\circ\theta_{k,x})^2\Big)^{1/2}\Big]&\leq
	\E\Big[\sum_ke^{-ck}\sum_{|x|\geq \alpha k} (\sup_m W^\beta_m\circ\theta_{k,x})\Big]\\
													       &= C\sum_ke^{-ck}k^d,
\end{align*}
where we used \eqref{eq:sa} and Theorem~\ref{thmx:p}(i). For the second sum in \eqref{eq:again}, we take $p\in(1+2/d,\p\wedge 2)$ and estimate
\begin{align*}
	&\E\Big[\Big(\sum_k\sum_{|x|\leq \alpha k } W^\beta_k[\1_{X_k=x}]^2(\sup_m W^\beta_m\circ\theta_{k,x})^2\Big)^{p/2}\Big]\\&\leq \E\Big[\sum_k\sum_{|x|\leq\alpha k} W^\beta_k[\1_{X_k=x}]^p(\sup_m W^\beta_m\circ\theta_{k,x})^{p}\Big]\\
												 &=\E\Big[\big(\sup_m W^\beta_m\big)^p\Big]\E\Big[\sum_{k\in\N,|x|\leq \alpha k}W^\beta_k[\1_{X_k=x}]^p\Big].
\end{align*}
The first factor is finite due to Doob's inequality and the second factor is almost the same as in \eqref{eq:ada}, hence finite.
\end{proof}

Finally, we prove the claim regarding the typical behavior of $I_n^{\beta,2}$.

\begin{proof}[Proof of Corollary~\ref{cor:typical}]
	Since $\max_x\mu_{\omega,n}^{\beta}(X_{n+1}=x)\leq (I_n^{\beta,2})^{1/2}$, it is enough to bound $(I_n^{\beta,2})^{1/2}$. Moreover, since $\sup_n (W^\beta_n)^{-2}$ is almost surely finite, it is enough to show that
	\begin{align*}
		\lim_{n\to\infty}	\P\Big(\sum_xW_n^\beta[\1_{X_{n+1}=x}]^2\geq n^{-d(1-\frac{1}{\p\wedge 2})+\eps}\Big)=0.
	\end{align*}
	Using \eqref{eq:asdasdasdasd} and a union bound, we obtain
	\begin{align*}
		\lim_{n\to\infty}\P\Big(\sum_{|x|\geq \alpha n}W^\beta_n[\1_{X_{n+1}=x}]^2\geq e^{-cn}\Big)=0.
	\end{align*}
	To estimate the remaining sum, we fix $p\in(1+2/d,\p(\beta)\wedge 2)$ and apply the Markov inequality,
	\begin{align}
		&\P\Big(\sum_{|x|\leq\alpha n}W^\beta_n[\1_{X_{n+1}=x}]^2\geq n^{-d(1-\frac{1}{\p\wedge 2})+\eps}\Big)\notag\\
		&\leq n^{\frac{d}2(p-\frac{p}{\p\wedge 2})-\frac{\eps p}2}\E\Big[\Big(\sum_{|x|\leq\alpha n}W^\beta_n[\1_{X_{n+1}=x}]^2\Big)^{p/2}\Big].
	\label{eq:asasdaxzz}
\end{align}
To bound the expectation, we apply \eqref{eq:sa} and argue as in \eqref{eq:ada} to obtain
\begin{align*}
	\E\Big[\Big(\sum_{|x|\leq\alpha n}W^\beta_n[\1_{X_{n+1}=x}]^2\Big)^{p/2}\Big] &\leq \sum_{|x|\leq\alpha n}\E\big[(W_{0,n+1}^{\beta,0,x})^{p}\big]P(X_{n+1}=x)^p\\
										      &\leq C\max_{x\in\Z^d}P(X_{n+1}=x)^{p-1}\\
										      &\leq Cn^{-\frac{d}{2}(p-1)}.
	\end{align*}
	By comparing with \eqref{eq:asasdaxzz}, we see that the exponent of $n$ is negative if we choose $p$ sufficiently close to $\p\wedge 2$.
\end{proof}

\section*{Appendix}
\renewcommand{\thesubsection}{A\arabic{subsection}}
\subsection{Discussion of hypercontractivity}\label{app:hyper}

We assume that $\P$ has finite support $S\subseteq\R$. Let us first make the setup more precise: let $\Omega=S^\Lambda$ denote the enlarged hypercube, where $\Lambda$ is some finite index set. In our application we will take $\Lambda=\{1,\dots,n\}\times\{-n,\dots,n\}^d$. Let $\pi$ be a probability measure with support $S$ and  $\P\coloneqq \bigotimes_{i\in\Lambda} \pi$.

\smallskip For $g\colon\Omega\to\R$ and $\rho\in[0,1]$, the \emph{noise operator} $T_\rho$ acts on $g$ by
 \begin{align*}
	 (T_{\rho}g)(\omega)&\coloneqq \E[g(\omega_\rho)|\sigma(\omega)],\quad\text{ where } (\omega_\rho)_{t,x}\coloneqq\begin{cases}\omega_{t,x}&\text{ with probability }\rho\\\omega'_{t,x}&\text{ with probability }1-\rho.\end{cases}
 \end{align*}
 and where $\omega'$ is a independent copy of $\omega$. The coordinates of $\omega_{\rho}$ are independent, so that $\omega_\rho$ has law $\P$ as well. Thus $T_\rho$ smoothes out the effect of individual coordinates of $\omega$ and we expect that $\omega\mapsto (T_\rho g)(\omega)$ is, in a sense, more well-behaved than $\omega\mapsto g(\omega)$. This effect of ``noise stability'' has been investigated quite actively recently, see \cite{O14} for an overview.

\smallskip Given $1<q<p<\infty$ and $\rho\in[0,1]$, we say that a hypercontractive inequality holds if, for all $g\colon\Omega\to\R$,
 \begin{align}\label{eq:hyper}
 \|T_\rho g\|_p\leq \|g\|_q.
 \end{align}
 Note that this inequality is always satisfied for $\rho=0$ and never for $\rho=1$ (unless  $g$ is constant). One natural question is to find, for given $p$ and $q$, the largest possible $\rho$ satisfying \eqref{eq:hyper}. For our purposes, we need a slightly weaker result:

 \begin{thmx}\label{thmx:myhyper}
Fix $\rho\in(0,1)$. For every $q>1$ there exists $p=p(q)>q$ such that \eqref{eq:hyper} holds for all $g\colon\Omega\to\R$ and for all finite $\Lambda$. Moreover, $q\mapsto \frac{p(q)}{q}$ is increasing.
\end{thmx}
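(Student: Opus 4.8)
The plan is to reduce the claim to the classical two-point hypercontractivity estimate on a finite alphabet by a tensorization argument. First I would recall that, for a single coordinate, one has the two-point inequality: for a probability measure $\pi$ on $\{0,\dots,L\}$ and the associated single-coordinate noise operator $T_\rho^{(1)}$ (coupling a variable with an independent copy through a Bernoulli($\rho$) switch), there is, for every $q>1$, a value $p_0=p_0(q,\pi,\rho)>q$ with $\|T_\rho^{(1)}g\|_p\le\|g\|_q$ for all $g:\{0,\dots,L\}\to\R$ and all $p\le p_0$. This is standard: hypercontractivity for a single finite probability space holds with some positive parameter $\rho_c(q,p)$ that is continuous in $(q,p)$ and equals $1$ only in the degenerate limit $q=p$, so for fixed $\rho<1$ one can solve for an admissible $p>q$. (See e.g. \cite{O14}; the relevant one-dimensional statement is a consequence of the log-Sobolev inequality on a finite space, or can be proven directly by a two-point computation.)

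Second, I would invoke the tensorization property of hypercontractivity: if $\|T_\rho^{(i)}g\|_{p}\le\|g\|_q$ holds for each factor $(\Omega_i,\pi_i)$ with the \emph{same} pair $(q,p)$ and the \emph{same} $\rho$, then the product operator $T_\rho=\bigotimes_i T_\rho^{(i)}$ on $\bigotimes_i\Omega_i$ also satisfies $\|T_\rho g\|_p\le\|g\|_q$. In our setting all factors are copies of the same $(\{0,\dots,L\},\pi)$, so a single threshold $p_0(q)=p_0(q,\pi,\rho)$, independent of $\Lambda$, works simultaneously for every finite index set $\Lambda$. Setting $p(q):=p_0(q)$ then gives the first assertion. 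One must check that the noise operator defined in the excerpt, which resamples each coordinate independently with probability $1-\rho$, is exactly the tensor product of the single-site operators — this is immediate from the product structure of $\P$ and the independence of the resampling across coordinates.

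Third, for the monotonicity of $q\mapsto p(q)/q$: here I would not try to track the exact optimal constant but instead argue structurally. The cleanest route is to observe that hypercontractivity is governed by a log-Sobolev-type constant, and that the admissible region $\{(q,p):1<q<p,\ \|T_\rho g\|_p\le\|g\|_q\ \forall g\}$ for a fixed base space and fixed $\rho$ is exactly $\{(q,p): \rho\le\rho_c(q,p)\}$ where, by Gross's equivalence / the standard two-point analysis, $\rho_c(q,p)$ depends on $(q,p)$ only through a function that is, for the symmetric model, $((q-1)/(p-1))^{1/2}$ and more generally is increasing in $q$ and decreasing in $p$ in the appropriate sense. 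Fixing $\rho$ and taking $p(q)$ to be the largest admissible $p$, a scaling/homogeneity consideration in the exponents (the condition involves $q$ and $p$ essentially through their ``reduced'' versions, and increasing $q$ relaxes the constraint by exactly a multiplicative amount in the relevant variable) yields that $p(q)/q$ is non-decreasing. Concretely, I would verify the inequality $p(\lambda q)\ge\lambda\,p(q)$ for $\lambda\ge1$ by plugging $\lambda q,\lambda p(q)$ into the defining inequality and using that the single-site hypercontractivity constant only improves under this joint rescaling of the exponents; tensorization then propagates this to all $\Lambda$.

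The main obstacle I anticipate is the monotonicity of $q\mapsto p(q)/q$: the existence part is a routine appeal to finite-alphabet hypercontractivity plus tensorization, but the monotonicity requires either an explicit (even if non-optimal) formula for an admissible $p(q)$ that manifestly scales the right way, or a soft argument via the log-Sobolev constant together with the precise dependence of $\rho_c(q,p)$ on $(q,p)$. I would aim for the former: produce a concrete choice of $p(q)$ — for instance of the form $p(q)=q+\kappa\cdot\min(q-1,1)$ for a small constant $\kappa=\kappa(\pi,\rho)>0$, or better, the $p$ solving $((q-1)/(p-1))^{1/2}=\rho$ after comparing $\pi$ to a symmetric two-point space via a bounded-density argument — and check directly that it is admissible (via comparison of log-Sobolev constants, since $\pi$ has full finite support) and that the resulting $p(q)/q$ is increasing by an elementary computation.
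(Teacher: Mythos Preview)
Your approach differs from the paper's. The paper does not use tensorization or log-Sobolev inequalities; it invokes a result of Ahlswede--G\'acs \cite{AG76} on
\[
s_p\coloneqq\min\bigl\{r\in[0,1]:\|T_\rho g\|_p\le\|g\|_{rp}\text{ for all }g\bigr\},
\]
which \cite{AG76} shows is independent of $|\Lambda|$ and strictly decreasing in $p$ with $s_1=1$. Setting $p(q)\coloneqq q/s_q$ gives $p(q)>q$ (since $s_q<1$) and $p(q)/q=1/s_q$ is increasing simply because $s_q$ is decreasing. Monotonicity thus comes for free, with no single-site analysis required.

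Your existence argument (single-site hypercontractivity via the log-Sobolev inequality on a finite space with full support, then tensorization) is correct and is a legitimate alternative route.

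For the monotonicity, however, your exposition is not yet clean. The right way to finish along your lines is to commit fully to Gross's theorem: on a single coordinate $T_\rho=e^{-tL}$ with $t=-\log\rho$ and $L$ the generator of the resample chain, which satisfies a log-Sobolev inequality with some constant $c=c(\pi)>0$; hence one may take $p(q)=1+(q-1)\rho^{-2/c}$, and tensorization preserves this. Then $p(q)/q=\rho^{-2/c}+(1-\rho^{-2/c})/q$ is increasing since $\rho^{-2/c}>1$. This also justifies your scaling heuristic $p(\lambda q)\ge\lambda\,p(q)$: the constraint depends on $(q,p)$ only through $(p-1)/(q-1)$, and one checks $(\lambda p-1)/(\lambda q-1)\le(p-1)/(q-1)$ for $\lambda\ge1$, $p>q$. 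By contrast, your first concrete candidate $p(q)=q+\kappa\min(q-1,1)$ fails: for $q>2$ it gives $p(q)/q=1+\kappa/q$, which is decreasing. The ``bounded-density comparison to a symmetric two-point space'' is vague and unnecessary once the LSI formula is in hand.
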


The hypercontractive inequality has been studied most intensively in the case $S=\{0,1\}$ and $\pi=\frac{1}{2}\delta_0+\frac{1}{2}\delta_1$, i.e., the marginals have a  symmetric Bernoulli distribution. In that case, it is known that, for given $1<q<p<\infty$, \eqref{eq:hyper} holds for all $\rho\leq (\frac{q-1}{p-1})^{1/2}$, see \cite{B70} or the discussion in \cite[Chapter 9]{O14}. Since $(\frac{q-1}{p-1})^{1/2}$ converges to one as $p\downarrow q$, we can prove Theorem~\ref{thmx:myhyper} in the symmetric Bernoulli case in this way.

\smallskip However, it seems that a generalization to biased Bernoulli distributions is not available. Namely, in that case the optimal $\rho$ has been established if either $p$ or $q$ equals $2$, see \cite[Chapter 10]{O14} and the references therein, but we did not find suitable references for the case $q<p<2$. In that direction, the most relevant work is \cite{W07}: they obtain certain bounds on the optimal $\rho$ in \eqref{eq:hyper}, but those bounds are of asymptotic nature and do not give sufficient information in the regime $p\downarrow q$. We note that they also show \cite[Theorem~3.1]{W07} that the general case $|S|>1$ can be reduced to the case  $|\Lambda|=|S|=1$, i.e., (biased) Bernoulli distributions.

\smallskip We now explain how Theorem~\ref{thmx:myhyper} can be derived from the result in \cite{AG76}, which comes from a different context. The downside of this approach is that it does not give an explicit expression for $p=p(q,\rho)$.

\begin{proof}[Proof of Theorem~\ref{thmx:myhyper}]
Fix $\rho\in(0,1)$. In \cite{AG76}, they regard $\omega$ and $\omega_\rho$ as two steps of a Markov chain on $\Omega$ with transition semigroup $T_\rho$ and study the quantity
\begin{align}\label{eq:sp}
	s_p\coloneqq \min\left\{r\in[0,1]\colon\E\big[\big((T_\rho g)(\omega)\big)^p\big]^{1/p}\leq \E\big[g(\omega_\rho)^{rp}\big]^{\frac1{rp}}\ \forall g\colon\Omega\to\R\right\}.
\end{align}
In our case, as noted above, it holds that $\omega\isDistr\omega_\rho$ and thus the definition of $s_p$ implies that \eqref{eq:hyper} holds with $q$ equal to $ps_p$. Note that $s_\rho\geq \frac 1p$: indeed, if $r\leq \frac 1p$, then
\begin{align*}
\E\big[\big((T_\rho g)(\omega)\big)^p\big]^{1/p}&\geq \E\big[\big((T_\rho g)(\omega)\big)^{rp}\big]^{1/rp}\\
&=\E\big[\E\big[g(\omega_\rho)\big|\sigma(\omega)]^{rp}\big]^{1/rp}\geq \E[g(\omega_\rho)^{rp}]^{1/rp}.
\end{align*}
The first inequality is H\"older's inequality and the second inequality is the conditional Jensen inequality, using $rp<1$. Moreover, the first inequality is strict unless $T_\rho g$ is constant, which only happens if either $\rho=0$, $g$ is constant or $\pi$ is a Dirac measure.

\smallskip Note also that $(\omega,\omega_\rho)$ is an indecomposable Markov chain in the sense given before \cite[Theorem~1]{AG76}. Thus, by \cite[Theorems~2 and 3]{AG76}, we obtain that $s_p$ is independent of $|\Lambda|$ and that $p\mapsto s_p$ is strictly decreasing with $s_1=1$.

\smallskip We now claim that we can choose $p\coloneqq q/s_q$ to conclude. Indeed, due to the strict monotonicity, $s_q<1$ and thus  $p>q$. Moreover, since $s_p<s_q$, we can take $r$ equal to $s_q$ in \eqref{eq:sp} to get
\begin{align*}
	\E[(T_\rho g)^p]^{1/p}\leq \E[g^{ps_q}]^{1/(ps_q)}=\E[g^q]^{1/q}.
\end{align*}
The final claim is clear from $\frac{p(q)}{q}=1/s_q$.
\end{proof}

It is conceivable that the result in \cite{AG76} generalizes to certain environments without finite support. Indeed, the assumption in \cite[Theorem~1]{AG76} is that $\P(\omega_{t,x}=a,(\omega_\rho)_{t,x}=b)>0$ for all $a,b$ in the support of $\P$, and this assumption continuous to hold (in the sense of densities) if, for example, $\P$ has a Lebesgues density on a compact interval which is bounded away from $0$ and $\infty$. Nonetheless, the use of hypercontractivity always introduces some technical assumptions on the environment and thus it is desirable to find a more robust proof for Theorem~\ref{thm:p}(ii).

\subsection{Hypercontractivity for partition functions}\label{app:21_2}

To apply the hypercontractive inequality to the partition function, we recall the following result from \cite{J21_2}, which  relates $T_{\rho}W_n^{\beta}$ to $W_n^{\beta'}$ for some $\beta'<\beta$.

\begin{thmx}[{\cite[Theorem~2.6]{J21_2}}]\label{thmx:alea}
If the environment is both upper and lower bounded, i.e., there exists $K>0$ such that
\begin{align*}
	\P\big(\omega_{0,0}\in[-K,K]\big)=1,
\end{align*}
then for any $0<\beta_0<\beta$ there exists $C>0$ such that, for all $\beta'\in[\beta_0,\beta]$, $n\in\N$ and $p\geq 1$,
	\begin{align*}
		\E\big[(W^{\beta'}_n)^p\big]\leq \E\Big[\big(T_{1-C(1-\frac{\beta'}{\beta})} W^\beta_n\big)^p\Big].
	\end{align*}
\end{thmx}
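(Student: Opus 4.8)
The plan is to trade the $L^p$-comparison for a single-site comparison in the \emph{convex order} $\preceq_{\mathrm{cx}}$, isolating everything analytic into one elementary inequality. Abbreviate $f(w):=e^{\beta' w-\lambda(\beta')}$ and, for $\rho\in[0,1]$, $h_\rho(w):=\rho\,e^{\beta w-\lambda(\beta)}+(1-\rho)$, and note $\E[f(\omega_{0,0})]=\E[h_\rho(\omega_{0,0})]=1$. The first step is to make the noise operator explicit: since the coordinates of $\omega_\rho$ are independent and, along a fixed path, the space-time sites $(i,X_i)$ are distinct, conditioning on $\sigma(\omega)$ and using Fubini gives
\[
	T_\rho W_n^\beta(\omega)=E\Big[\prod_{i=1}^n\big(\rho\,e^{\beta\omega_{i,X_i}-\lambda(\beta)}+(1-\rho)\big)\Big]=\Psi\big((h_\rho(\omega_{i,x}))_{(i,x)}\big),
\]
where $\Psi(\vec v):=E\big[\prod_{i=1}^n v_{i,X_i}\big]=\sum_{\pi}P(X=\pi)\prod_{i=1}^n v_{i,\pi(i)}$ is multilinear in its arguments with non-negative coefficients; likewise $W_n^{\beta'}(\omega)=\Psi\big((f(\omega_{i,x}))_{(i,x)}\big)$ with the \emph{same} functional $\Psi$.

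The second step is a general swapping principle: if $U\preceq_{\mathrm{cx}}V$ are non-negative bounded random variables, $\Psi$ is multilinear with non-negative coefficients in finitely many variables, and $\Phi$ is convex on $[0,\infty)$, then $\E[\Phi(\Psi((U_{i,x})))]\le\E[\Phi(\Psi((V_{i,x})))]$ for i.i.d. copies $(U_{i,x}),(V_{i,x})$. One proves this by replacing the $U$-coordinates by $V$-coordinates one at a time: fixing all other coordinates (which, being $f$- or $h_\rho$-values, are non-negative, so $\Psi$ stays non-negative), $\Psi$ is affine in the swapped coordinate with non-negative slope, hence $t\mapsto\Phi(\Psi(\dots,t,\dots))$ is convex, and the swap does not decrease the expectation by the very definition of $\preceq_{\mathrm{cx}}$ together with independence; summing the finitely many one-step inequalities gives the claim. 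Applying it with $U=f(\omega_{0,0})$, $V=h_\rho(\omega_{0,0})$ and $\Phi(t)=t^p$ ($p\ge1$, convex) reduces Theorem~\ref{thmx:alea} to the single-site statement $f(\omega_{0,0})\preceq_{\mathrm{cx}}h_\rho(\omega_{0,0})$ for a suitable $\rho=1-C(1-\beta'/\beta)$, uniformly over $\beta'\in[\beta_0,\beta]$. Crucially, the resulting constant $C$ is then automatically the same for \emph{all} $p\ge1$.

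The third step verifies this convex order. Since $f(\omega)$ and $h_\rho(\omega)$ have equal mean $1$ and both are increasing functions of $\omega\in[-K,K]$, by the Karlin--Novikoff cut criterion it suffices that the difference $g(w):=h_\rho(w)-f(w)$ exhibit, on the support of $\P$, a single sign change from $-$ to $+$. Now $g'(w)=\rho\beta e^{\beta w-\lambda(\beta)}-\beta'e^{\beta'w-\lambda(\beta')}$ has exactly one zero and is negative before it and positive after it, so $g$ has a unique critical point, a minimum; since $g(w)\to1-\rho>0$ as $w\to-\infty$ and $g(w)\to+\infty$ as $w\to+\infty$, the sign pattern of $g$ on $\R$ is $+,-,+$, and the only point to ensure is that $[-K,K]$ lies to the right of the first zero, i.e. $g(-K)\le0$. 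This is precisely
\[
	\rho\ =\ 1-C\big(1-\tfrac{\beta'}{\beta}\big)\ \ge\ \frac{e^{-\beta'K-\lambda(\beta')}-1}{e^{-\beta K-\lambda(\beta)}-1},
\]
where the right-hand side — a ratio of two negative numbers whose numerator is closer to $0$ — lies in $(0,1)$, equals $1$ at $\beta'=\beta$, and approaches it at linear rate in $\beta-\beta'$ (a first-order expansion using $\lambda'\le K$). Hence $\beta'\mapsto\big(1-\tfrac{e^{-\beta'K-\lambda(\beta')}-1}{e^{-\beta K-\lambda(\beta)}-1}\big)/(1-\beta'/\beta)$ is continuous and strictly positive on $[\beta_0,\beta)$ with a positive limit at $\beta$, so it is bounded below by some $C>0$; this $C$ also keeps $\rho\in[0,1)$ and works for every $\beta'\in[\beta_0,\beta]$. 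Karlin--Novikoff then yields $f(\omega_{0,0})\preceq_{\mathrm{cx}}h_\rho(\omega_{0,0})$, completing the argument.

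The main obstacle — and the reason the $L^2$-type arguments do not suffice — is that the naive pathwise bound $W_n^{\beta'}\le T_\rho W_n^\beta$ is simply false: it already fails at the single site $w=\lambda(\beta)/\beta$, where $h_\rho(w)=1<f(w)$ (since $\lambda(\cdot)/\cdot$ is increasing). Thus one genuinely has to pass through the convex order, and the delicate point becomes the uniform, $p$-free choice of the constant $C$ over the whole range $\beta'\in[\beta_0,\beta]$, which is exactly what the reduction above delivers.
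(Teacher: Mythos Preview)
The paper does not prove this statement; it is quoted verbatim from \cite[Theorem~2.6]{J21_2}, so there is no in-paper proof to compare against. Your argument is correct and self-contained, and in fact uses only boundedness of the environment (not the stronger finite-support hypothesis under which the paper introduces $T_\rho$).

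A few remarks on the details. Your formula $T_\rho W_n^\beta(\omega)=E_X\big[\prod_{i=1}^n h_\rho(\omega_{i,X_i})\big]$ is obtained exactly as you say, by Fubini and the fact that for a fixed path the sites $(i,X_i)$ are distinct in the time coordinate. The coordinate-wise swapping step is valid because at each stage the remaining coordinates are independent of the one being swapped, and multilinearity with non-negative coefficients (together with non-negativity of all the $f$- and $h_\rho$-values) makes $t\mapsto\Phi\big(A+Bt\big)$ convex for each frozen configuration; this is precisely the tensorisation of the convex order. For the single-site step, the cut criterion you invoke is usually stated in terms of distribution functions; here it goes through because both $f$ and $h_\rho$ are strictly increasing on $[-K,K]$, so the sign change of $g=h_\rho-f$ translates into a single crossing of the CDFs of $f(\omega_{0,0})$ and $h_\rho(\omega_{0,0})$, which together with the equal means yields $f(\omega_{0,0})\preceq_{\mathrm{cx}}h_\rho(\omega_{0,0})$. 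Finally, your choice of $C$ is legitimate: the map $\beta'\mapsto(1-R(\beta'))/(1-\beta'/\beta)$, with $R(\beta')$ your threshold ratio, is continuous and positive on $[\beta_0,\beta)$ with limit
\[
\frac{\beta\,e^{-\beta K-\lambda(\beta)}\,(K+\lambda'(\beta))}{1-e^{-\beta K-\lambda(\beta)}}>0
\]
at $\beta'=\beta$ (using $\lambda'(\beta)>-K$ for non-degenerate $\omega$), so a uniform positive lower bound exists; taking $C$ no larger than this bound and no larger than $\beta/(\beta-\beta_0)$ keeps $\rho\in[0,1]$ and gives the desired inequality simultaneously for all $p\ge 1$.
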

 \subsection{Local limit theorem for biased random walk}\label{app:local}
 In this section, we consider the local limit theorem for the biased random walk $P^\llambda$. We refer to \cite[Chapter 2]{LL10} for a discussion of this well-known result. For our purposes, we need the following statement:
	\begin{thmx}\label{thmx:local}
		For every $M>1$, there exists $C>0$ such that, for all $\llambda\in [-1,1]^d$ and $n\in\N$,
	\begin{align}
		\sup_{x\in\Z^d}P^{\llambda}(X_n=x)&\leq Cn^{-d/2}\label{eq:local_max},\\
		\inf_{\substack{x\in n\m(\llambda)+[-Mn^{1/2},Mn^{1/2}]^d\\(n,x)\leftrightarrow (0,0)}}P^{\llambda}(X_n=x)&\geq C^{-1}n^{-d/2}.\label{eq:LCLT_lower}
	\end{align}
\end{thmx}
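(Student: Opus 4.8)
The plan is to prove both bounds by Fourier inversion, working with the characteristic function of the increment
$\phi_\llambda(\theta)\coloneqq E^\llambda[e^{i\theta\cdot X_1}]=\frac{1}{2d}\sum_{z:|z|_1=1}e^{\llambda\cdot z-\varphi(\llambda)}e^{i\theta\cdot z}$,
so that $P^\llambda(X_n=x)=(2\pi)^{-d}\int_{[-\pi,\pi]^d}e^{-i\theta\cdot x}\phi_\llambda(\theta)^n\,\dd\theta$. Everything reduces to making the standard local CLT estimates uniform over the compact parameter set $\llambda\in[-1,1]^d$, which is possible because $\llambda\mapsto\phi_\llambda(\theta)$ is jointly smooth and, since the increment of $P^\llambda$ always charges all of $\{\pm e_1,\dots,\pm e_d\}$, the covariance matrix $\Sigma_\llambda\coloneqq\mathrm{Cov}^\llambda(X_1)$ is positive definite with eigenvalues bounded away from $0$ and from $\infty$ uniformly in $\llambda$. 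Because $|z|_1=1$ for every increment one has $\phi_\llambda(\theta+\pi\mathbf 1)=-\phi_\llambda(\theta)$, so $|\phi_\llambda|$ attains its maximum value $1$ on $[-\pi,\pi]^d$ exactly at $\theta=0$ and $\theta=\pi\mathbf 1$; a compactness argument then produces constants $c,\delta,\eps>0$, uniform in $\llambda$, with $|\phi_\llambda(\theta)|\le e^{-c|\theta-\theta^*|^2}$ whenever $\theta$ is within distance $\eps$ of a maximum $\theta^*\in\{0,\pi\mathbf 1\}$, and $|\phi_\llambda(\theta)|\le 1-\delta$ elsewhere.

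For \eqref{eq:local_max} I would simply estimate $P^\llambda(X_n=x)\le(2\pi)^{-d}\int_{[-\pi,\pi]^d}|\phi_\llambda(\theta)|^n\,\dd\theta$, split the integral into the two $\eps$-balls around $0$ and $\pi\mathbf 1$ and the complement, bound each ball by $\int e^{-cn|\eta|^2}\,\dd\eta\le Cn^{-d/2}$ and the complement by $(1-\delta)^n$; together with the trivial bound $P^\llambda(X_n=x)\le 1$ for the finitely many small $n$ this gives the claim.

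For \eqref{eq:LCLT_lower} I would carry out the full local expansion. Writing $\psi_\llambda(\theta)\coloneqq e^{-i\theta\cdot\m(\llambda)}\phi_\llambda(\theta)$ for the characteristic function of the centered increment, so that $P^\llambda(X_n=x)=(2\pi)^{-d}\int e^{-i\theta\cdot(x-n\m(\llambda))}\psi_\llambda(\theta)^n\,\dd\theta$, and substituting $\theta=s/\sqrt n$, the ball around $0$ contributes, uniformly in $\llambda$ (by the uniform quadratic bound and uniform control of the cubic Taylor remainder of $\log\psi_\llambda$), the term $n^{-d/2}g_\llambda\big((x-n\m(\llambda))/\sqrt n\big)+o(n^{-d/2})$, where $g_\llambda$ is the centered Gaussian density with covariance $\Sigma_\llambda$; the ball around $\pi\mathbf 1$ contributes the same quantity times $(-1)^{|x|_1+n}$, which follows from $\psi_\llambda(\pi\mathbf 1+\eta)=-e^{-i\pi\,\mathbf 1\cdot\m(\llambda)}\psi_\llambda(\eta)$ and $\mathbf 1\cdot x\equiv|x|_1\ (\mathrm{mod}\ 2)$; and the complement is $O((1-\delta)^n)$. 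Hence $P^\llambda(X_n=x)=\big(1+(-1)^{|x|_1+n}\big)n^{-d/2}\big(g_\llambda((x-n\m(\llambda))/\sqrt n)+o(1)\big)$ with the $o(1)$ uniform in $x$ and in $\llambda\in[-1,1]^d$. When $(n,x)\leftrightarrow(0,0)$ the prefactor equals $2$, and for $x\in n\m(\llambda)+[-Mn^{1/2},Mn^{1/2}]^d$ the argument lies in $[-M,M]^d$, on which $g_\llambda\ge c_M>0$ uniformly in $\llambda$ by the eigenvalue bounds on $\Sigma_\llambda$; this gives $P^\llambda(X_n=x)\ge c_Mn^{-d/2}$ for $n\ge n_0$ and the required range of $x$. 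The finitely many $n<n_0$ are handled directly: since $P^\llambda$ and $P$ are mutually absolutely continuous on $\{0,\dots,n\}$, $P^\llambda(X_n=x)>0$ whenever $(n,x)\leftrightarrow(0,0)$, and since $\m(\llambda)$ is bounded on $[-1,1]^d$ the admissible $x$ range over a fixed finite set, so the infimum of the positive, jointly continuous map $\llambda\mapsto P^\llambda(X_n=x)$ over this finite set and over the compact parameter set is positive; absorbing it into $C$ completes the proof.

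The step I expect to be the main obstacle is the periodicity bookkeeping in \eqref{eq:LCLT_lower}: one must track both Fourier maxima and check that the two Gaussian bumps reinforce each other precisely on the set $\{(n,x):(n,x)\leftrightarrow(0,0)\}$, all while keeping every error term uniform in $\llambda$. An alternative that sidesteps this is to reduce to an aperiodic walk exactly as in the proof of Lemma~\ref{lem:lclt}(ii) — for even $n$ pass to $Y_k=X_{2k}/2$, for odd $n$ peel off one step — apply \cite[Theorem 2.3.11]{LL10} at fixed parameter, and upgrade to uniformity in $\llambda$ by the same compactness argument.
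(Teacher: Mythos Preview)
Your proposal is correct and follows the same Fourier-inversion template as the paper, with the same mechanism for uniformity in $\llambda$ (joint continuity plus compactness of $[-1,1]^d$, yielding uniform eigenvalue bounds on $\Sigma_\llambda$ and a uniform gap $|\phi_\llambda|\le 1-\delta$ away from the maxima). The one genuine difference is in how periodicity is handled: the paper passes at the outset to the aperiodic walk $Y_k=X_{2k}/2$, does the whole Fourier analysis for $Y$, and then recovers odd times by peeling off one step---precisely the alternative you describe in your last paragraph. You instead keep the period-$2$ walk and track both maxima $\{0,\pi\mathbf 1\}$ of $|\phi_\llambda|$ directly, using $\phi_\llambda(\theta+\pi\mathbf 1)=-\phi_\llambda(\theta)$ to see that the two Gaussian contributions add with sign $(-1)^{n+|x|_1}$ and hence reinforce exactly on the parity lattice $(n,x)\leftrightarrow(0,0)$. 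Both routes are standard and equally rigorous; yours is arguably cleaner conceptually (no auxiliary walk, no separate odd-$n$ bookkeeping), while the paper's avoids the two-peak accounting altogether. Your treatment of small $n$ by compactness is also fine and is something the paper leaves implicit.
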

For a fixed value of $\llambda$, it is not difficult to find references for Theorem~\ref{thmx:local}. Most works require $(X,P^\llambda)$ to be centered, aperiodic and  $\Z^d$-valued, but a version that applies to our setup can be found in \cite[Theorem~22.1]{BR10}. Most references furthermore do not provide the uniformity in $\llambda$ that we desire, although a comment in that direction is given in \cite[second comment on p. 36]{LL10}. However, upon inspection it becomes clear that the proofs can be made uniform in $\llambda$, and we will now explain how that can be checked with the help of estimates provided in \cite{LL10}.
\begin{proof}
We write $\psi_Z^\llambda(\t)\coloneqq E^\llambda[e^{iZ\cdot \t}]$ for the characteristic function of an $\R^d$-valued random variable $(Z,P^\llambda)$. To deal with the aperiodicity of $(X_n)_{n\in\N}$, we first consider the theorem for even times. Let $Y_i\coloneqq X_{2i}/2$ and note that the increments $(Y_i-Y_{i-1})_{i\geq 0}$ are $\Z^d$-valued random variable with a lattice distribution of span $1$. By \cite[Proposition 2.2.2]{LL10}, for any $x\in\Z^d$,
	\begin{align*}
		P^{\llambda}(Y_1=x)&=\frac{1}{(2\pi)^d}\int_{[-\pi,\pi]^d}\psi^{\llambda}_{Y_1}(\t)e^{-ix\t}\dd\t.
	\end{align*}
For any $\delta\in(0,\pi)$, we can now write
	\begin{align*}
		P^\llambda\big(Y_n=n\m(\llambda)+x\big)&=\frac{1}{(2\pi)^d}\int_{[-\pi,\pi]^d}\psi^{\llambda}_{Y_n}(\t)e^{-ix\cdot\t-in\m(\llambda)\cdot\t}\dd\t\\
&=I_1(x)+\frac{1}{(2\pi)^d}\int_{[-\delta,\delta]^d}\psi^{\llambda}_{Y_n}(\t)e^{-ix\cdot\t-in\m(\llambda)\cdot\t}\dd\t\\
						       &=I_1(x)+\frac{1}{(2\pi)^d}\int_{[-\delta,\delta]^d}\psi^{\llambda}_{Y_1-\m(\llambda)}(\t)^ne^{-ix\cdot\t}\dd\t\\
							&=I_1(x)+\frac{1}{(2\pi n^{1/2})^d}\int_{[-\delta n^{1/2},\delta n^{1/2}]^d}\psi^{\llambda}_{Y_1-\m(\llambda)}(\t/n^{1/2})^ne^{-ix\cdot\t/n^{1/2}}\dd\t\\
							&=I_1(x)+I_2(x)+I_3(x),
	\end{align*}
	where $I_2(x)$ and $I_3(x)$ are defined by changing the domain of integration to \\$[-\delta n^{1/2},\delta n^{1/2}]^d\setminus[-n^{1/8},n^{1/8}]^d$ and $[-n^{1/8},n^{1/8}]^d$.

	\smallskip By \cite[Lemma 2.3.2]{LL10}, it holds that $\psi^{\llambda}_{Y_1}(\t)<1$ for all $\t\in[-\pi,\pi]^d\setminus\{0\}$ (note that the lemma assumes that $Y_1$ is centered, but this is not used in the proof). Since $(\t,\llambda)\mapsto\varphi^\llambda_{Y_1}(\t)$ is jointly continuous, there exists $\eps\in(0,1)$ such that, for all $\llambda\in[-1,1]^d$ and $\t\in[-\pi,\pi]^d\setminus[-\delta,\delta]^d$
	\begin{align*}
		|\psi^\llambda_{Y_1}(\t)|\leq 1-\eps
	\end{align*}
and therefore $|I_1(x)|\leq C(1-\eps)^n$.

\smallskip Next, we show that $|I_2(x)|$ decays at a stretched exponential rate. To this end, let $\Sigma_\llambda\coloneqq \E[(Y_1-\m(\llambda))(Y_1-\m(\llambda))^T]$ be the covariance matrix. Note that since $\Sigma_\llambda$ is positive definite, all eigenvalues are positive and they depend continuously on $\llambda$. Hence there exists $r>1$ such that, for all $\t\in\R^d$ and $\llambda\in[-1,1]^d$,
	\begin{align}\label{eq:sigma}
		r^{-1}|\t|\leq t^T\Sigma_\llambda t\leq r|\t|^2.
	\end{align}
Furthermore, by applying \cite[Proposition 2.2.1(d)]{LL10} with $m=3$, there exists $C>0$ such that, for all $\t\in\R^d$ and $\llambda\in[-1,1]^d$,
\begin{align}\label{eq:taylor}
		\Big|\underbrace{\psi^\llambda_{Y_1-\m(\llambda)}(\t)-1+\frac{1}{2}\t^T\Sigma_\llambda \t}_{\eqqcolon e^\llambda(\t)}\Big|\leq R|\t|^3.
	\end{align}
Thus, if we choose $\delta\leq \frac{1}{4rR}$, then for all $|\t|\leq \delta n^{1/2}$ and $\llambda\in[-1,1]^d$,
\begin{align*}
	|\psi_{Y_1-\m(\llambda)}(\t/\sqrt n)|&=\Big|1-\frac{1}{2n}\t^T\Sigma_\llambda \t+e^{\llambda}(\t/\sqrt n)\Big|\\
					       &\leq 1-\frac{1}{2rn}|\t|^2+\frac R{n^{3/2}}{|\t|^3}\\
					       &\leq 1-\frac{r}{4rn}|\t|^2.
\end{align*}
Using the inequality $1-z\leq e^{-z}$, we therefore get
\begin{align*}
	|I_2(x)|&\leq \frac{1}{(2\pi n^{1/2})^d}\int_{[-\delta n^{1/2},\delta n^{1/2}]^d\setminus [-n^{1/8},n^{1/8}]^d}|\psi^{\llambda}_{Y_1-\m(\llambda)}(\t/\sqrt n)|^n\dd \t\\
	     &\leq\frac{1}{(2\pi n^{1/2})^d}\int_{[-\delta n^{1/2},\delta n^{1/2}]^d\setminus [-n^{1/8},n^{1/8}]^d}e^{-\frac{1}{4r}|\t|}\dd \t\\
	     &\leq Ce^{-\frac{1}{4r} n^{1/8}}.
\end{align*}
To conclude, we now introduce
\begin{align*}
	\widetilde I_3(x)\coloneqq \frac{1}{(2\pi n^{1/2})^d}\int_{[-n^{1/8},n^{1/8}]^d}e^{-\frac{1}{2}\t^T\Sigma_\llambda \t-ix\cdot\t/n^{1/2}}\dd \t
\end{align*}
and show that there exists $C>0$ such that, for all $\llambda\in[-1,1]^d$ and $x\in[-Mn^{1/2},Mn^{1/2}]^d\cap\Z^d$,
\begin{align}
	|I_3(x)-\widetilde I_3(x)|&\leq Cn^{-d/2-1/2},\label{eq:s1}\\
	C^{-1}n^{-d/2}\leq I_3(x) &\leq Cn^{-d/2}\label{eq:s2}.
\end{align}
Since we have proved that $|I_1(x)|$ and $|I_2(x)|$ are much smaller than $n^{-d/2}$ and since the error in \eqref{eq:s1} is also smaller than $n^{-d/2}$, we thus conclude that $P^\llambda(Y_n=x)$ is of order $n^{-d/2}$, for all $\llambda\in[-1,1]^d$ and $x\in[-Mn^{1/2},Mn^{1/2}]^d\cap\Z^d$.

\smallskip To prove \eqref{eq:s2}, we note that, using \eqref{eq:sigma}, we can change the domain of integration in the definition of $\widetilde{I}_3(x)$ from $[-n^{1/8},n^{1/8}]^d$ to $\R^d$ with an error term that decays at a stretched exponential rate. On the other hand, by \cite[display (2.2)]{LL10}, the resulting expression is equal to
\begin{align*}
	\frac{1}{(2\pi n^{1/2})^d\sqrt{\det\Sigma_\llambda}}e^{-\frac{1}{2n}x^T\Sigma_\llambda x}.
\end{align*}
Using again \eqref{eq:sigma}, it is not difficult to see that this expression is of order $n^{-d/2}$, uniformly in $[-Mn^{1/2},Mn^{1/2}]^d$ and $\llambda\in[-1,1]^d$.

\smallskip To prove \eqref{eq:s1}, we use \cite[display (12.3)]{LL10}, \eqref{eq:sigma} and \eqref{eq:taylor} to see that, for all $|\t|\leq n^{1/8}$ and $\llambda\in[-1,1]^d$,
\begin{align*}
	\psi^\llambda_{Y_1-\m(\llambda)}(\t/\sqrt n)^n&=\Big(1-\frac{1}{2n}\t^T\Sigma_\llambda\t+e^{\llambda}(\t/\sqrt n)\Big)^n\\
							 &=e^{-\frac{1}{2}\t^T\Sigma_\llambda \t+ne^{\llambda}(\t/\sqrt n)+O(|\t|^2/n+ne^{\llambda}(\t/\sqrt n)^2)}\\
							 &=e^{-\frac{1}{2}\t^T\Sigma_\llambda \t}\big(1+O(|\t|^3/n^{1/2}+|\t|^2/n+|\t|^6/n^{2})\big),
\end{align*}
Thus
\begin{align*}
	|I_3(x)-\widetilde I_3(x)|&\leq \frac{C}{(2\pi n^{1/2})^d}\int_{[-n^{1/8},n^{1/8}]^d}e^{-\frac{1}{2}\t^T\Sigma_\llambda \t}(|\t|^2/n+|\t|^3/n^{1/2}+|\t|^6/n^{2})\dd \t\\
				  &\leq Cn^{-d/2-1/2}\int_{\R^d}e^{-\frac{1}{2}\t^T\Sigma_\llambda \t}(|\t|^2+|\t|^3+|\t|^6)\dd \t.
\end{align*}
Using again \eqref{eq:sigma}, the last integral can be bounded independently of $\llambda\in[-1,1]^d$.

\smallskip We have shown that $P^\llambda(Y_n=x)$ is of order $n^{-d/2}$, uniformly in $\llambda\in[-1,1]^d$ and $x\in[-Mn^{1/2},Mn^{1/2}]^d\cap\Z^d$, and it only remains to derive the conclusion in term of the original random walk $X$. For $n$ even, \eqref{eq:local_max} and \eqref{eq:LCLT_lower} immediately follow. For odd $n=2k+1$, we observe
\begin{align*}
	P^\llambda(X_{2k}=x+e_1)\P^{\llambda}(X_1=e_1)\leq P^\llambda(X_{2k+1}=x)\leq \max_{|y|_1=1} P^\llambda(X_{2k}=x+y),
\end{align*}
so the conclusion follows since $P^\llambda(X_1=y)$ is bounded away from zero for $\llambda\in[-1,1]^d$ and $|y|_1=1$.
\end{proof}

\section*{Acknowledgement}
We are grateful to Ryoki Fukushima for many interesting discussions in the course of this research and to Shuta Nakajima for helpful comments on an earlier version of this manuscript. We are also grateful to two anonymous referees for their detailed and valuable feedback to this manuscript.

\bibliographystyle{plain}
\bibliography{ref.bib}

\begin{thebibliography}{10}

\bibitem{AG76}
Rudolf Ahlswede and P\'{e}ter G\'{a}cs.
\newblock Spreading of sets in product spaces and hypercontraction of the
  {M}arkov operator.
\newblock {\em Ann. Probab.}, 4(6):925--939, 1976.

\bibitem{AS96}
Sergio Albeverio and Xian~Yin Zhou.
\newblock A martingale approach to directed polymers in a random environment.
\newblock {\em J. Theoret. Probab.}, 9(1):171--189, 1996.

\bibitem{BD10}
Martin.~T. Barlow and Jean-Dominique Deuschel.
\newblock Invariance principle for the random conductance model with unbounded
  conductances.
\newblock {\em Ann. Probab.}, 38(1):234--276, 2010.

\bibitem{BB07}
Noam Berger and Marek Biskup.
\newblock Quenched invariance principle for simple random walk on percolation
  clusters.
\newblock {\em Probab. Theory Related Fields}, 137(1-2):83--120, 2007.

\bibitem{BCR16}
Noam Berger, Moran Cohen, and Ron Rosenthal.
\newblock {Local limit theorem and equivalence of dynamic and static points of
  view for certain ballistic random walks in i.i.d. environments}.
\newblock {\em Ann. of Probab.}, 44(4):2889 -- 2979, 2016.

\bibitem{BCL23}
Quentin Berger, Carsten Chong, and Hubert Lacoin.
\newblock The stochastic heat equation with multiplicative l{\'e}vy noise:
  Existence, moments, and intermittency.
\newblock {\em Comm. Math. Phys.}, 2023.

\bibitem{BL21}
Quentin Berger and Hubert Lacoin.
\newblock The scaling limit of the directed polymer with power-law tail
  disorder.
\newblock {\em Comm. Math. Phys.}, 386(2):1051--1105, 2021.

\bibitem{BL22}
Quentin Berger and Hubert Lacoin.
\newblock The continuum directed polymer in {L}\'{e}vy noise.
\newblock {\em J. \'{E}c. polytech. Math.}, 9:213--280, 2022.

\bibitem{BT10}
Quentin Berger and Fabio Toninelli.
\newblock {On the Critical Point of the Random Walk Pinning Model in Dimension
  d=3}.
\newblock {\em Electron. J. Probab.}, 15(none):654 -- 683, 2010.

\bibitem{BBDS23}
Stein~Andreas Bethuelsen, Matthias Birkner, Andrej Depperschmidt, and Timo
  Schl\"{u}ter.
\newblock Local limit theorems for a directed random walk on the backbone of a
  supercritical oriented percolation cluster.
\newblock {\em Electron. J. Probab.}, 28:Paper No. 37, 54, 2023.

\bibitem{BR10}
Rabi~N. Bhattacharya and R.~Ranga Rao.
\newblock {\em Normal approximation and asymptotic expansions}, volume~64 of
  {\em Classics in Applied Mathematics}.
\newblock Society for Industrial and Applied Mathematics (SIAM), Philadelphia,
  PA, 2010.

\bibitem{B04}
Matthias Birkner.
\newblock A condition for weak disorder for directed polymers in random
  environment.
\newblock {\em Electron. Comm. Probab.}, 9:22--25, 2004.

\bibitem{BS10}
Matthias Birkner and Rongfeng Sun.
\newblock {Annealed vs quenched critical points for a random walk pinning
  model}.
\newblock {\em Ann. Inst. H. Poincar\'{e} Probab. Statist.}, 46(2):414 -- 441,
  2010.

\bibitem{B89}
Erwin Bolthausen.
\newblock A note on the diffusion of directed polymers in a random environment.
\newblock {\em Comm. Math. Phys.}, 123(4):529--534, 1989.

\bibitem{B70}
Aline Bonami.
\newblock \'{E}tude des coefficients de {F}ourier des fonctions de
  {$L\sp{p}(G)$}.
\newblock {\em Ann. Inst. Fourier (Grenoble)}, 20(fasc. 2):335--402 (1971),
  1970.

\bibitem{CC09}
Alain Camanes and Philippe Carmona.
\newblock The critical temperature of a directed polymer in a random
  environment.
\newblock {\em Markov Process. Related Fields}, 15(1):105--116, 2009.

\bibitem{C17}
Francis Comets.
\newblock {\em Directed polymers in random environments}, volume 2175 of {\em
  Lecture Notes in Mathematics}.
\newblock Springer, Cham, 2017.
\newblock Lecture notes from the 46th Probability Summer School held in
  Saint-Flour, 2016.

\bibitem{CC13}
Francis Comets and Michael Cranston.
\newblock Overlaps and pathwise localization in the {A}nderson polymer model.
\newblock {\em Stochastic Process. Appl.}, 123(6):2446--2471, 2013.

\bibitem{CSY03}
Francis Comets, Tokuzo Shiga, and Nobuo Yoshida.
\newblock Directed polymers in a random environment: path localization and
  strong disorder.
\newblock {\em Bernoulli}, 9(4):705--723, 2003.

\bibitem{CSY04}
Francis Comets, Tokuzo Shiga, and Nobuo Yoshida.
\newblock Probabilistic analysis of directed polymers in a random environment:
  a review.
\newblock In {\em Stochastic analysis on large scale interacting systems},
  volume~39 of {\em Adv. Stud. Pure Math.}, pages 115--142. Math. Soc. Japan,
  Tokyo, 2004.

\bibitem{CY06}
Francis Comets and Nobuo Yoshida.
\newblock Directed polymers in random environment are diffusive at weak
  disorder.
\newblock {\em Ann. Probab.}, 34(5):1746--1770, 2006.

\bibitem{CCN22}
Cl\'{e}ment Cosco, Shuta Nakajima, and Makoto Nakashima.
\newblock Law of large numbers and fluctuations in the sub-critical and {$L^2$}
  regions for {SHE} and {KPZ} equation in dimension {$d\geq3$}.
\newblock {\em Stochastic Process. Appl.}, 151:127--173, 2022.

\bibitem{DZ}
Amir Dembo and Ofer Zeitouni.
\newblock {\em Large deviations techniques and applications}, volume~38 of {\em
  Applications of Mathematics (New York)}.
\newblock Springer-Verlag, New York, second edition, 1998.

\bibitem{D09}
Frank den Hollander.
\newblock {\em Random polymers}, volume 1974 of {\em Lecture Notes in
  Mathematics}.
\newblock Springer-Verlag, Berlin, 2009.
\newblock Lectures from the 37th Probability Summer School held in Saint-Flour,
  2007.

\bibitem{durrett}
Rick Durrett.
\newblock {\em Probability: theory and examples}, volume~49.
\newblock Cambridge university press, 2019.

\bibitem{FJ20}
Ryoki Fukushima and Stefan Junk.
\newblock On large deviation rate functions for a continuous-time directed
  polymer in weak disorder.
\newblock {\em Electron. Commun. Probab.}, 26:Paper No. 8, 10, 2021.

\bibitem{FJ23}
Ryoki Fukushima and Stefan Junk.
\newblock Moment characterization of the weak disorder phase for directed
  polymers in a class of unbounded environments.
\newblock {\em Electron. Commun. Probab.}, 28:Paper No. 41, 9, 2023.

\bibitem{G22}
Simon Gabriel.
\newblock {Central limit theorems for the ($2+1$)-dimensional directed polymer
  in the weak disorder limit}.
\newblock {\em Ann. Inst. Henri Poincar\'{e} Probab. Stat.}, 59(4):1984 --
  2016, 2023.

\bibitem{J21_1}
Stefan Junk.
\newblock New {C}haracterization of the {W}eak {D}isorder {P}hase of {D}irected
  {P}olymers in {B}ounded {R}andom {E}nvironments.
\newblock {\em Comm. Math. Phys.}, 389(2):1087--1097, 2022.

\bibitem{J21_2}
Stefan Junk.
\newblock Stability of weak disorder phase for directed polymer with
  applications to limit theorems.
\newblock {\em ALEA Lat. Am. J. Probab. Math. Stat.}, 20:861--883, 2023.

\bibitem{J25}
Stefan Junk.
\newblock Fluctuations of partition functions of directed polymers in weak
  disorder beyond the {L}$^2$-phase.
\newblock {\em Ann. Probab.}, 53(2):557--596, March 2025.

\bibitem{JL24}
Stefan {Junk} and Hubert {Lacoin}.
\newblock {Strong disorder and very strong disorder are equivalent for directed
  polymers}.
\newblock {\em arXiv e-prints}, February 2024.
\newblock arXiv:2402.02562.

\bibitem{JL25_2}
Stefan Junk and Hubert Lacoin.
\newblock Coincidence of critical points for directed polymers for general
  environments and random walks, February 2025.
\newblock {a}rXiv:2502.04113.

\bibitem{JL24_2}
Stefan Junk and Hubert Lacoin.
\newblock The tail distribution of the partition function for directed polymers
  in the weak disorder phase.
\newblock {\em Comm. Math. Phys.}, 406(3), February 2025.

\bibitem{LL10}
Gregory~F. Lawler and Vlada Limic.
\newblock {\em Random walk: a modern introduction}, volume 123 of {\em
  Cambridge Studies in Advanced Mathematics}.
\newblock Cambridge University Press, Cambridge, 2010.

\bibitem{NN21}
Shuta Nakajima and Makoto Nakashima.
\newblock {Fluctuations of two-dimensional stochastic heat equation and KPZ
  equation in subcritical regime for general initial conditions}.
\newblock {\em Electron. J. Probab.}, 28(none):1 -- 38, 2023.

\bibitem{O14}
Ryan O'Donnell.
\newblock {\em Analysis of {B}oolean functions}.
\newblock Cambridge University Press, New York, 2014.

\bibitem{SS04}
Vladas Sidoravicius and Alain-Sol Sznitman.
\newblock Quenched invariance principles for walks on clusters of percolation
  or among random conductances.
\newblock {\em Probab. Theory Related Fields}, 129(2):219--244, 2004.

\bibitem{S95}
Yakov~G. Sinai.
\newblock A remark concerning random walks with random potentials.
\newblock {\em Fund. Math.}, 147(2):173--180, 1995.

\bibitem{V06}
Vincent Vargas.
\newblock A local limit theorem for directed polymers in random media: the
  continuous and the discrete case.
\newblock {\em Ann. Inst. H. Poincar\'{e} Probab. Statist.}, 42(5):521--534,
  2006.

\bibitem{V21}
Roberto Viveros.
\newblock {Directed polymer in $\gamma$-stable random environments}.
\newblock {\em Ann. Inst. H. Poincar\'{e} Probab. Statist.}, 57(2):1081 --
  1102, 2021.

\bibitem{W07}
Pawel Wolff.
\newblock Hypercontractivity of simple random variables.
\newblock {\em Studia Math.}, 180(3):219--236, 2007.

\bibitem{Y10}
Nobuo Yoshida.
\newblock Localization for linear stochastic evolutions.
\newblock {\em J. Stat. Phys.}, 138(4-5):598--618, 2010.

\bibitem{Z24}
Nikos Zygouras.
\newblock Directed polymers in a random environment: A review of the phase
  transitions.
\newblock {\em Stochastic Process. Appl.}, 177:104431, November 2024.

\end{thebibliography}

\end{document}